\definecolor{dark-red}{rgb}{0.5,0.15,0.15}
\definecolor{dark-blue}{rgb}{0.15,0.15,0.6}
\definecolor{dark-green}{rgb}{0.15,0.6,0.15}
\numberwithin{equation}{section}
\newtheorem{thmx}{Theorem}
\newtheorem{Thm}[equation]{Theorem}
\newtheorem*{Thm*}{Theorem}
\newtheorem*{MainThm*}{Main Theorem}
\newtheorem{Prop}[equation]{Proposition}
\newtheorem{Lem}[equation]{Lemma}
\newtheorem{Cor}[equation]{Corollary}
\newtheorem*{Que*}{Question}
\theoremstyle{remark}
\newtheorem{Def}[equation]{Definition}
\newtheorem{Exa}[equation]{Example}
\newtheorem{Exas}[equation]{Examples}
\newtheorem{Cons}[equation]{Construction}
\newtheorem{Hyp}[equation]{Hypothesis}
\newtheorem{Rec}[equation]{Recollection}
\newtheorem{Rem}[equation]{Remark}
\tikzset{
    labelrotatebelow/.style={anchor=north, rotate=90, inner sep=1.0mm}
}
\tikzset{
    labelrotateabove/.style={anchor=south, rotate=90, inner sep=1.0mm}
}
\newcommand{\nc}{\newcommand}
\nc{\dmo}{\DeclareMathOperator}
\renewcommand{\emptyset}{\varnothing}
\nc{\overbar}[1]{\mkern 1.5mu\overline{\mkern-1.5mu#1\mkern-1.5mu}\mkern 1.5mu}
\nc{\kappaaux}{g}
\nc{\kappaCh}{{\kappaaux(\cat C_h)}}
\nc{\kappam}{{\kappaaux({\mathfrak m})}}
\nc{\kappaP}{\Gamma_{\cat P}\unit}
\nc{\kappaQ}{{\kappaaux(\cat Q)}}
\nc{\kappaCP}{{\kappaaux_{\cat C}(\cat P)}}
\nc{\kappaDP}{{\kappaaux_{\cat D}(\cat P)}}
\nc{\kappaCQ}{{\kappaaux_{\cat C}(\cat Q)}}
\nc{\kappaDQ}{{\kappaaux_{\cat D}(\cat Q)}}
\nc{\kappaphiB}{{\kappaaux(\phi(\cat B))}}
\nc{\kappaphiQ}{{\kappaaux(\varphi(\cat Q))}}
\newcommand{\noloc}{\;\mathord{:}\,}
\dmo{\Sub}{Sub}
\dmo{\Proj}{Proj}
\dmo{\LMod}{LMod}
\dmo{\cell}{cell}
\nc{\Prst}{{\cat P}\mathrm{r^{st}}}
\nc{\Mack}[2]{\mathrm{Mack}_{#1}(#2)}
\dmo{\fin}{{fin}}
\dmo{\Sphere}{\mathbb{S}}
\dmo{\Alg}{Alg}
\dmo{\CAlg}{CAlg}
\nc{\HA}{{\rmH \hspace{-0.2em}\bbA}}
\nc{\HZ}{{\rmH \hspace{-0.2em}\bbZ}}
\nc{\HZbar}{{\rmH \hspace{-0.2em}\underline{\bbZ}}}
\nc{\Fp}{{\bbF_{\hspace{-0.1em}p}}}
\nc{\HFp}{{\rmH \hspace{-0.15em}\bbF_{\hspace{-0.1em}p}}}
\nc{\mathfrakp}{\mathfrak{p}}
\nc{\mathfrakq}{\mathfrak{q}}
\nc{\mathfrakS}{\mathfrak{S}}
\nc{\mathfrakT}{\mathfrak{T}}
\nc{\Z}{\mathbb{Z}}
\nc{\cF}{\mathcal{F}}
\nc{\hspec}[1]{\Spc^\mathrm{h}({#1})}
\dmo{\Id}{Id}
\dmo{\Loc}{Loc}
\dmo{\Spc}{Spc}
\nc{\thick}[1]{\mathrm{thick}\langle #1 \rangle}
\nc{\thickt}[1]{\mathrm{thick}_\otimes\langle #1 \rangle}
\dmo{\End}{End}
\dmo{\Mor}{Mor}
\dmo{\Hom}{Hom}
\dmo{\id}{id}
\dmo{\im}{im}
\dmo{\Ker}{Ker}
\dmo{\ind}{ind}
\dmo{\Ind}{Ind}
\dmo{\CoInd}{coind}
\dmo{\dg}{dg}
\dmo{\res}{res}
\dmo{\infl}{infl}
\dmo{\triv}{triv}
\dmo{\Tel}{Tel} 
\dmo{\grMod}{grMod}%
\dmo{\Mod}{Mod}%
\dmo{\opname}{op}
\dmo{\SH}{\mathcal{S}\mathcal{H}}
\dmo{\smallb}{b}
\dmo{\Spec}{Spec}
\dmo{\supp}{supp}
\dmo{\Supp}{Supp}
\dmo{\cosupp}{cosupp}
\dmo{\Cosupp}{Cosupp}
\dmo{\hsupp}{hsupp}
\dmo{\esupp}{esupp}
\dmo{\qsupp}{qsupp}
\dmo{\eqsupp}{eqsupp}
\dmo{\Pinj}{Pinj}
\dmo{\Inj}{Inj}
\dmo{\Arr}{Arr}
\nc{\bbL}{\mathbb{L}}
\nc{\bbA}{\mathbb{A}}
\nc{\bbE}{\mathbb{E}}
\nc{\bbN}{\mathbb{N}}
\nc{\bbQ}{\mathbb{Q}}
\nc{\bbZ}{\mathbb{Z}}
\nc{\bbF}{\mathbb{F}}
\nc{\bbS}{\mathbb{S}}
\nc{\cat}[1]{\mathscr{#1}}
\nc{\cA}{\mathcal{A}}
\nc{\cB}{\mathcal{B}}
\nc{\cC}{\mathcal{C}}
\nc{\cD}{\mathcal{D}}
\nc{\cE}{\mathcal{E}}
\nc{\cU}{\mathcal{U}}
\nc{\CB}{\mathbf{B}}
\nc{\sD}{\mathsf{D}}
\nc{\ihom}{{\underline{\hom}}}
\nc{\iHom}{\mathcal{H}\mathrm{om}}
\nc{\Mid}{\,\big|\,}
\nc{\SET}[2]{\big\{\,#1\Mid#2\,\big\}}
\nc{\unit}{\mathbb{1}}
\nc{\xra}{\xrightarrow}
\dmo{\Sp}{Sp}
\dmo{\Ho}{Ho}
\dmo{\Fin}{Fin}
\dmo{\add}{add}
\dmo{\Fun}{Fun}
\dmo{\Ext}{Ext}
\dmo{\Map}{Map}
\dmo{\Span}{Span}
\dmo{\N}{N}
\dmo{\Cat}{Cat}
\dmo{\colim}{colim}
\dmo{\hocolim}{hocolim}
\dmo{\Ch}{Ch}
\dmo{\Gr}{Gr}
\nc{\CA}{\cat A}
\nc{\CU}{\cat U}
\dmo{\Ab}{Ab}
\dmo{\Set}{Set}
\dmo{\ev}{ev}
\dmo{\Spcl}{Spcl}
\nc{\Funadd}{\Fun_{\add}}
\dmo{\proj}{proj}
\dmo{\deftensor}{Def^\otimes}
\dmo{\defcoid}{Def^{coid}}
\dmo{\dual}{dual}
\dmo{\Perf}{Perf}
\dmo{\tel}{tel}
\dmo{\rk}{rk}
\dmo{\FIdl}{\mathrm{Thick}_\otimes^{\mathrm{fg}}}
\dmo{\Idl}{\mathrm{Thick}_\otimes}
\dmo{\PIdl}{Spec}
\dmo{\Fd}{Fd}
\nc{\hatS}{\widehat{\mathcal{S}}}
\nc{\CS}{\mathcal{S}}
\dmo{\LS}{LS}
\nc{\hatLS}{\widehat{\mathrm{LS}}}
\dmo{\glo}{glo}
\dmo{\Pic}{Pic}
\dmo{\gl}{gl}
\nc{\fan}[1]{\mathscr{F}_{#1}}
\dmo{\GL}{GL}
\dmo{\Out}{Out}
\dmo{\Fan}{Fan}
\dmo{\cons}{cons}
\renewcommand{\leq}{\leqslant}
\renewcommand{\geq}{\geqslant}
\nc{\ua}{\mathord{\uparrow}}
\newcommand{\sfD}{\mathsf{D}}
\DeclareMathOperator{\fg}{fg}
\DeclareMathOperator{\op}{op}
\newcommand{\A}[1]{\mathsf{A}(#1)} 
\newcommand{\K}[1]{\mathsf{K}(#1)} 
\newcommand{\D}[1]{\mathsf{D}(#1)} 
\renewcommand{\P}[1]{\mathsf{P}(#1)} 
\newcommand{\bbm}       {\left[\begin{matrix}}
\newcommand{\ebm}       {\end{matrix}\right]}
\newcommand{\bsm}       {\left[\begin{smallmatrix}}
\newcommand{\esm}       {\end{smallmatrix}\right]}
\newcommand{\Vect}      {\operatorname{Vect}}
\newcommand{\cof}       {\mathsf{cof}}
\newcommand{\fib}       {\mathsf{fib}}
\newcommand{\acf}       {\mathsf{acf}}
\newcommand{\afb}       {\mathsf{afb}}
\newcommand{\gacf}      {\mathsf{gacf}}
\newcommand{\gcof}      {\mathsf{gcof}}
\newcommand{\we}        {\mathsf{we}}
\newcommand{\cok}       {\operatorname{cok}}
\newcommand{\img}       {\operatorname{image}}
\newcommand{\tg}        {\widetilde{g}}
\newcommand{\ti}        {\widetilde{i}}
\newcommand{\tk}        {\widetilde{k}}
\newcommand{\tn}        {\widetilde{n}}
\newcommand{\tp}        {\widetilde{p}}
\newcommand{\tq}        {\widetilde{q}}
\newcommand{\Aut}       {\operatorname{Aut}}
\newcounter{enum-resume-hack}
\Crefname{Thm}{Theorem}{Theorems}
\Crefname{Prop}{Proposition}{Propositions}
\Crefname{Lem}{Lemma}{Lemmas}
\Crefname{thmx}{Theorem}{Theorems}
\begin{document}

\title[Global representation theory: Homological foundations]{Global representation theory: \\
\smaller{Homological foundations}}

\author[Barrero]{Miguel Barrero}
\author[Barthel]{Tobias Barthel}
\author[Pol]{Luca Pol}
\author[Strickland]{Neil Strickland}
\author[Williamson]{Jordan Williamson}

\date{\today}

\makeatletter
\patchcmd{\@setaddresses}{\indent}{\noindent}{}{}
\patchcmd{\@setaddresses}{\indent}{\noindent}{}{}
\patchcmd{\@setaddresses}{\indent}{\noindent}{}{}
\patchcmd{\@setaddresses}{\indent}{\noindent}{}{}
\makeatother

\address{}
\email{}
\urladdr{}

\address{Miguel Barrero, Department of Mathematics, University of Aberdeen, Fraser Noble Building, Aberdeen AB24 3UE, UK}
\email{miguel.barrero@abdn.ac.uk}
\urladdr{https://sites.google.com/view/mbarrero}

\address{Tobias Barthel, Max Planck Institute for Mathematics, Vivatsgasse 7, 53111 Bonn, Germany}
\email{tbarthel@mpim-bonn.mpg.de}
\urladdr{https://sites.google.com/view/tobiasbarthel/}

\address{Luca Pol, Fakult{\"a}t f{\"u}r Mathematik, Universit{\"a}t Regensburg, Universit{\"a}tsstraße 31, 93053 Regensburg, Germany}
\email{luca.pol@mathematik.uni-regensburg.de}
\urladdr{https://sites.google.com/view/lucapol/}

\address{Neil P. Strickland,
 School of Mathematical and Physical Sciences,
 Hicks Building, 
 Sheffield S3 7RH, 
 UK
}
\email{N.P.Strickland@sheffield.ac.uk}
\urladdr{https://strickland1.org}

\address{Jordan Williamson, Department of Algebra, Faculty of Mathematics and Physics, \linebreak Charles University in Prague, Sokolovsk\'{a} 83, 186 75 Praha, Czech Republic}
\email{williamson@karlin.mff.cuni.cz}
\urladdr{https://jordanwilliamson1.github.io/}

\begin{abstract}
A global representation is a compatible collection of representations of the outer automorphism groups of the groups belonging to some collection of finite groups $\mathscr{U}$. Global representations assemble into an abelian category $\mathsf{A}(\mathscr{U})$, simultaneously generalising classical representation theory and the category of VI-modules appearing in the representation theory of the general linear groups. In this paper we establish homological foundations of its derived category $\mathsf{D}(\mathscr{U})$. We prove that any complex of projective global representations is DG-projective, and hence conclude that the derived category admits an explicit model as the homotopy category of projective global representations. We show that from a tensor-triangular perspective it exhibits some unusual features: for example, there are very few dualizable objects and in general many more compact objects. Under more restrictive conditions on the family $\mathscr{U}$, we then construct torsion-free classes for global representations which encode certain growth properties in $\mathscr{U}$. This lays the foundations for a detailed study of the tensor-triangular geometry of derived global representations which we pursue in forthcoming work. 
\vspace{-0.7cm}
\end{abstract}

\subjclass[2020]{18G80, 20C99, 20J05; 18A25}


\maketitle
\setcounter{tocdepth}{1}
\tableofcontents
\vspace{-0.7cm}

\section{Introduction}
This is the first in a series of papers which studies the derived categories of compatible representations of outer automorphism groups. Here we lay the homological foundations for this study, by giving an explicit model for these categories, by constructing an efficient model for each of its objects, and by characterizing their compact and dualizable objects. Moreover, under additional assumptions, we construct certain torsion-free elements that will play an important role in a sequel to this paper.

More precisely, we consider a full, replete subcategory $\cat{U}$ of the category of finite groups and conjugacy classes of surjective group homomorphisms. The category of $\cat{U}$-global representations is the abelian category of functors 
    \[
        \A{\cat{U}} \coloneqq \Fun(\cat{U}^{\op}, \Mod{k})
    \] 
for a fixed field $k$ of characteristic 0. Since the endomorphism group of an object $G$ in $\cat{U}$ is the outer automorphism group $\Out(G)$, one sees that a global representation is a collection of $\Out(G)$-representations for each $G \in \cat{U}$, together with compatibility conditions governed by the functoriality. 

The study of global representations is of interest within pure algebra, but also features prominently in other areas:
\begin{enumerate}
    \item \emph{Representation theory:} In the special case when $\cat{U}$ consists of a single group $G$, we have $\A{\cat{U}} = \Mod{k[\Out(G)]}$, the category of $\Out(G)$-representations. Since any finite group can be realised as the outer automorphism group of a finite group~\cite{MO, realizingout}, the category of global representations therefore generalises the classical representation theory of finite groups, and provides a framework for studying compatible collections of representations.
    \item \emph{Representation stability:} The category of global representations is closely related to, and generalises, categories related to representation stability appearing in the literature such as VI-modules~\cite{PutmanSam2017, Nagpal} and FI-modules \cite{FI1, FI2}. For example, if $\cat{U}$ is the collection of elementary abelian $p$-groups, then a $\cat{U}$-global representation is
    a diagram of representations of the general linear groups:
\[
\begin{tikzcd}
   X(1)      \arrow[r] \arrow[out=60,in=120,loop,distance=2em,"1"'] & 
   X(C_p)    \arrow[r, yshift=2.1mm] \arrow[r, yshift=-2.1mm,"\vdots"] \arrow[out=60,in=120,loop,distance=2em,"GL_1(\mathbb{F}_p)"'] & 
   X(C_p^2)  \arrow[r, yshift=2.5mm] \arrow[r, yshift=-2.5mm,"\vdots" {yshift=0.4mm}] \arrow[out=60,in=120,loop,distance=2em,"GL_2(\mathbb{F}_p)"'] & 
   X(C_p^3)  \arrow[r, yshift=2.9mm] \arrow[r, yshift=-2.9mm,"\vdots" {yshift=0.8mm}] \arrow[out=60,in=120,loop,distance=2em,"GL_3(\mathbb{F}_p)"'] & 
   \dotsb
  \end{tikzcd}
\]
    where the horizontal maps are induced by the projections.
    In this case, $\A{\cat{U}}$ is equivalent to the category of VI-modules by Pontryagin duality.
    \item \emph{Equivariant homotopy theory:} Global cohomology theories  consist of a compatible collection of equivariant cohomology theories, one for each group in a given family. These assemble into a category of \emph{global spectra} as introduced by Schwede~\cite{Schwedebook}. There is a deep connection between global representation theory and global homotopy theory: there is a tensor-triangulated equivalence
    \[\mathrm{Sp}_{\cat{U}\text{-}\glo}^\bbQ \simeq \D{\cat{U}}\]
    between the category of rational global spectra and the derived category $\D{\cat{U}}$ of $\A{\cat{U}}$ for any global family of finite groups $\cat{U}$~\cite{Schwedebook, Wimmerthesis, BBPSWttgeometry}. As such the results of this paper may be reinterpreted in the context of global equivariant homotopy theory. 
\end{enumerate}

In~\cite{PolStrickland2022}, Pol and Strickland undertook a detailed study of the abelian category $\A{\cat{U}}$, which exhibits various unusual properties. For example, under mild assumptions on $\cat{U}$, the projective objects form a subcategory of the injective objects. In this paper, we investigate its derived category $\D{\cat{U}}$ in more detail. This is a triangulated category, but the pointwise tensor product of representations also equips $\D{\cat{U}}$ with a compatible monoidal structure, so that it is a \emph{tensor-triangulated category}. We show that it has a rich structure and many surprising and uncommon features. 

We now turn to describing the results of this paper in more detail.

\subsection*{Projectives and modelling the derived category}
The first key result of this paper is a description of the DG-projective complexes in $\A{\cat U}$ which in turn yields a simple description of the derived category $\D{\cat{U}}$. Recall that DG-projectivity is the appropriate analogue of projectivity of modules for complexes: indeed, left derived functors on complexes are defined by taking DG-projective resolutions. Any DG-projective complex is a complex of projectives, but the converse is false in general. Our first main result (\cref{thm-cofibrant}) shows that for our categories of interest $\A{\cat{U}}$, the converse is always true which in particular makes the computation of derived functors simple.
\begin{thmx}
 Let $X$ be a chain complex of projective objects in $\A{\cat{U}}$, and let $Y$ be any acyclic complex;
 then any chain map $X\to Y$ is nullhomotopic.  In other words, every chain complex of projectives 
 is DG-projective. 
\end{thmx}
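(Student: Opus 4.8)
\emph{Setup.} The plan is to work up the filtration of $\cat U$ by the order of the groups. For $m\geq 1$ let $\cat U_{\leq m}\subseteq\cat U$ and $\cat U_{=m}\subseteq\cat U$ be the full subcategories on the groups of order at most $m$, respectively exactly $m$, and let $\res_m\colon\A{\cat U}\to\A{\cat U_{\leq m}}$ be restriction. Two features drive the argument. First, a surjective homomorphism between finite groups of equal order is an isomorphism, so $\cat U_{=m}$ is a groupoid; thus $\A{\cat U_{=m}}$ is a product of categories $\Mod{k[\Out(G)]}$, one for each order-$m$ isomorphism class in $\cat U$, and since $\mathrm{char}\,k=0$ these algebras are semisimple, so $\A{\cat U_{=m}}$ is semisimple and every chain complex in it splits as the direct sum of its homology and a contractible complex. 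Second, $\res_m$ is exact, and it sends the representable $k[\cat U(-,G)]$ to the representable $k[\cat U_{\leq m}(-,G)]$ when $|G|\leq m$ and to $0$ when $|G|>m$ (a group surjecting onto $G$ cannot be smaller than $G$); hence $\res_m$ carries projectives to projectives.

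\emph{A one-step extension lemma.} The technical heart is the following. Let $X$ be a degreewise projective complex in $\A{\cat U_{\leq m+1}}$, let $Y\in\A{\cat U_{\leq m+1}}$ be acyclic, let $f\colon X\to Y$ be a chain map, and let $\sigma$ be a chain homotopy witnessing that the restriction of $f$ to $\cat U_{\leq m}$ is nullhomotopic; then $\sigma$ extends to a nullhomotopy of $f$. To prove this, let $Y_{=m+1}$ be the subcomplex of $Y$ with $Y_{=m+1}(G)=Y(G)$ for $|G|=m+1$ and $0$ otherwise; this really is a subcomplex, because inside $\cat U_{\leq m+1}$ a surjection onto an order-$(m+1)$ group has an order-$(m+1)$ source, and $Y_{=m+1}$ is the extension by zero of the acyclic — hence contractible — complex $Y|_{\cat U_{=m+1}}$, so $Y_{=m+1}$ is itself contractible. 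Since each $X_n$ is projective, $\Hom(X_n,-)$ is exact, and together with the computation of $\res_m$ on representables this shows that $\res_m\colon\Hom^\bullet(X,Y)\to\Hom^\bullet(\res_mX,\res_mY)$ is a surjection of complexes with kernel $\Hom^\bullet(X,Y_{=m+1})$ (a natural transformation $X_n\to Y_n$ vanishing on $\cat U_{\leq m}$ is the same thing as one landing in the subobject $(Y_n)_{=m+1}$). With the convention that a degree-one cochain is a graded map of degree one whose differential is $s\mapsto d_Ys\pm sd_X$, so that a nullhomotopy of $f$ is a degree-one cochain $s$ with $ds=f$: lift $\sigma$ to a degree-one cochain $s_0$ of $\Hom^\bullet(X,Y)$; then $ds_0-f$ is a degree-zero cocycle mapping under $\res_m$ to $d\sigma-\res_mf=0$, so $ds_0-f$ lies in the acyclic (because $Y_{=m+1}$ is contractible) complex $\Hom^\bullet(X,Y_{=m+1})$; writing $ds_0-f=d\mu$ there, the cochain $s\coloneqq s_0-\mu$ is a nullhomotopy of $f$ restricting to $\sigma$.

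\emph{Assembling the homotopy.} Given a complex of projectives $X$ in $\A{\cat U}$, an acyclic complex $Y$, and a chain map $f\colon X\to Y$, note that $\res_mX$ is a complex of projectives in $\A{\cat U_{\leq m}}$ and $\res_mY$ is acyclic. Let $m_0$ be the least order of a group in $\cat U$; then $\A{\cat U_{\leq m_0}}=\A{\cat U_{=m_0}}$ is semisimple, so $\res_{m_0}f$ is nullhomotopic, and we fix such a nullhomotopy $\sigma_{m_0}$. Applying the one-step extension lemma successively — to $\res_{m+1}X$, $\res_{m+1}Y$, $\res_{m+1}f$ and $\sigma_m$ — produces nullhomotopies $\sigma_m$ of $\res_mf$ with $\sigma_m$ restricting to $\sigma_{m-1}$ on $\cat U_{\leq m-1}$ for every $m\geq m_0$. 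Since every group $G$ of $\cat U$ has finite order, the $G$-component $(\sigma_{|G|})_G$ does not depend on which $m\geq|G|$ is used, and the naturality of each $\sigma_m$ over the category $\cat U_{\leq m}$ makes these components fit together into a single natural transformation $s\colon X\to Y$ of degree one with $ds+sd=f$. Hence $f$ is nullhomotopic, which is exactly the assertion of the theorem; in particular, taking $Y=X$, every acyclic complex of projectives is contractible.

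\emph{The main obstacle.} The essential point — and the only place the characteristic-zero hypothesis enters — is the vanishing of the obstruction $ds_0-f$ in the one-step lemma, which is forced purely by the contractibility of the top layer $Y_{=m+1}$, hence by the semisimplicity of the group algebras $k[\Out(G)]$. The remainder is careful bookkeeping: that $\res_m$ is exact, preserves projectives, and acts as described on representables; that passing to the top layer sends acyclic complexes to contractible \emph{sub}complexes; and that these facts combine to make $\res_m$ a degreewise surjection of $\Hom$-complexes with exactly the stated kernel.
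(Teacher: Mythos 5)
Your argument is correct, but it proceeds along a genuinely different route from the one in the paper. The paper's proof (\cref{thm-cofibrant}) filters the \emph{source} complex $X$ internally, via the functors $L_{\leq s}$ of \cref{cons-filtration}: each subquotient $L_sX$ is $s$-pure and therefore splits as a zero-differential complex plus a contractible one (\cref{prop-pure-split}), the zero-differential piece is handled by the $\Ext$-computation of \cref{cor-ext-Ch}, and the passage from the filtration stages to $X$ itself uses the triangulated structure of $\K{\A{\cat{U}}}$ together with the telescope sequence $0\to L_{\oplus}X\xrightarrow{1-\Sigma}L_{\oplus}X\to X\to 0$ of \cref{rem-filt}. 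You instead filter the ambient \emph{category} by group order, restrict both $X$ and $Y$, and run an obstruction-theoretic induction: restriction of $\Hom$-complexes is a degreewise surjection whose kernel $\Hom(X,Y_{=m+1})$ is contractible, because the top layer of the acyclic complex $Y$ lives in the semisimple category $\A{\cat{U}_{=m+1}}$; hence a nullhomotopy over $\cat{U}_{\leq m}$ always extends over $\cat{U}_{\leq m+1}$, and the finiteness of each group lets the compatible homotopies glue. Both proofs ultimately rest on the same two structural facts---a surjection of finite groups cannot have target larger than its source, and the groupoid layers are semisimple in characteristic zero---and both need the classification of projectives in these categories as direct sums of objects $e_{G,V}$ (you use it for the surjectivity of restriction on $\Hom$-complexes and for $\res_m$ preserving projectives; the paper uses it for the purity of $L_sX$). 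Your route buys a direct, elementary construction of the nullhomotopy that avoids the telescope trick and \cref{cor-ext-Ch}; the paper's route buys reusable intermediate results (the $L_{\leq s}$ filtration, purity, the splitting of pure complexes) that are exploited again, for instance for thin complexes in \cref{sec-thin}. The one point you should make explicit is that each $\cat{U}_{\leq m}$ inherits the wide-closure hypothesis from $\cat{U}$, so that the classification of projectives may legitimately be quoted in $\A{\cat{U}_{\leq m}}$.
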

Consequently, we obtain the following concrete model for the derived category $\D{\cat{U}}$ in \cref{thm-derived-category-equal-K-proj}:
\begin{thmx}
    There is a non-unital symmetric monoidal equivalence of categories 
        \[
            \D{\cat{U}} \simeq \K{\P{\cat{U}}}
        \]
    between $\D{\cat{U}}$ and the homotopy category of projectives in $\A{\cat{U}}$. Moreover, if the tensor unit of $\A{\cat{U}}$ is projective, then this equivalence refines to a tensor-triangulated equivalence.
\end{thmx}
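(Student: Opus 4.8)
The plan is to realise the equivalence as a restriction of the canonical localisation functor $\gamma\colon \K{\A{\cat U}}\to\D{\cat U}$ to the full subcategory $\K{\P{\cat U}}$ of complexes of projectives, and then to transport the tensor structure along it. First I would prove that $\gamma|_{\K{\P{\cat U}}}$ is fully faithful. Let $P\in\K{\P{\cat U}}$; by the previous theorem $P$ is DG-projective, i.e.\ $\Hom_{\K{\A{\cat U}}}(P,A)=0$ for every acyclic $A$. Since morphisms in $\D{\cat U}$ are computed by the calculus of fractions associated to the Verdier quotient, $\Hom_{\D{\cat U}}(P,Y)$ is the colimit of $\Hom_{\K{\A{\cat U}}}(P,Y')$ over quasi-isomorphisms $Y\to Y'$; each such map fits into a triangle whose third term is acyclic, hence induces a bijection on $\Hom_{\K{\A{\cat U}}}(P,-)$, so the localisation map $\Hom_{\K{\A{\cat U}}}(P,Y)\to\Hom_{\D{\cat U}}(P,Y)$ is a bijection for every $Y$. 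Specialising to $Y\in\K{\P{\cat U}}$ gives full faithfulness.

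Next, for essential surjectivity I must produce, for an arbitrary complex $X$, a quasi-isomorphism $P\to X$ with $P$ a complex of projectives. Here I would use that $\cat U$ is essentially small, so $\A{\cat U}=\Fun(\cat U^{\op},\Mod{k})$ is a Grothendieck abelian category with enough projectives (the functors $k[\cat U(-,G)]$), and that coproducts of projectives are again projective. Writing $X$ as the filtered colimit of its brutal truncations $\sigma_{\le n}X$, each of which is bounded above and hence admits an ordinary projective resolution by a bounded-above complex of projectives, one glues these into a compatible system and forms the mapping telescope: exactness of filtered colimits in $\A{\cat U}$ shows the telescope is quasi-isomorphic to $X$, and its terms are countable coproducts of projectives, hence projective. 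This is the usual Spaltenstein construction; combined with the previous step it yields the equivalence $\D{\cat U}\simeq\K{\P{\cat U}}$ of underlying categories, with quasi-inverse given by projective resolution.

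For the monoidal statement, note that the tensor product on $\A{\cat U}$ is computed pointwise over the field $k$, hence is exact in each variable; consequently the pointwise tensor of complexes preserves quasi-isomorphisms, the localisation $\gamma$ is strong symmetric monoidal, and the tensor product on $\D{\cat U}$ coincides with the underived pointwise tensor. Transporting this structure along the equivalence makes $\D{\cat U}\simeq\K{\P{\cat U}}$ symmetric monoidal; concretely the product of $P,Q\in\K{\P{\cat U}}$ is a projective resolution of the pointwise tensor $P\otimes Q$, a resolution being unavoidable because $\P{\cat U}$ is not closed under $\otimes$. The one feature that does not survive in general is a distinguished unit: the unit of $\D{\cat U}$ is $\unit$ placed in degree $0$, which lies in $\K{\P{\cat U}}$ exactly when $\unit$ is projective, in which case $\unit\otimes P=P$ needs no resolution and the equivalence upgrades to a tensor-triangulated equivalence; in general one only obtains the non-unital statement.

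I expect the main obstacle to be the bookkeeping in the Spaltenstein step — producing the resolution of an unbounded complex as an honest complex of projectives rather than merely a DG-projective complex — together with the verification that the transported monoidal structure is coherently symmetric monoidal and that the comparison functor is strong monoidal for it. By contrast, once the DG-projectivity theorem is available, full faithfulness is essentially immediate.
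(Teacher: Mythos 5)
Your overall architecture is the same as the paper's: full faithfulness of the restricted localisation functor follows from the DG-projectivity theorem (\cref{thm-cofibrant}, packaged in the paper as \cref{cor-cofibrant-alt}), and essential surjectivity requires producing, for each complex, a quasi-isomorphic complex of projectives. Where you differ is in how that replacement is built. You invoke the Spaltenstein/B\"okstedt--Neeman telescope of resolutions of brutal truncations; the paper instead builds an explicit \emph{functorial} resolution $P(X)$ by totalising the canonical resolution $P_*(X_j)$ coming from the adjunction $l_!\dashv l^*$ for $l\colon\cat{U}^\times\to\cat{U}$ (\cref{rec-projectiveresolution,cons-P}), and checks that $P$ is $\Ch(\Ab)$-enriched, exact, and triangle-preserving. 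Your route is perfectly valid here (the needed inputs --- enough projectives, projectivity of coproducts of projectives, exactness of filtered colimits --- all hold in $\A{\cat{U}}$), but the paper's functorial $P$ is reused later for the factorisations in the projective model structure, which is what the extra effort buys.

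There is, however, one concrete error in your monoidal paragraph: you assert that a resolution of $P\otimes Q$ is ``unavoidable because $\P{\cat U}$ is not closed under $\otimes$.'' This is false for these categories: by \cref{prop-proj-closed-under-tensor}, $\P{\cat{U}}$ \emph{is} closed under tensor products (e.g.\ $e_G\otimes e_H$ decomposes as a sum of $e_K$'s for wide subgroups $K\leq G\times H$), and this closure is precisely what gives $\K{\P{\cat{U}}}$ its non-unital symmetric monoidal structure with the \emph{pointwise} tensor, making the inclusion into $\K{\A{\cat{U}}}$ strong (non-unital) symmetric monoidal with no resolutions needed (\cref{rem-sym-monoidal}). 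Your transport-of-structure argument still yields an equivalent monoidal structure, so the conclusion survives, but the reasoning misidentifies the feature of $\A{\cat{U}}$ that makes the statement clean. You do correctly locate the only genuine obstruction to unitality: $\unit$ need not lie in $\P{\cat{U}}$, and when it does (\cref{prop-minimal}) the equivalence upgrades to a tensor-triangulated one.
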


One application of the above results is the construction of a convenient replacement for complexes, which provides an efficient means of transporting information between the abelian category $\A{\cat{U}}$ and its derived category $\D{\cat{U}}$. By restricting to the subcategories $\cat{U}_{\leq s}$ of groups in $\cat{U}$ of order less than or equal to $s$, one obtains an ascending filtration of complexes 
    \[
        0 = L_{\leq 0}X \subseteq L_{\leq 1}X \subseteq L_{\leq 2}X \subseteq \ldots.
    \] 
The corresponding subquotients $L_sX = L_{\leq s}X/L_{\leq s-1}X$ are in $\D{\cat{U}_{=s}}$, where $\cat{U}_{=s}$ is the subcategory of $\cat{U}$ of groups of order precisely $s$. Since $\cat{U}_{=s}$ is a groupoid, all objects in $\A{\cat{U}_{=s}}$ are projective, so the homological algebra in the subquotients is substantially simplified. We say that a complex of projectives is \emph{thin} if the quotient complexes $L_sX$ have zero differential. We prove that any complex is quasi-isomorphic to a thin complex, and moreover, that this \emph{thin replacement} is unique up to isomorphism of complexes.  We refer the reader to \cref{sec-thin} for precise statements of the existence, and structure, of these thin replacements.

Using the properties of thin replacements, we prove the following structural result for the homology of compact objects in $\D{\cat{U}}$, see \cref{thm-compact-torsion-free-element}. This result is one key input into the study of the tensor-triangular geometry of $\D{\cat{U}}$ which we undertake in the sequel~\cite{BBPSWttgeometry}.
\begin{thmx}
    Suppose that $\cat{U}$ is multiplicative global, i.e., closed under subgroups, quotients, and finite products, and let $X \in \D{\cat{U}}$ be a non-zero, compact object. Then the homology of $X$ contains a torsion-free element $x \in H_*(X)$, in the sense that, for all maps $\alpha \in \cat{U}$, we have $\alpha^*(x) \neq 0$.
\end{thmx}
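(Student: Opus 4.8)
\emph{Proof proposal.} The plan is to replace $X$ by a thin finite model, cut off its bottom‑order part in order to isolate the ``top order'' summand, and then observe that maps to that summand automatically see torsion‑free elements.

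First I would use \cref{thm-derived-category-equal-K-proj} together with the theory of thin replacements (\cref{sec-thin}) to assume that $X$ is a thin complex of projectives $P_G = k[\cat{U}(-,G)]$; since $X$ is compact this model may be taken bounded with finitely generated terms, so only finitely many orders of groups occur among its summands. Let $t_0$ be the largest such order, so $L_{\leq t_0}X = X$. Thinness says exactly that the differential carries $L_{\leq t}X$ into $L_{\leq t-1}X$ for every $t$; hence $L_{\leq t_0-1}X$ is a subcomplex and the quotient complex $Q := L_{t_0}X = X/L_{\leq t_0-1}X$ — a sum of shifts of those $P_G$ with $|G| = t_0$ — has zero differential, and in each degree $Q_m = \bigoplus_{|G_i| = t_0} P_{G_i}$ is a direct summand of $X_m$ (the relevant short exact sequence splits as $Q_m$ is projective). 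Writing $q\colon X \to Q$ for the projection, I would run the rest of the argument on the triangle $L_{\leq t_0-1}X \to X \xrightarrow{q} Q \xrightarrow{\partial} (L_{\leq t_0-1}X)[1]$.

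The heart of the argument is to show $q_*\colon H_*(X) \to H_*(Q)$ is nonzero. Since $Q$ has zero differential, $H_*(Q) = Q_* \neq 0$, and the triangle gives $\im(q_*) = \ker(\partial_*)$, so it suffices to see $\partial_*$ is not injective. Fix any degree $m$ with $Q_m \neq 0$. As a summand of the projective $X_m$, the object $Q_m$ is projective and — crucially, and this is where the multiplicative global hypothesis enters, through the Pol--Strickland theorem that projective global representations are injective \cite{PolStrickland2022} — also injective. If $\partial_m$ were injective it would embed $Q_m$ into $H_{m-1}(L_{\leq t_0-1}X)$, which is a subquotient of $(L_{\leq t_0-1}X)_{m-1}$; a short diagram chase using that $Q_m$ is at once injective and projective then exhibits $Q_m$ as a direct summand of $(L_{\leq t_0-1}X)_{m-1}$. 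This is impossible: $Q_m$ is a nonzero sum of $P_G$'s with $|G| = t_0$, whereas $(L_{\leq t_0-1}X)_{m-1}$ is a sum of $P_{G'}$'s with $|G'| < t_0$, and $\Hom_{\A{\cat{U}}}(P_{G'}, P_G) = P_G(G') = k[\cat{U}(G',G)] = 0$ because there is no surjection $G' \twoheadrightarrow G$ when $|G'| < |G|$. Hence $\partial_*$ is not injective and $q_* \neq 0$.

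Finally I would extract the element: choose $m$, a group $G_0$, and $x \in H_m(X)(G_0)$ with $q_*(x) \neq 0$ in $H_m(Q)(G_0) = Q_m(G_0)$. For a morphism $\alpha\colon H \to G_0$ of $\cat{U}$ — a conjugacy class of surjections $H \twoheadrightarrow G_0$ — naturality gives $q_*(\alpha^* x) = \alpha^*(q_* x)$, and on each representable summand $P_G$ of $Q_m$ the transition map $\alpha^*\colon k[\cat{U}(G_0,G)] \to k[\cat{U}(H,G)]$ is injective, since precomposition with a surjection is an injective operation on conjugacy classes of surjections onto $G$. Therefore $\alpha^*(q_* x) \neq 0$, so $\alpha^* x \neq 0$, and $x$ is the desired torsion‑free element. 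The step I expect to be the genuine obstacle is precisely the non‑injectivity of $\partial_*$: everything else is bookkeeping with the order filtration, but that step forces one to invoke — and can only be carried out using — that projectives of $\A{\cat{U}}$ are injective, consistent with the fact that the statement genuinely fails for families for which this breaks down. A minor point to pin down carefully is that the thin replacement of a compact object really is bounded with finitely generated terms, so that $t_0$ and $m$ above exist; this should follow by combining the structure of thin replacements with the identification of the compact objects of $\D{\cat{U}}$ as bounded complexes of finitely generated projectives.
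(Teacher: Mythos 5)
Your proof is correct and follows essentially the same route as the paper's: reduce to a thin perfect complex via \cref{prop-perfect}, isolate the top layer $L_{t_0}X$ of the order filtration, produce a non-zero map from $H_*(X)$ to a top-order projective using that projectives are injective for multiplicative global families and that there are no non-zero maps from lower-order projectives into top-order ones, and conclude via torsion-freeness of the objects $e_{G,V}$. The only (interchangeable) difference is in how the non-zero map on homology is extracted: the paper observes that the complex $\Hom_{\A{\cat U}}(X,e_{G,S})$ has zero differential, so the projection onto a top-order summand gives a non-zero class which injectivity of $e_{G,S}$ identifies with a map $H_{-d}(X)\to e_{G,S}$, whereas you run the long exact sequence of the filtration triangle and show the connecting map $\partial_*$ is not injective.
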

The existence of such torsion-free elements captures a subtle feature of $\D{\cat{U}}$, providing a qualitative measure for the growth of the number of surjections in $\cat{U}$ that may fail for non-multiplicative families. 

\subsection*{The structure of compact and dualizable objects}
Given a triangulated category with set-indexed coproducts such as $\D{\cat{U}}$, one seeks to understand the building blocks of the category. In any triangulated category $\cat{T}$, a distinguished role is played by the subcategory $\cat{T}^c$ of \emph{compact} objects. If there is also a compatible monoidal structure so that the category is \emph{tensor-triangulated}, one may also consider the full subcategory of \emph{dualizable} objects $\cat{T}^{\dual}$. 

In nature, many large tensor-triangulated categories are compactly
generated, so that every object may be constructed from the compact
objects using triangles and sums. In fact, often even more is
true and the compact and the dualizable objects coincide, in which
case the category is said to be \emph{rigidly-compactly
 generated}. For instance, this holds for the derived categories of
commutative rings, for the stable module categories of finite groups,
and for the stable homotopy category of spectra.  It also holds for
the equivariant stable homotopy category of $G$-spectra for any single
finite group $G$, but we show that it usually fails for the derived categories of global representations that are of interest in this paper. Combining \cref{prop-comp-gen}, \cref{dualisthick1}, and \cref{groupoidrigid}, we show:

\begin{thmx}
    For any category of finite groups $\cat{U}$, the category $\D{\cat{U}}$ is compactly generated, and it is rigidly-compactly generated if and only if $\cat U$ is a finite groupoid. 
    If $\cat{U}$ contains the trivial group, we have
        \begin{equation}\label{eq:inclusion}
            \thick{\unit} = \D{\cat{U}}^{\dual} \subseteq \D{\cat{U}}^c.
        \end{equation}
\end{thmx}

In fact, we exhibit a distinguished collection of compact generators for $\cat{U}$, given by projective generators of the abelian category $\A{\cat{U}}$. Moreover, we establish various equivalent conditions for an object of $\D{\cat{U}}$ to be compact, see \cref{prop-perfect}, which build on the structure of thin replacements as described above. In particular, we deduce that $\D{\cat{U}}^c$ is closed under tensor products in $\D{\cat{U}}$ and, as such, is itself tensor-triangulated.

We note that the above results demonstrate exotic behaviour even compared to other non-rigidly compactly generated tensor-triangulated categories. For example, in the category of $K(n)$-local spectra or the derived category of $kG$-modules, every compact object is dualizable, but not conversely. In particular, the strictness of \eqref{eq:inclusion} adds a substantial new layer of complexity to the study of the tensor-triangular geometry of $\D{\cat{U}}^c$, as we discuss in the sequel. 

\subsection*{The projective model structure}
Finally, we construct a convenient model structure on $\D{\cat{U}}$, see \cref{thm-proj-model-structure,prop-cof-gen}. 

\begin{thmx}
    There is a projective model structure on $\D{\cat{U}}$ which is proper, cofibrantly generated, and monoidal. The weak equivalences are given by the quasi-isomorphisms, while the fibrations are the epimorphisms. 
\end{thmx}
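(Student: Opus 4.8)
The plan is to realise the asserted structure as the \emph{projective model structure} on the category $\Ch(\A{\cat{U}})$ of unbounded chain complexes; its homotopy category is $\D{\cat{U}}$ essentially by definition of the derived category, so ``a model structure on $\D{\cat{U}}$'' is the customary mild abuse. The point of departure is that $\A{\cat{U}}$ is a Grothendieck abelian category equipped with a \emph{set} of compact projective generators, namely the representables $P_G \coloneqq k\,\Hom_{\cat{U}}(-,G)$ for $G \in \cat{U}$, for which $\Hom_{\A{\cat{U}}}(P_G,-) \cong \ev_G$; in particular each $P_G$ is compact. Consequently $\A{\cat{U}}$ is equivalent to the category of modules over the $k$-linearisation of $\cat{U}$, and one can run the construction exactly as for chain complexes of modules over a ring: take the generating cofibrations to be the inclusions of the $(n-1)$-sphere into the $n$-disk on each generator, $\mathrm{S}^{n-1}P_G \hookrightarrow \mathrm{D}^n P_G$, and the generating acyclic cofibrations to be the maps $0 \to \mathrm{D}^n P_G$; since all these domains are compact, the small object argument supplies functorial factorisations.

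I would then verify the model axioms in the standard way. A diagram chase, using that the $P_G$ are projective generators, identifies the maps with the right lifting property against $\{0 \to \mathrm{D}^n P_G\}$ with the degreewise epimorphisms --- equivalently, since $\A{\cat{U}}$ is abelian, the epimorphisms of $\Ch(\A{\cat{U}})$ --- and those with the right lifting property against $\{\mathrm{S}^{n-1}P_G \hookrightarrow \mathrm{D}^n P_G\}$ with the epimorphisms that are in addition quasi-isomorphisms. Setting the weak equivalences to be the quasi-isomorphisms and the fibrations to be the epimorphisms, the retract and two-out-of-three axioms are formal, and the lifting axioms follow from these two identifications together with the factorisations; one obtains a cofibrantly generated model structure, whose homotopy category is $\D{\cat{U}}$ and in which every object is fibrant. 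For later use I would note that the cofibrant objects are precisely the \emph{DG-projective} complexes, which by \cref{thm-cofibrant} are the same as the complexes of projectives; the homotopy relation on these reduces to chain homotopy, so this is also compatible with the model $\D{\cat{U}} \simeq \K{\P{\cat{U}}}$ of \cref{thm-derived-category-equal-K-proj}.

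Properness I would deduce from the fact that cofibrations are degreewise-split monomorphisms. The pushout of a quasi-isomorphism along any monomorphism of complexes is a quasi-isomorphism, since the two short exact sequences in play have a common cokernel and one may invoke the long exact homology sequence and the five lemma; this gives left properness, and the dual argument with pullbacks along epimorphisms (with a common kernel) gives right properness.

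The subtle point is monoidality for the pointwise tensor product $\otimes$. The unit axiom is almost free: as $k$ is a field and $\otimes$ is computed pointwise, every object of $\A{\cat{U}}$ is flat, so the K\"unneth theorem shows that tensoring with an arbitrary complex preserves quasi-isomorphisms; hence $Q\unit \otimes X \to \unit \otimes X \cong X$ is a quasi-isomorphism for every $X$. For the pushout-product axiom it suffices to test on generating (acyclic) cofibrations. A direct computation presents the pushout-product of $\mathrm{S}^{m-1}P_G \hookrightarrow \mathrm{D}^m P_G$ with $\mathrm{S}^{n-1}P_H \hookrightarrow \mathrm{D}^n P_H$ as a degreewise-split monomorphism whose cokernel is $P_G \otimes P_H$ concentrated in a single degree; the essential input, and the step I expect to be the main obstacle, is that $\P{\cat{U}}$ is closed under $\otimes$, i.e.\ $P_G \otimes P_H$ is again projective. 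I would prove this from the explicit description of the projectives --- analysing the homomorphism $(\phi,\psi)\colon K \to G \times H$ attached to a pair of surjections out of $K$, and using that $k$ has characteristic $0$ --- after which \cref{thm-cofibrant} upgrades ``complex of projectives'' to ``cofibrant'' and makes the pushout-product a cofibration; when one of the factors is an acyclic cofibration $0 \to \mathrm{D}^n P_H$, the pushout-product is in addition a quasi-isomorphism because $\mathrm{D}^n P_H$ is contractible and tensoring with a contractible complex preserves acyclicity. Exhibiting the generating sets then gives the cofibrantly generated monoidal model structure, as recorded in \cref{prop-cof-gen}.
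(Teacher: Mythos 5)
Your argument is correct, but it takes the general cofibrantly-generated route that the paper deliberately avoids. You produce the factorisations by the small object argument applied to the generating sets $\{\Sigma^{n-1}e_G\to\Sigma^n\Delta(e_G)\}$ and $\{0\to\Delta(\Sigma^ne_G)\}$ (your spheres and disks on the representables, which are exactly the paper's $e_G$) and then run the recognition theorem; the paper instead writes down explicit, non-transfinite functorial factorisations $X\to Mf\to Y$ and $X\to Nf\to Y$ built from the functorial projective resolution $P(-)$ of \cref{cons-P}, proves the lifting axioms by identifying lifts with splittings of an extension class (\cref{prop-lift-abelian}) and quoting the Ext-vanishing of \cref{cor-no-ext}, and only afterwards establishes cofibrant generation (\cref{lem-cof-gen}, \cref{prop-cof-gen}). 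That last step, which your approach gets essentially for free, is where the paper must invoke the $L_{\leq s}$-filtration and the purity splittings to present an arbitrary monomorphism with degreewise projective cokernel as a transfinite composite of pushouts of generating cofibrations; conversely, your route only characterises the cofibrations by a lifting property, whereas the paper has the explicit description by definition. Both arguments rest on the same two non-formal inputs, which you isolate correctly: \cref{thm-cofibrant} (every complex of projectives is DG-projective), which makes the cofibrant objects precisely the complexes of projectives, and the closure of $\P{\cat{U}}$ under $\otimes$ (\cref{prop-proj-closed-under-tensor}, where the wide-closure hypothesis of \cref{hyp-basic} is what makes your analysis of $K\to G\times H$ go through), which drives the pushout-product axiom. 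Your treatment of the unit axiom via pointwise flatness and of properness via the five lemma coincides with the paper's (\cref{lem-flat} and part (g) of \cref{thm-proj-model-structure}). In short: your proof is the abstract alternative the authors themselves acknowledge at the start of \cref{sec-proj-model}; theirs trades generality for explicit factorisations.
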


While the existence of such a model structure could also be established via more abstract techniques, we view our construction as a useful proof of concept that may be of independent interest. 

\subsection*{Conventions}

Unless stated otherwise, throughout this paper we will work over a field $k$ of characteristic $0$, often omitted from the notation. We use $\cat{U}$ to denote a replete full subcategory of the category $\cat{G}$ of finite groups and conjugacy classes of surjective group homomorphisms, usually assumed to be widely closed as recorded in \cref{hyp-basic}. 

\subsection*{Acknowledgements}
MB is supported by the EPSRC grant EP/X038424/1 “Classifying spaces, proper actions and stable homotopy theory”. TB is supported by the European Research Council (ERC) under Horizon Europe (grant No.~101042990). LP is supported by the SFB 1085 Higher Invariants in Regensburg. JW is supported by the project PRIMUS/23/SCI/006 from Charles University and the Charles University Research Centre
program No. UNCE/24/SCI/022.

MB, TB, and LP are grateful to Max Planck Institute for Mathematics in Bonn for its hospitality and financial support. MB, TB, LP, and JW would also like to thank the Fondation des Treilles for its support and hospitality, where work on this paper was undertaken. The authors would also like to thank the Hausdorff Research Institute for Mathematics for its hospitality and support during the trimester program ‘Spectral Methods in Algebra, Geometry, and Topology’, funded by the Deutsche Forschungsgemeinschaft under Germany’s Excellence Strategy – EXC-2047/1 – 390685813. JW would like to thank the Isaac Newton Institute for Mathematical
Sciences for the support and hospitality during the programme `Topology, representation theory and higher 
structures' when work on this paper was undertaken. This work was supported by: EPSRC Grant Number EP/R014604/1. The authors would like to thank the Isaac Newton Institute for Mathematical Sciences, Cambridge, for support and hospitality during the programme `Equivariant homotopy theory in context', where work on this paper was undertaken. This work was supported by EPSRC grant EP/Z000580/1.

\section{Generalities in homological algebra}\label{sec-hom-alg}
In this section we introduce various categories associated to an abelian category, discuss their properties and the additional structures they canonically carry. We then study projective objects in the category of chain complexes and various features of abelian categories in which all objects are projective. 

\subsection{Graded objects, chain complexes and the derived category}
In this subsection we introduce the categories of graded objects and chain complexes in an additive category. After discussing some key properties they enjoy we introduce the homotopy category of chain complexes and the derived category of an abelian category. As these results are well-known we will not include details and instead use this as an opportunity to set some notation.

\begin{Def}\label{defn-Gr-Ch}
 For any additive category $\cat{B}$, we let $\Gr(\cat{B})$ denote the
 corresponding category of $\Z$-graded objects.  We define
 $\Sigma\colon \Gr(\cat{B})\to\Gr(\cat{B})$ by $(\Sigma X)_k=X_{k-1}$.  We write
 $\Ch(\cat{B})$ for the category of $\Z$-graded chain complexes, equipped
 with differentials $d\colon X_k\to X_{k-1}$.
\end{Def}

\begin{Rem}
There are fully faithful functors 
\[
\cat B \hookrightarrow \Gr(\cat B) \hookrightarrow \Ch(\cat B)
\]
which identify $\cat{B}$ with
$\{X\in\Gr(\cat{B})\mid X_n=0\text{ for all }n\neq 0\}$, and $\Gr(\cat{B})$
with $\{X\in\Ch(\cat{B})\mid d=0\colon X_n\to X_{n-1}\text{ for all $n$}\}$. 
\end{Rem}

In the next recollection we discuss the structure that the categories $\Gr(\cat B)$ and $\Ch(\cat B)$ inherit from $\cat B$.

\begin{Rec}\label{rec-internal_homs}
If $\cat{B}$ has a symmetric monoidal structure and coproducts, it induces
the same kind of structure on $\Gr(\cat{B})$ and $\Ch(\cat{B})$ in a
well-known way.  We will generically write $\otimes$ for the monoidal
product and $\unit$ for the unit object.

If $\cat{B}$ is a closed monoidal category, then we will write
$\iHom(X,Y)$ for the associated internal mapping objects.  Thus, for
$Y,Z\in\cat{B}$ we have $\iHom(Y,Z)\in\cat{B}$, and there are coherent natural
isomorphisms 
\[
\Hom_{\cat{B}}(X,\iHom(Y,Z))\cong\Hom_{\cat{B}}(X\otimes Y,Z) \quad\mathrm{in}\; \Ab,
\]
and
\[
\iHom(X,\iHom(Y,Z))\cong\iHom(X\otimes Y,Z)\quad \mathrm{in} \;\cat{B}.
\]
If $\cat{B}$ also has products, then there are induced internal mapping objects for
$\Gr(\cat{B})$ given by $\iHom(X,Y)_n=\prod_{i\in\Z}\iHom(X_{i-n},Y_i)$.
If $X$ and $Y$ have differentials then they induce a differential on
$\iHom(X,Y)$, and this makes $\Ch(\cat{B})$ into a closed symmetric
monoidal category as well.  On the other hand, we also have abelian
groups 
\[ \Hom(X,Y)_n=\prod_{i\in\Z}\Hom_{\cat{B}}(X_{i-n},Y_i) \cong 
    \Hom_{\cat{B}}(\unit,\iHom(X,Y)_n),
\]
which again form a chain complex. In summary, given $X,Y \in \Ch(\cat B)$, we have
\[
\Hom(X,Y) \in \Ch(\Ab) \quad \mathrm{and} \quad \iHom(X,Y)\in \Ch(\cat B).
\]
\end{Rec}

\begin{Rem}
    One verifies that the group of $0$-cycles $Z_0\Hom(X,Y)$ consists of chain maps $X\to Y$ and so can be identified with the morphism set $\Hom_{\Ch(\cat B)}(X,Y)$. The quotient group $H_0\Hom(X,Y)$ is the group of homotopy classes of chain maps.
\end{Rem}
\begin{Rem}
 If $\cat{B}$ is abelian and $Y\in\Ch(\cat{B})$ then we can define graded objects 
 $Z_*(Y)$ and $B_*(X)$ of cycles and boundaries.   Then for $X\in\Gr(\cat{B})$ we find that 
 \begin{align*}
  \Hom_{\Ch(\cat{B})}(X,Y) &\cong \Hom_{\Gr(\cat{B})}(X,Z_*(Y)) \\
  \Hom_{\Ch(\cat{B})}(Y,X) &\cong \Hom_{\Gr(\cat{B})}(Y/B_*(Y),X). 
 \end{align*}
\end{Rem}
We next introduce the homotopy category of chain complexes.
\begin{Def}\label{def-K(B)}
   For any additive category $\cat B$, we write $\K{\cat B}$ for the category whose objects are chain complexes in $\cat B$ and with hom sets given by 
   \[
   \Hom_{\K{\cat B}}(X,Y) \coloneqq H_0\Hom(X,Y)
   \]
   for all $X,Y \in \Ch(\cat B)$. If $\cat{B}$ has a closed symmetric monoidal structure and products and coproducts, then $\K{\cat{B}}$ also inherits these structures.
\end{Def}

\begin{Rem}\label{rem-triangulation}
 We will also need to use the fact that $\K{\cat{B}}$ has a canonical
 structure as a triangulated category.  For future use, we recall a
 few details of this.  For any morphism $f\colon X\to Y$ in $\Ch(\cat{B})$,
 there is a standard way to define a mapping cone $Cf\in\Ch(\cat{B})$ and
 a triangular diagram 
 \begin{equation}\label{triangle} 
 X \xrightarrow{f} Y \xrightarrow{i} Cf \xrightarrow{p} \Sigma X \xrightarrow{\Sigma f} \Sigma Y. 
 \end{equation}
 By definition, a distinguished triangle in $\K{\cat{B}}$ is a
 triangular diagram that is isomorphic to one that arises as above.
 However, there is also another standard source of distinguished
 triangles.  Suppose for simplicity that $\cat{B}$ is abelian, and that we
 have a short exact sequence $X\xrightarrow{f}Y\xrightarrow{g}Z$ in $\Ch(\cat{B})$.
 Suppose we also have maps $X\xleftarrow{u}Y\xleftarrow{v}Z$ in $\Gr(\cat{B})$ with
 $uf=1_X$ and $fu+vg=1_Y$ and $gv=1_Z$.  (If each $Z_n$ is projective,
 it will always be possible to choose such maps $u$ and $v$.)  One can
 then check that the formula $h=-udv$ gives a chain map $Z\to\Sigma X$,
 that the resulting sequence $X\xrightarrow{f}Y\xrightarrow{g}Z\xrightarrow{h}\Sigma X$ is a
 distinguished triangle, and that every distinguished triangle is
 isomorphic to one of this type.  This is proved as~\cite[tag 014L]{stacks} for example.  Therefore if $f\colon X\to Y$ is a
 monomorphism in $\Ch(\cat{B})$ with degreewise projective cokernel, then
 the cokernel is homotopy equivalent to the mapping cone. 
\end{Rem}

Finally, we introduce the derived category. 

\begin{Def}\label{def-derived-cat-abcat}
Let $\cat A$ be an abelian category. A chain map $f \colon X \to Y$ is a \emph{quasi-isomorphism} if the induced map in homology $H_*(f)\colon H_*(X) \xrightarrow{\sim} H_*(Y)$ is an isomorphism. The derived category $\D{\CA}$ of the abelian category $\CA$ is the localization of $\K{\CA}$ at the class of quasi-isomorphisms, that is 
 \[
 \D{\CA} \coloneqq \K{\CA}[\mathrm{q.iso}^{-1}].
 \]
\end{Def}
For the specific abelian category in which we are interested in this paper, we will give a more concrete description of the derived category in \cref{thm-derived-category-equal-K-proj}.

\begin{Rem}
    Recall that the derived category $\D{\CA}$ has a canonical triangulated category structure with distinguished triangles given by those triangular diagrams that are isomorphic to one of the form given in \eqref{triangle}. We also observe that if the abelian category $\cat A$ is closed symmetric monoidal, then by deriving the tensor product and internal hom functors we obtain the same structure on the derived category.
\end{Rem}
\subsection{Projective objects in chain complexes}
In this subsection we will discuss projective objects in the abelian category of chain complexes and prove some results about them which we will need in the body of the paper.

\begin{Def}\label{defn-Dl}
 Let $\cat B$ be an additive category. We define $\Delta\colon \Gr(\cat{B})\to\Ch(\cat{B})$ by 
 $\Delta(X)_n=X_n\oplus  X_{n+1}$, with differentials
 \[ d_n = \bbm 0&0\\1&0\ebm \colon  
     \Delta(X)_n = X_n\oplus  X_{n+1} \to X_{n-1}\oplus  X_n = \Delta(X)_{n-1}.
 \]
 The inclusions $X_n\to\Delta(X)_n$ give a morphism
 $i\in\Hom_{\Gr(\cat{B})}(X,\Delta(X))$ which is not a chain map.  The inclusions
 $X_{n+1}\to\Delta(X)_n$ give a chain map 
 \[
 j\in\Hom_{\Ch(\cat{B})}(\Sigma^{-1}X,\Delta(X)).
 \]
 We leave it to the reader to verify that the inclusion $i\colon X\to\Delta(X)$ is the unit map of an adjunction
\begin{equation}\label{eqn-Delta-adjunction}
 \Hom_{\Ch(\cat{B})}(\Delta(X),Y) \cong \Hom_{\Gr(\cat{B})}(X,Y).
\end{equation}
\end{Def}

The role of this construction is explained by the following result. 
\begin{Prop}\label{prop-im-Dl}
 Let $\CA$ be an abelian category, and let $\cat{P}$ be the full
 subcategory of projective objects.  Then an object $X\in\Ch(\CA)$ is
 contractible if and only if it is isomorphic to $\Delta(U)$ for some
 $U\in\Gr(\CA)$.
\end{Prop}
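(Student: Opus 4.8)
The plan is to prove both implications, where the substantive direction is showing that a contractible complex $X$ is isomorphic to some $\Delta(U)$, while the reverse is a quick check. For the reverse direction: given $U \in \Gr(\CA)$, I would exhibit an explicit contracting homotopy for $\Delta(U)$. Concretely, the degree-$(+1)$ map $s_n \colon \Delta(U)_n = U_n \oplus U_{n+1} \to U_{n+1} \oplus U_{n+2} = \Delta(U)_{n+1}$ given by the matrix $\bsm 1 & 0 \\ 0 & 0 \esm$ satisfies $ds + sd = \id$ by a direct matrix computation using the given formula for $d_n$, so $\Delta(U)$ is contractible; this also recovers the adjunction unit/counit picture in \eqref{eqn-Delta-adjunction}.

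For the forward direction, suppose $X \in \Ch(\CA)$ is contractible, so there is a degree-$(+1)$ map $s$ with $ds + sd = \id_X$. The key idea is that $s$ (or rather $ds$ and $sd$) furnishes a splitting of $X$ as a graded object into cycles and a complement, from which the $\Delta$-shape emerges. I would set $U_n = \ker(d_n) = Z_n(X)$, regarded as a graded object of $\CA$ (this uses that $\CA$ is abelian so kernels exist). Since $X$ is contractible it is in particular acyclic, so $Z_n(X) = B_n(X) = \im(d_{n+1})$, and the restriction $d_{n+1}\colon X_{n+1} \to Z_n(X)$ is an epimorphism. The homotopy $s$ provides a section: one checks that $d_{n+1} s_n \colon Z_n(X) \to Z_n(X)$ is the identity on cycles (because on a cycle $z$, $z = d s z + s d z = d s z$). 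Hence $s_n$ restricted to $Z_n(X)$ splits the epimorphism $d_{n+1}$, giving $X_{n+1} \cong Z_{n+1}(X) \oplus Z_n(X) = U_{n+1} \oplus U_n$, i.e. $X_n \cong U_n \oplus U_{n-1}$ naturally — reindexing, $X \cong \Delta(U)$ as graded objects.

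It then remains to verify that under this isomorphism the differential of $X$ corresponds to the differential of $\Delta(U)$ (up to perhaps adjusting signs or the splitting). Tracking through, the differential $d_n\colon X_n \to X_{n-1}$ kills the $Z_n$-summand and maps the complementary summand $Z_{n-1}(X)$ (sitting inside $X_n$ via the chosen section $s_{n-1}$) isomorphically onto the $Z_{n-1}$-summand of $X_{n-1}$; this is exactly the matrix $\bsm 0 & 0 \\ 1 & 0 \esm$ defining $\Delta$. One should double-check that the section maps are compatible across degrees so that the resulting identification is an isomorphism of \emph{complexes}, not merely a degreewise one — but since we only need an abstract isomorphism $X \cong \Delta(U)$ and we have complete freedom in choosing $U = Z_*(X)$, this should go through cleanly; if the naive splitting does not immediately give a chain isomorphism, replacing $s$ by $sds$ (a standard trick to make the homotopy satisfy $s^2 = 0$ and $sds = s$) rigidifies the splitting.

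I expect the main obstacle to be purely bookkeeping: ensuring the degreewise splittings assemble into an honest isomorphism of chain complexes with the differential in exactly the matrix form prescribed in \cref{defn-Dl}. The existence of the splitting itself is immediate from the contracting homotopy, so no deep input is needed — the care is all in organizing the matrices and indices.
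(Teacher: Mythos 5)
Your proof follows essentially the same route as the paper: take $U$ to be the graded object of cycles (up to the shift you note) and use the contracting homotopy $s$ to split each $X_n$ as $Z_n X\oplus s(Z_{n-1}X)$, matching the differential of $\Delta$. Two small points. First, your contracting homotopy for $\Delta(U)$ is mis-written: the matrix $\bsm 1&0\\0&0\esm$ would require an identity map $X_n\to X_{n+1}$ between different objects; the correct homotopy is $s_n=\bsm 0&1\\0&0\esm\colon X_n\oplus X_{n+1}\to X_{n+1}\oplus X_{n+2}$, for which $ds+sd=1$ is immediate. Second, the compatibility you flag as the "main obstacle" does go through with the unmodified $s$ and needs no $sds$ trick: the maps $p=\bsm s&1\esm\colon Z_{n-1}X\oplus Z_nX\to X_n$ and $i=\bsm d\\ ds\esm\colon X_n\to Z_{n-1}X\oplus Z_nX$ are mutually inverse chain maps, the verification using only $ds+sd=1$, the identity $ds^2=s^2d$, and the vanishing of $d$ on cycles — which is exactly how the paper concludes.
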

\begin{proof}
 First, we can define 
 \[ s_n = \bbm 0&1\\0&0\ebm \colon  
    \Delta(X)_n = X_n\oplus  X_{n+1} \to X_{n+1}\oplus  X_{n+2} = \Delta(X)_{n+1}.
 \]
 It is then easy to see that $ds+sd=1$ so $\Delta(X)$ is contractible.
 Conversely, suppose that $X\in\Ch(\CA)$ is contractible, and choose a
 contraction, i.e., a system of maps $s_n\colon X_n\to X_{n+1}$ with
 $ds+sd=1$.  Put $U_n=Z_{n-1}X=\ker(d\colon X_{n-1}\to X_{n-2})$, and
 define  
 \[ 
    p_n = \bbm s_{n-1} & 1 \ebm \colon  (\Delta U)_n = 
     Z_{n-1}X\oplus  Z_nX \to X_n.
 \]
 This satisfies 
 \[ pd-dp = \bbm s & 1\ebm \bbm 0&0\\1&0 \ebm - d\bbm s & 1\ebm 
     = \bbm 1 - ds & -d \ebm = \bbm sd & -d \ebm.
 \]
 We are considering this as a map defined on $Z_{n-1}X\oplus  Z_nX$ so
 the relevant differentials in the last expression are zero and we get
 $pd-dp=0$.  This shows that $p$ is a chain map.  Next, we define
 \[ i_n = \bbm d\\ ds\ebm\colon X_n\to Z_{n-1}X\oplus  Z_nX =(\Delta U)_n. \]
 The relation $ds+sd=1$ gives $pi=1\colon X\to X$.  Next, note that 
 \[ ds^2 = (1-sd)s = s(1-ds) = s^2d. \]
 Using this, we get 
 \[ ip = \bbm d\\ ds\ebm \bbm s & 1\ebm = 
     \bbm ds & ds^2 \\ d & ds \ebm = 
     \bbm 1-sd & s^2d \\ d & 1-sd \ebm.
 \]
 Again we are considering this as a map defined on
 $Z_{n-1}X\oplus  Z_nX$ so the relevant differentials are zero and we
 get that $ip$ is the identity.  Thus, $X$ is in the essential image of
 $\Delta$ as claimed. 
\end{proof}

From this we obtain two corollaries which we will need later in the paper.

\begin{Cor}\label{cor-contr-proj}
 Let $\CA$ be an abelian category, let $\cat{P}$ be the full
 subcategory of projective objects, and let $X$ be an object of $\Ch(\cat{P})$.
 Then the following are equivalent:
 \begin{itemize}
  \item[(a)] $X$ is contractible;
  \item[(b)] $X$ is isomorphic to $\Delta(U)$ for some $U\in\Gr(\cat{P})$;
  \item[(c)] $X$ is projective in $\Ch(\cat{A})$.
 \end{itemize}
\end{Cor}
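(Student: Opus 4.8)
The plan is to establish the cycle of implications (a) $\Rightarrow$ (b) $\Rightarrow$ (c) $\Rightarrow$ (a). The implication (a) $\Rightarrow$ (b) is essentially already contained in \cref{prop-im-Dl}: if $X \in \Ch(\cat P)$ is contractible, that proposition produces an isomorphism $X \cong \Delta(U)$ with $U_n = Z_{n-1}X$. The only extra point to check is that each $U_n$ is in fact projective, not merely an object of $\CA$. This follows because $X$ is degreewise projective and contractible: a contraction exhibits $X_{n-1}$ as $Z_{n-1}X \oplus B_{n-2}X$ (via $s$ and $d$), so $Z_{n-1}X$ is a direct summand of the projective $X_{n-1}$, hence projective. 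Thus $U \in \Gr(\cat P)$.

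For (b) $\Rightarrow$ (c), I would show directly that $\Delta(U)$ is projective in $\Ch(\CA)$ whenever $U \in \Gr(\cat P)$. The adjunction \eqref{eqn-Delta-adjunction} says $\Hom_{\Ch(\CA)}(\Delta(U), Y) \cong \Hom_{\Gr(\CA)}(U, Y)$, naturally in $Y$. A surjection $Y \twoheadrightarrow Y'$ in $\Ch(\CA)$ is in particular a degreewise surjection, hence a surjection in $\Gr(\CA)$; since each $U_n$ is projective in $\CA$, the map $\Hom_{\Gr(\CA)}(U, Y) \to \Hom_{\Gr(\CA)}(U, Y')$ is surjective. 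Transporting along the adjunction isomorphism, $\Hom_{\Ch(\CA)}(\Delta(U), Y) \to \Hom_{\Ch(\CA)}(\Delta(U), Y')$ is surjective, so $\Delta(U)$ is projective in $\Ch(\CA)$. (One should note that epimorphisms in $\Ch(\CA)$ are exactly the degreewise epimorphisms, since kernels and cokernels in $\Ch(\CA)$ are computed degreewise.)

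For (c) $\Rightarrow$ (a), suppose $X \in \Ch(\cat P)$ is projective in $\Ch(\CA)$. The standard trick is to find a surjection from a contractible complex onto $X$ and split it. Concretely, consider the counit-type map $\varepsilon \colon \Delta(X) \to X$ adjoint to the identity of $X$ in $\Gr(\CA)$ under \eqref{eqn-Delta-adjunction}; explicitly on degree $n$ it is $\bbm d & 1 \ebm \colon X_n \oplus X_{n+1} \to X_n$, which is visibly a degreewise split surjection, hence an epimorphism in $\Ch(\CA)$. Since $X$ is projective in $\Ch(\CA)$, this epimorphism splits: $X$ is a retract of $\Delta(X)$ in $\Ch(\CA)$. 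As $\Delta(X)$ is contractible (the map $ds+sd=1$ witnesses this, as in the proof of \cref{prop-im-Dl}) and contractibility is preserved under retracts, $X$ is contractible. This completes the cycle.

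The only mildly delicate point is the verification in (a) $\Rightarrow$ (b) that the cycle objects $Z_{n-1}X$ are projective; everything else is a formal consequence of the adjunction \eqref{eqn-Delta-adjunction}, the degreewise computation of (co)kernels in $\Ch(\CA)$, and the already-proved \cref{prop-im-Dl}. I do not anticipate a genuine obstacle here.
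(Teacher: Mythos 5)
Your proposal is correct and follows essentially the same route as the paper: (a)$\Rightarrow$(b) via \cref{prop-im-Dl} plus the observation that the cycle objects are direct summands of the degreewise projective terms, (b)$\Rightarrow$(c) via the adjunction \eqref{eqn-Delta-adjunction}, and (c)$\Rightarrow$(a) by splitting the natural epimorphism $\Delta(X)\to X$ and using that retracts of contractibles are contractible. One cosmetic slip: the counit in degree $n$ should be $\bbm 1 & d\ebm\colon X_n\oplus X_{n+1}\to X_n$ (identity on $X_n$, differential on $X_{n+1}$), not $\bbm d & 1\ebm$, but this does not affect the argument.
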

\begin{proof}
 If~(a) holds then \cref{prop-im-Dl} tells us that $X\cong\Delta(U)$ for some
 $U\in\Gr(\cat{A})$.  This implies that each $U_n$ is a retract of $X_n$ and so lies in $\cat{P}$, so $U\in\cat{P}$, proving~(b).  If~(b) holds then the functor $\Hom_{\Ch(\cat{A})}(X,-)$ is equivalent to $\Hom_{\Gr(\cat{A})}(U,-)$, and this is clearly exact, proving~(c).  If~(c) holds then the natural epimorphism $\Delta(X)\to X$ must split, so $X$ is a retract of the contractible complex $\Delta(X)$ and so is itself contractible, proving~(a).
\end{proof}

\begin{Cor}\label{cor-ext-Ch}
 Let $\CA$ be an abelian category, and let $\cat{P}$ be the full
 subcategory of projective objects. Suppose that $X\in\Ch(\cat{P})$ and $Y\in\Ch(\cat{A})$. We then have
 \[ \Ext^t_{\Ch(\CA)}(X,Y) \cong \begin{cases}
      \Hom_{\Ch(\CA)}(X,Y) & \text{ if } t = 0 \\
      \Hom_{\K{\CA}}(\Sigma^{-t}X,Y) & \text{ if } t > 0.     
     \end{cases}
 \]
 In particular, if the differential in $X$ is zero and $t>0$ we have
 \[ \Ext^t_{\Ch(\CA)}(X,Y) \cong \prod_i \Hom_{\CA}(X_{i+t},H_i(Y)).
 \]  
\end{Cor}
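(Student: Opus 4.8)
The $t=0$ clause is just the tautology $\Ext^0=\Hom$, so the real content is the formula for $t>0$, which I would read off from an explicit projective resolution of $X$ in the abelian category $\Ch(\CA)$. The building block is the functor $\Delta$ of \cref{defn-Dl}: the counit of the adjunction \eqref{eqn-Delta-adjunction}, evaluated at the complex $X$ (applying $\Delta$ to its underlying graded object), is an epimorphism $\varepsilon\colon\Delta(X)\to X$ in $\Ch(\CA)$ — explicitly $\varepsilon_n(a,b)=a+db$ on $\Delta(X)_n=X_n\oplus X_{n+1}$, which is split epi in $\Gr(\CA)$ — and its kernel is isomorphic to $\Sigma^{-1}X$. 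Since the underlying graded object of $X\in\Ch(\cat P)$ lies in $\Gr(\cat P)$, the complex $\Delta(X)$ is projective in $\Ch(\CA)$ by \cref{cor-contr-proj}; as $\Sigma^{-1}X$ is again in $\Ch(\cat P)$ and $\Delta$ commutes with $\Sigma$, iterating and splicing produces a projective resolution
\[
 \cdots\longrightarrow\Sigma^{-2}\Delta(X)\longrightarrow\Sigma^{-1}\Delta(X)\longrightarrow\Delta(X)\longrightarrow X\longrightarrow 0
\]
with $P_t=\Sigma^{-t}\Delta(X)$ and $d_t=(\Sigma^{-(t-1)}\iota)\circ(\Sigma^{-t}\varepsilon)$, where $\iota\colon\Sigma^{-1}X\hookrightarrow\Delta(X)$ is the kernel inclusion of $\varepsilon$.

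Applying $\Hom_{\Ch(\CA)}(-,Y)$, the adjunction \eqref{eqn-Delta-adjunction} identifies $\Hom_{\Ch(\CA)}(P_t,Y)\cong\Hom_{\Gr(\CA)}(\Sigma^{-t}X,Y)=\textstyle\prod_i\Hom_\CA(X_{i+t},Y_i)$, which is exactly the degree-$(-t)$ part of the chain complex $Q\coloneqq\Hom(X,Y)$ of \cref{rec-internal_homs}. I would then check, by tracing $d_t$ through these isomorphisms, that the cochain complex $\Hom_{\Ch(\CA)}(P_\bullet,Y)$ is — up to a sign in each degree — the brutally truncated complex $Q_0\to Q_{-1}\to Q_{-2}\to\cdots$ placed in cohomological degrees $0,1,2,\dots$. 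Reading off cohomology gives $\Ext^0_{\Ch(\CA)}(X,Y)=Z_0Q=\Hom_{\Ch(\CA)}(X,Y)$ (consistent with $\Ext^0=\Hom$), while for $t\geq 1$ the truncation is irrelevant and $\Ext^t_{\Ch(\CA)}(X,Y)=H_{-t}(Q)=H_{-t}\Hom(X,Y)$; since $\Hom(\Sigma^{-t}X,Y)_n=\Hom(X,Y)_{n-t}$, this equals $H_0\Hom(\Sigma^{-t}X,Y)=\Hom_{\K{\CA}}(\Sigma^{-t}X,Y)$ by \cref{def-K(B)}, as required.

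For the last assertion, suppose $d_X=0$. Then the differential on $\Hom(\Sigma^{-t}X,Y)$ is postcomposition with $d_Y$, and the complex splits as a product $\textstyle\prod_i\Hom_\CA(X_i,\Sigma^{t-i}Y)$ of complexes, each factor being the functor $\Hom_\CA(X_i,-)$ applied degreewise to a shift of $Y$. Each $X_i$ is projective, so $\Hom_\CA(X_i,-)$ is exact, hence commutes with homology; since homology also commutes with products, $H_0\Hom(\Sigma^{-t}X,Y)=\textstyle\prod_i\Hom_\CA(X_i,H_{i-t}(Y))=\prod_i\Hom_\CA(X_{i+t},H_i(Y))$ after reindexing, which together with the previous paragraph is the stated formula.

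The one genuinely delicate step is the identification in the second paragraph: verifying that $d_t^\ast$ agrees, up to signs that do not affect cohomology, with the differential of the $\Hom$-complex $Q$ is a routine but error-prone index- and sign-chase through \eqref{eqn-Delta-adjunction} and the formulas for $\varepsilon$ and $\iota$. One could instead avoid writing down the resolution by dimension-shifting along the short exact sequences $0\to\Sigma^{-1}X\to\Delta(X)\to X\to 0$ (whose middle term is projective, by \cref{cor-contr-proj}) together with the standard identification of $\Ext^1_{\Ch(\CA)}(X,-)$ with degreewise-split extensions of $X$, hence with $\Hom_{\K{\CA}}(\Sigma^{-1}X,-)$; but this merely relocates the same bookkeeping into the proof of that identification.
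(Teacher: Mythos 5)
Your proposal is correct and takes essentially the same route as the paper: splice the short exact sequences $\Sigma^{-1}X\to\Delta(X)\to X$ (whose middle terms are projective in $\Ch(\CA)$ by \cref{cor-contr-proj}) into a projective resolution with $t$-th term $\Sigma^{-t}\Delta(X)$, apply $\Hom_{\Ch(\CA)}(-,Y)$, and identify the resulting cochain complex with the brutal truncation of the $\Hom$-complex via the adjunction \eqref{eqn-Delta-adjunction}. Your last paragraph supplies the verification of the final displayed formula that the paper leaves implicit, and it is correct.
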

\begin{proof}
 We have evident short exact sequences 
 $\Sigma^{-t-1}X\to \Delta\Sigma^{-t}X\to\Sigma^{-t}X$, in which the middle term is a projective object of $\Ch(\cat{A})$.  We can splice these to get a projective resolution of $X$, whose $t$-th term is $\Delta\Sigma^{-t}X$.  From this we get a complex calculating $\Ext_{\Ch(\CA)}^*(X,Y)$, whose $t$-th term is $\Hom_{\Gr(\CA)}(\Sigma^{-t}X,Y)$, which is the same as $\Hom(X,Y)_{-t}$.  One can check that the differential is just the usual one, and the claim follows.
\end{proof}

\subsection{Special abelian categories}
In this subsection we consider abelian categories in which every object is projective and show that in this case quasi-isomorphisms agree with homotopy equivalences. The prototypical example of an abelian category satisfying the above assumption is the category of vector spaces over a field, but this also happens in our context of global representations when the chosen family is a groupoid, see \cref{ex-unit-projective}.

\begin{Lem}\label{lem-semisimple-complex}
 Let $\CA$ be an abelian category  in which every
 object is projective.  If $X\in\Ch(\CA)$ then there is an isomorphism $X\cong X'\oplus  X''$
 where $X'$ has zero differential and $X''$ is contractible.  Thus,
 the inclusion $X'\to X$ and the projection $X\to X'$ are both chain
 homotopy equivalences, and they induce an isomorphism
 $X'\cong H_*(X)$.
\end{Lem}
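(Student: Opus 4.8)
The plan is to exploit the fact that in such an abelian category $\CA$, every short exact sequence splits (since every object, in particular every subobject, is projective and every quotient is projective so the surjection onto it splits). Given $X\in\Ch(\CA)$, write $Z_n=Z_n(X)$ for the cycles and $B_n=B_n(X)$ for the boundaries. The key structural input is to produce a degreewise splitting of the complex. First I would observe that the inclusion $B_{n-1}\hookrightarrow X_n/Z_n \cong \im(d_n)$ is an isomorphism onto a subobject of $X_{n-1}$, and since $B_{n-1}$ is projective, the surjection $d_n\colon X_n\twoheadrightarrow B_{n-1}$ admits a section $\sigma_n\colon B_{n-1}\to X_n$. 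Likewise the inclusion $Z_n\hookrightarrow X_n$ admits a retraction, so one obtains a direct sum decomposition $X_n\cong Z_n\oplus \sigma_n(B_{n-1})\cong Z_n\oplus B_{n-1}$, compatible with $d$ in the sense that $d_n$ is the projection $X_n\to B_{n-1}$ followed by the inclusion $B_{n-1}\hookrightarrow Z_{n-1}\subseteq X_{n-1}$.

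Next, within $Z_n$ we have the subobject $B_n$, which is again projective, so the inclusion $B_n\hookrightarrow Z_n$ splits: $Z_n\cong H_n\oplus B_n$ for some complement $H_n$ (which will be a model for homology). Assembling these, $X_n\cong H_n\oplus B_n\oplus B_{n-1}$. Now define $X'$ to be the subcomplex with $(X')_n=H_n$ and zero differential — this is legitimate because $d$ kills $H_n\subseteq Z_n$ — and define $X''$ to have $(X'')_n=B_n\oplus B_{n-1}$ with differential the obvious shift (identity from the $B_{n-1}$-summand of $X''_n$ onto the $B_{n-1}$-summand of $X''_{n-1}$, zero elsewhere). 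Then $X\cong X'\oplus X''$ as chain complexes, $X'$ has zero differential, and $X''$ is exactly of the form $\Delta(U)$ for $U\in\Gr(\CA)$ with $U_n=B_{n-1}$, hence contractible by \cref{prop-im-Dl}.

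The remaining assertions are then formal: a contractible complex is chain homotopy equivalent to $0$, so the split inclusion $X'\hookrightarrow X$ and projection $X\twoheadrightarrow X'$ are mutually inverse up to homotopy, i.e. chain homotopy equivalences; and since $X'$ has zero differential, $H_*(X')=X'$, giving the isomorphism $X'\cong H_*(X)$ (which one checks is the one induced by the inclusion).

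The main obstacle is purely bookkeeping: one must choose the various splittings $\sigma_n$ and the complements $H_n$ coherently enough that the resulting maps genuinely assemble into chain maps and an honest isomorphism of complexes, rather than merely a degreewise isomorphism. This is where a little care is needed — in particular verifying that the copy of $B_{n-1}$ appearing as a summand of $X_n$ maps isomorphically onto the copy of $B_{n-1}$ sitting inside $X_{n-1}$ under $d_n$, so that the $X''$ part really is a $\Delta$-complex. No genuine difficulty arises beyond this; the heavy lifting is done by the splitting of all short exact sequences together with \cref{prop-im-Dl}.
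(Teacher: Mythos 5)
Your proposal is correct and follows essentially the same route as the paper: split $d_n\colon X_n\twoheadrightarrow B_{n-1}$ to get $X_n\cong Z_n\oplus T_n$ with $T_n\cong B_{n-1}$, split $B_n\hookrightarrow Z_n$ to extract the homology summand, and observe that the boundary part is a subcomplex of the form $\Delta(U)$, hence contractible. The coherence issue you flag resolves itself exactly as you indicate, since $d$ kills $Z_n$ and carries the chosen complement isomorphically onto $B_{n-1}\subseteq Z_{n-1}$, so no further care is needed.
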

\begin{proof}
 As the subobject $B_{k-1}X=\img(d_k\colon X_k\to X_{k-1})$ is projective,
 the epimorphism $d_k\colon X_k\to B_{k-1}X$ must split.  We can therefore
 choose $T_k\leq X_k$ such that $d_k$ induces an isomorphism
 $T_k\to B_{k-1}X$, and this implies that $X_k\cong Z_kX\oplus  T_k$.
 Similarly, the quotient map $Z_kX\to H_kX=(Z_kX)/(B_kX)$ must split,
 so we can choose $X'_k\leq Z_kX$ such that
 $Z_kX\cong X'_k\oplus  B_kX \cong X'_k\oplus  d(T_{k+1})$.  Thus, if we put
 $X''_k\cong T_k\oplus  B_kX \cong T_k\oplus  d(T_{k+1})$ then
 $X_k \cong X'_k\oplus  X''_k$.  It is clear by construction that $d=0$ on
 $X'$ and that $X''$ is a contractible subcomplex.
\end{proof}

\begin{Prop}\label{prop-semisimple-complex}
 Let $\CA$ be an abelian category in which every object is projective.
 Then every acyclic object of $\Ch(\CA)$ is contractible, and every
 quasi-isomorphism in $\Ch(\CA)$ is a homotopy equivalence. 
\end{Prop}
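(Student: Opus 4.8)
The plan is to derive both statements directly from the structural decomposition in \cref{lem-semisimple-complex}. For the first claim, suppose $X\in\Ch(\CA)$ is acyclic. By \cref{lem-semisimple-complex} we may write $X\cong X'\oplus X''$ with $X'$ of zero differential, $X''$ contractible, and $X'\cong H_*(X)$. Since $X$ is acyclic, $H_*(X)=0$, so $X'=0$ and hence $X\cong X''$ is contractible.

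For the second claim, let $f\colon X\to Y$ be a quasi-isomorphism in $\Ch(\CA)$. Form the mapping cone $Cf$, which sits in the triangle $X\xrightarrow{f}Y\xrightarrow{i}Cf\xrightarrow{p}\Sigma X$ of \eqref{triangle}; since $Cf_n=Y_n\oplus X_{n-1}$, there is a degreewise-split short exact sequence $Y\rightarrowtail Cf\twoheadrightarrow\Sigma X$ in $\Ch(\CA)$, whose associated long exact sequence in homology identifies (up to sign) the connecting map with $H_*(f)$. As $f$ is a quasi-isomorphism, $H_*(f)$ is an isomorphism in every degree, so exactness forces $H_*(Cf)=0$. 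Thus $Cf$ is acyclic, hence contractible by the first part, i.e.\ $Cf\cong 0$ in $\K{\CA}$. Applying the cohomological functor $\Hom_{\K{\CA}}(W,-)$ to the triangle and using $\Hom_{\K{\CA}}(W,Cf)=0=\Hom_{\K{\CA}}(W,\Sigma^{-1}Cf)$, we conclude that $f_*\colon\Hom_{\K{\CA}}(W,X)\to\Hom_{\K{\CA}}(W,Y)$ is a bijection for all $W$, so $f$ is an isomorphism in $\K{\CA}$ by Yoneda — that is, a homotopy equivalence.

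I do not expect a serious obstacle here: once \cref{lem-semisimple-complex} is available, the proposition is essentially formal, the only point deserving a line of care being the passage from ``$f$ is a quasi-isomorphism'' to ``$Cf$ is contractible''. An alternative route avoids the mapping cone entirely: writing $X\cong X'\oplus X''$ and $Y\cong Y'\oplus Y''$ as in \cref{lem-semisimple-complex}, the homotopy equivalences $X\simeq X'$ and $Y\simeq Y'$ present $f$, up to homotopy, by a graded homomorphism $g\colon X'\to Y'$ between complexes with zero differential; any two homotopic such maps coincide (a homotopy $s$ satisfies $ds+sd=0$ when $d=0$), and under the identifications $H_*(X')=X'$ and $H_*(Y')=Y'$ the map $g$ is precisely $H_*(f)$, hence an isomorphism. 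Either way $f$ becomes invertible in $\K{\CA}$.
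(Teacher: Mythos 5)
Your proof is correct. The first claim is handled exactly as in the paper: apply \cref{lem-semisimple-complex} and note $X'\cong H_*(X)=0$. For the second claim, your primary argument takes a genuinely different route. The paper never forms the mapping cone: it splits both $X\cong X'\oplus X''$ and $Y\cong Y'\oplus Y''$ via \cref{lem-semisimple-complex}, observes that the composite $X'\hookrightarrow X\xrightarrow{f}Y\twoheadrightarrow Y'$ is a quasi-isomorphism between zero-differential complexes and hence an isomorphism in $\Ch(\CA)$, and concludes by two-out-of-three in $\K{\CA}$ since the inclusions and projections are homotopy equivalences. That is precisely the ``alternative route'' you sketch at the end, so you have in effect given both proofs. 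Your mapping-cone argument is sound as written: the degreewise split short exact sequence $Y\rightarrowtail Cf\twoheadrightarrow\Sigma X$ has connecting map $\pm H_*(f)$, so $Cf$ is acyclic, hence contractible by the first part, and the triangulated structure of $\K{\CA}$ from \cref{rem-triangulation} plus Yoneda finishes the job. The trade-off is that your route leans on the triangulated structure of $\K{\CA}$ and the identification of the connecting homomorphism, whereas the paper's is entirely elementary and self-contained given the lemma; on the other hand, your argument isolates the genuinely reusable fact (``quasi-isomorphism with contractible cone is a homotopy equivalence'') from the special hypothesis on $\CA$, which only enters in showing the cone is contractible.
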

\begin{proof}
 Let $X\in\Ch(\CA)$ be acyclic, so $H_*(X)=0$. \cref{lem-semisimple-complex} gives a
 splitting $X\cong X'\oplus  X''$ where $X''$ is contractible and
 $X'\cong H_*(X)=0$, so $X$ is itself contractible as claimed.

 Now let $f\colon X\to Y$ be a quasi-isomorphism in $\Ch(\CA)$.  We can
 split $X$ as $X'\oplus  X''$ and $Y$ as $Y'\oplus  Y''$ as in \cref{lem-semisimple-complex}.  The
 composite 
 \[ g = (X'\xrightarrow{\text{inc}} X \xrightarrow{f} Y \xrightarrow{\text{proj}} Y') \]
 is then a quasi-isomorphism between complexes with zero differential,
 so it is an isomorphism in $\Ch(\CA)$.  We have also seen that the
 maps $\text{inc}$ and $\text{proj}$ are homotopy equivalences,
 so they become isomorphisms in $\K{\CA}$.  It follows that $f$ also
 becomes an isomorphism in $\K{\CA}$, so it is a homotopy
 equivalence as claimed.
\end{proof}

\begin{Cor}\label{cor-homology-equivalence-graded}
Let $\CA$ be an abelian category in which every object is projective. Then the homology functors define an equivalence
 \[
 H_* \colon \D{\CA}\xrightarrow{\sim } \Gr(\CA).
 \]
\end{Cor}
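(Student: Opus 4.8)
The plan is to realize $H_*$ as a quasi-inverse to the evident inclusion. Recall from the Remark after \cref{defn-Gr-Ch} that $\Gr(\CA)$ sits inside $\Ch(\CA)$ as the complexes with zero differential; composing with the canonical functor $\Ch(\CA)\to\K{\CA}\to\D{\CA}$ gives a functor $\iota\colon\Gr(\CA)\to\D{\CA}$. I would show that $\iota$ is an equivalence and that $H_*$ is a quasi-inverse of it, which is exactly the asserted statement.

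First I would reduce to the homotopy category: by \cref{prop-semisimple-complex} every quasi-isomorphism in $\Ch(\CA)$ is already a homotopy equivalence, so the localization functor $\K{\CA}\to\D{\CA}$ is an equivalence and it suffices to prove that $\iota\colon\Gr(\CA)\to\K{\CA}$ is an equivalence. Essential surjectivity is immediate from \cref{lem-semisimple-complex}: any $X\in\Ch(\CA)$ decomposes as $X'\oplus X''$ with $X'$ of zero differential and $X''$ contractible, so $X\cong X'=\iota(X')$ in $\K{\CA}$. For full faithfulness, the key (and only) computation is that if $X',Y'\in\Gr(\CA)\subseteq\Ch(\CA)$ carry zero differentials, then any chain homotopy $s$ between chain maps $f,g\colon X'\to Y'$ satisfies $f-g=ds+sd=0$; hence $\Hom_{\K{\CA}}(X',Y')=\Hom_{\Ch(\CA)}(X',Y')=\Hom_{\Gr(\CA)}(X',Y')$, and $\iota$ is fully faithful. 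Thus $\iota$ is an equivalence $\Gr(\CA)\xrightarrow{\sim}\K{\CA}\simeq\D{\CA}$.

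It remains to identify the quasi-inverse with $H_*$. The functor $H_*\colon\Ch(\CA)\to\Gr(\CA)$ sends homotopic maps to the same map and quasi-isomorphisms to isomorphisms, so it factors through $\D{\CA}$; and for a zero-differential complex $X'$ one has $H_*(X')=X'$, i.e.\ $H_*\circ\iota=\id_{\Gr(\CA)}$ on the nose. Since $\iota$ is an equivalence, any functor that becomes the identity after precomposition with $\iota$ is automatically a quasi-inverse of $\iota$ (if $G$ is any quasi-inverse, then $H_*\cong H_*\iota G\cong G$), so $H_*$ is an equivalence $\D{\CA}\xrightarrow{\sim}\Gr(\CA)$, as claimed.

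I do not expect a genuine obstacle here: the statement is essentially a formal consequence of \cref{lem-semisimple-complex} and \cref{prop-semisimple-complex}. The only point to handle with a little care is that the splitting in \cref{lem-semisimple-complex} is not canonical, so one should not try to build the natural isomorphism $\iota\circ H_*\cong\id$ by hand; the formal argument above (a left inverse to an equivalence is a two-sided inverse up to isomorphism) sidesteps this entirely.
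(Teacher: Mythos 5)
Your proof is correct and follows essentially the same route as the paper: both rest on \cref{lem-semisimple-complex} to identify every complex with its homology and then check that morphisms in $\D{\CA}$ between zero-differential complexes are just graded morphisms. The only difference is cosmetic — where the paper invokes the vanishing of higher Ext-groups, you verify full faithfulness of the inclusion $\Gr(\CA)\hookrightarrow\K{\CA}\simeq\D{\CA}$ directly via the observation that $ds+sd=0$ when both differentials vanish, and then transfer the equivalence to $H_*$ by the formal left-inverse argument, which is, if anything, slightly more self-contained than the paper's version.
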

\begin{proof}
    We note that by \cref{lem-semisimple-complex} any object in the derived category $(X,d)$ is quasi-isomorphic to its homology $(H_*(X), 0)$. As any object in $\CA$ is projective, we see that all higher Ext-groups must vanish showing that the above functor is fully faithful. As the homology functor is clearly essentially surjective, it is an equivalence. 
\end{proof}

\begin{Cor}\label{cor-semisimple-symmon}
 Let $\CA$ be a closed symmetric monoidal abelian category in which
 every object is projective.  For any $U \in \Ch(\cat A)$, the functors 
 \[
 U \otimes-, \iHom(U,-)\colon \Ch(\CA) \to \Ch(\CA)
 \]
 preserve quasi-isomorphisms.
\end{Cor}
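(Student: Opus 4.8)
The plan is to reduce the statement for $U\otimes -$ and $\iHom(U,-)$ to the case of a quasi-isomorphism between complexes with zero differential, where everything becomes transparent. First I would invoke \cref{prop-semisimple-complex}: since every object of $\CA$ is projective, any quasi-isomorphism $f\colon X\to Y$ in $\Ch(\CA)$ is a homotopy equivalence. Both functors $U\otimes -$ and $\iHom(U,-)$ are additive, hence preserve chain homotopies and chain homotopy equivalences (a nullhomotopy $ds+sd=g$ is carried to $d(U\otimes s)+(U\otimes s)d = U\otimes g$, and similarly for $\iHom$). Therefore $U\otimes f$ and $\iHom(U,f)$ are homotopy equivalences, and in particular quasi-isomorphisms.

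That already proves the corollary, so strictly speaking there is no hard part; the only thing to be careful about is that the argument genuinely only uses additivity of the functors together with \cref{prop-semisimple-complex}, rather than any exactness or flatness hypothesis on $U$. It may be worth remarking, for the reader's benefit, why one cannot expect such a statement without the hypothesis on $\CA$: in general $U\otimes -$ need not be exact, so it need not preserve quasi-isomorphisms. Here the point is that in a category where every object is projective, quasi-isomorphism is the same as chain homotopy equivalence, which is preserved by \emph{every} additive functor.

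I would write the proof in two or three sentences along these lines:

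\begin{proof}
 By \cref{prop-semisimple-complex}, every quasi-isomorphism in $\Ch(\CA)$ is a chain homotopy equivalence. Both $U\otimes-$ and $\iHom(U,-)$ are additive functors $\Ch(\CA)\to\Ch(\CA)$, and any additive functor preserves chain homotopies (if $ds+sd=g$ then $d(U\otimes s)+(U\otimes s)d=U\otimes g$, and similarly for $\iHom(U,-)$), hence preserves chain homotopy equivalences. Thus $U\otimes f$ and $\iHom(U,f)$ are chain homotopy equivalences, and in particular quasi-isomorphisms, for any quasi-isomorphism $f$.
\end{proof}

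The only genuine obstacle, if any, is bookkeeping: confirming that $\iHom(U,-)$ is additive and that the chain homotopy formulas transport correctly through the sign conventions in the internal-hom differential of \cref{rec-internal_homs}; but this is routine and does not affect the structure of the argument.
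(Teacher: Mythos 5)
Your argument is exactly the paper's: it cites \cref{prop-semisimple-complex} to convert quasi-isomorphisms into chain homotopy equivalences and then uses that $U\otimes-$ and $\iHom(U,-)$, being additive, preserve homotopy equivalences. The proposal is correct and matches the paper's proof in both substance and structure.
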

\begin{proof}
 The tensor product and internal hom functors are additive functors and so preserve homotopy equivalences.  Thus, the claim follows from \cref{prop-semisimple-complex}.
\end{proof}

\section{The abelian category \texorpdfstring{$\A{\cat{U}}$}{A(U)}}
\label{sec-AU}
In this section we introduce one of the main categories of interest
for this paper, the abelian category $\A{\cat{U}}$. We recall various
properties from~\cite{PolStrickland2022} and prove several
important features of the subcategory of projective objects
in $\A{\cat{U}}$.

\subsection{Definition and key properties}
We begin with the definition of $\A{\cat{U}}$.
\begin{Def}
    Let $\cat G$ be the category of finite groups and conjugacy classes of surjective group homomorphisms. Given a replete full subcategory $\cat U \subseteq \cat G$, we denote by 
        \[
            \A{\cat{U}} \coloneqq \Fun(\cat{U}^\mathrm{op}, \Mod{k})
        \]
    the additive category of functors $\cat{U}^\mathrm{op} \to \Mod{k}$ for a fixed field $k$ of characteristic zero (which will usually be omitted from the notation).
\end{Def}

\begin{Exa}
    Let $\{G\}\subseteq \cat G$ denote the replete full subcategory spanned by a single finite group $G$. In this case $\A{\{G\}}=\Mod{k[\Out(G)]}$. In this sense the abelian category $\A{\cat U}$ encodes the representation theory of the outer automorphism groups $\Out(G)$ for all $G\in\cat U$.
\end{Exa}

In our work below, we will often need to impose some closure properties on the replete full subcategory $\cat{U}\subseteq\cat{G}$. Let us recall some terminology. 
\begin{Def}
 Let $\cat{U}\subseteq\cat{G}$ be a replete full subcategory. Then $\cat U$ is:
 \begin{enumerate}
    \item \emph{closed downwards}: if $G \in \cat U$ and there exists a surjective group homomorphism $G \twoheadrightarrow H$, then $H \in \cat U$ too. Equivalently, if $G \in \cat{U}$ and $N\lhd G$ is a normal subgroup, then $G/N\in\cat{U}$;
    \item \emph{closed upwards}: if $H \in \cat U$ and there exists a surjective group homomorphism $G \twoheadrightarrow H$, then $G\in \cat U$, too. Equivalently, if $N\lhd G$ with $G/N\in\cat{U}$, then we have $G\in\cat{U}$;
    \item a \emph{groupoid}: if any surjective homomorphism between groups in $\cat{U}$ is an isomorphism; 
    \item \emph{widely closed}: if, whenever we have a pair of surjective homomorphisms $H_0\xleftarrow{}G\xrightarrow{}H_1$ with $G,H_0,H_1\in\cat{U}$, the image of the combined morphism $G\to H_0\times H_1$ is also in $\cat{U}$.  Equivalently: if $G$, $G/N_0$, and $G/N_1$ all lie in $\cat{U}$, then $G/(N_0\cap N_1)$ also lies in $\cat{U}$;  
    \item \emph{multiplicative}: if $G, H \in \cat U$ then $G \times H \in \cat U$;
    \item a \emph{global family}: if it is closed downwards and closed under subgroups.
 \end{enumerate}
 
\end{Def}

\begin{Rem}
 Wide closure is a mild condition, which follows from a wide range of other natural conditions that one might assume. For example, any of the conditions (a), (b), (c), and (f) imply that $\cat{U}$ is widely closed, see \cite[Remark 3.2]{PolStrickland2022}.
\end{Rem}

We now set the standing hypothesis that we will always work under for the rest of the paper, unless otherwise stated.
\begin{Hyp}\label{hyp-basic}
 For the rest of this paper, we assume that $\cat{U}$ is a replete full subcategory of $\cat G$ which is widely closed.
\end{Hyp}
With this assumption in place, we are ready to discuss in more detail the category of global representations.

\begin{Rec}\label{rec-pointwise}
   The category $\A{\cat U}$ inherits its basic properties from the category of $k$-modules through the collection of evaluation functors
   \begin{equation}\label{eq:evaluation}
       \psi_G\colon \A{\cat{U}} \to \Mod{k}, \quad X \mapsto X(G)
   \end{equation}
    for all $G\in\cat U$. For example:
    \begin{enumerate}
        \item any map admits a kernel and a cokernel and these are preserved by $\psi_G$;
        \item $\A{\cat{U}}$ is abelian and $\psi_G$ is exact;
        \item any small diagram admits a limit and a colimit and these are preserved by $\psi_G$;
        \item filtered colimits are exact.
    \end{enumerate}
\end{Rec}

In the next construction we record how these abelian categories are related to one another via change of families functors. 
\begin{Cons}\label{con-functors}
    Given a functor $f \colon \cat U \to \cat V$, we have adjunctions
    \[
    \begin{tikzcd}[column sep=large, row sep=large]
        \A{\cat{U}} \arrow[r,"f_!", yshift=2mm] \arrow[r, yshift=-2mm, "f_*"'] & \A{\cat{V}} \arrow[l, "f^*" description] 
    \end{tikzcd}
    \]
    with left adjoints displayed on the top. By definition $f^*(X)(H)=X(f(H))$, $f_!$ is given by the left Kan extension along $f$, and $f_*$ is given by the right Kan extension along $f$. We note that $f^*$ is always exact. If $f$ is fully faithful, then the theory of Kan extensions tells us that $f_!$ and $f_*$ are also fully faithful, and that if $f$ admits a left adjoint $q$, then $f_! \simeq q^*$, see \cite[Lemma 5.3]{PolStrickland2022} for more detail. 
\end{Cons}

\begin{Exa}
As an important special case of the previous construction, consider the inclusions $j_G\colon \{G\} \to \cat{U}$ for $G \in \cat{U}$. We obtain an adjoint triple 
\[
    \begin{tikzcd}[column sep=2.5cm, row sep=large]
        \A{\cat{U}} \arrow[r, "(j_G)^*" description]  & \A{\{G\}}. \arrow[l,"(j_G)_!"', yshift=2mm] \arrow[l, yshift=-2mm, "(j_G)_*"]
    \end{tikzcd}
    \]
    Since $j_G^*( X)=X(G)$ for all $X\in \A{\cat U}$ we will sometimes write $\mathrm{ev}_G$ for $(j_G)^*$. Note that composing this evaluation functor with the forgetful functor $\A{\{G\}} \to \Mod{k}$ agrees with the functor $\psi_G$ from \eqref{eq:evaluation}.
\end{Exa}

We now introduce some distinguished objects in the abelian category. 
\begin{Def}\label{defn-eGV}
    Let $\cat U\subseteq \cat G$ be a subcategory, $G \in \cat{U}$ and let $j_G\colon \{G\} \to \cat{U}$ denote the inclusion. For any $\Out(G)$-representation $V$, we define $e_{G,V} 
    \coloneqq (j_G)_!(V)$. For brevity, we set $e_G \coloneqq e_{G,k[\Out(G)]}$ and note that by construction
    \[
    \Hom_{\A{\cat U}}(e_G, X)\cong X(G)
    \]
    for all $X \in\A{\cat U}$.
\end{Def}

\begin{Rem}
    Unravelling the previous definition, one can give an explicit description of $e_{G,V}$ as 
    \[
    e_{G,V} \cong V \otimes_{k[\Out(G)]} k[\Hom_{\cat{U}}(-,G)]\in\A{\cat{U}}.
    \]
\end{Rem}

\begin{Rem}\label{rem-restriction-gen}
    A priori the objects $e_{G,V}$ from \cref{defn-eGV} depend on the ambient category $\cat U$ so we can indicate this by writing $e_{G,V}^{\cat U}$. However, given an inclusion $i\colon \cat{U} \to \cat{V}$, 
    it is immediate from the definition that we have 
    \[
    i_! e_{G,V}^{\cat U}\cong e_{G,V}^{\cat V} \quad \mathrm{and} \quad i^*e_{G,V}^{\cat V}\cong  e_{G,V}^{\cat U}
    \]
    for all $G \in \cat{U}$, so often there is no harm in dropping the superscript. 
\end{Rem}

Recall that an abelian category is Grothendieck if it has exact filtered colimits and a set of objects $\cat{X}$, called generators, such that any object $Y$ admits an epimorphism $\bigoplus X_i \to Y$ where $X_i \in \cat{X}$. As a formal consequence of \cref{defn-eGV} we obtain the following:
\begin{Prop}\label{prop-ebullet}
    Let $\cat U\subseteq \cat G$, $G \in \cat{U}$ and $V$ an $\Out(G)$-representation. The object $e_{G,V}$ is projective. Moreover, $\A{\cat{U}}$ is Grothendieck with a set of projective generators given by $\{e_G \mid G \in \cat{U}\}$.
\end{Prop}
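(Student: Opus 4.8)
The plan is to deduce everything formally from the Kan extension description of $e_{G,V}$ and the standard properties of $\Mod{k}$, using \cref{rec-pointwise} as the bridge. First I would establish projectivity of $e_{G,V}$: by \cref{con-functors} the functor $(j_G)_!$ is left adjoint to the exact functor $(j_G)^* = \mathrm{ev}_G$, hence $(j_G)_!$ preserves projectives; since $k$ has characteristic zero, every $\Out(G)$-representation $V$ is projective over $k[\Out(G)]$ by Maschke, so $e_{G,V} = (j_G)_!(V)$ is projective in $\A{\cat{U}}$. Alternatively, and more transparently, one uses the natural isomorphism $\Hom_{\A{\cat{U}}}(e_{G,V}, X) \cong \Hom_{k[\Out(G)]}(V, X(G))$ recorded in \cref{defn-eGV}: the right-hand side is the composite of the exact evaluation $X \mapsto X(G)$ (see \cref{rec-pointwise}(b)) with the exact functor $\Hom_{k[\Out(G)]}(V, -)$, the latter being exact precisely because $V$ is projective over $k[\Out(G)]$; exactness of this composite is the definition of projectivity of $e_{G,V}$.

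Next I would verify the Grothendieck conditions. Exactness of filtered colimits in $\A{\cat{U}}$ is \cref{rec-pointwise}(d), and $\A{\cat{U}}$ has all small colimits by \cref{rec-pointwise}(c), so it only remains to produce a generating set. I claim $\{e_G \mid G \in \cat{U}\}$ works: given any $X \in \A{\cat{U}}$ and any $G \in \cat{U}$, each element of the $k$-module $X(G) \cong \Hom_{\A{\cat{U}}}(e_G, X)$ is represented by a map $e_G \to X$, and assembling these over all $G$ and all elements of all $X(G)$ gives a map $\bigoplus_{G, x \in X(G)} e_G \to X$. To see it is an epimorphism it suffices, by \cref{rec-pointwise}(a)--(b), to check surjectivity after evaluating at each $H \in \cat{U}$; but the image at $H$ already contains the image of the summand indexed by $(H, x)$ for every $x \in X(H)$, namely $x$ itself under the unit of the adjunction, so the evaluation is surjective. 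This exhibits the desired epimorphism from a coproduct of objects in $\{e_G\}$, so $\{e_G \mid G \in \cat{U}\}$ is a set of generators, and they are projective by the first part; hence $\A{\cat{U}}$ is Grothendieck with a set of projective generators.

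I do not expect a serious obstacle here: the statement is formal, and the only genuinely non-trivial input is Maschke's theorem (that $k[\Out(G)]$ is semisimple in characteristic zero), which is what forces $V$ — and thus $e_{G,V}$ — to be projective rather than merely flat or finitely generated. The mild point worth stating carefully is that the adjunction isomorphism $\Hom_{\A{\cat{U}}}(e_{G,V}, X) \cong \Hom_{k[\Out(G)]}(V, X(G))$ is natural in $X$, so that one may legitimately read off exactness of the left-hand side from exactness of the right-hand side; this follows from \cref{con-functors} together with the identification $(j_G)^* = \mathrm{ev}_G$ noted after \cref{defn-eGV}. Everything else is a direct unwinding of \cref{rec-pointwise}.
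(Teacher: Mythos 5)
Your proof is correct and follows essentially the same route as the paper: projectivity of $e_{G,V}$ via Maschke's theorem plus the fact that $(j_G)_!$ is left adjoint to the exact evaluation functor, and generation via the canonical epimorphism $\bigoplus_{G,\, x\in X(G)} e_G \to X$ built from the defining adjunction of $e_G$, with the remaining Grothendieck axioms supplied by \cref{rec-pointwise}. The alternative direct argument you sketch for projectivity is a harmless unwinding of the same adjunction.
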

\begin{proof}
    By Maschke's theorem, $V$ is projective in $\A{\{G\}}$. Since $e_{G,\bullet}$ is left adjoint to an exact functor it preserves projectives, so $e_{G,V}$ is projective in $\A{\cat{U}}$. Note that by the defining property of $e_G$, any $y\in Y(G)$ gives rise to a map 
    \[
    f_y \colon e_G \to Y
    \]
    such that $f_y(1_G)=y.$ Combining these morphisms, we get an epimorphism
    \[
    \bigoplus_{G \in \cat U}\bigoplus_{y \in Y(G)} e_{G} \to Y
    \] 
    showing that the set $\{e_G\}_{G \in \cat U}$ generates $\A{\cat{U}}$. 
\end{proof}
For objects in a Grothendieck abelian category, there is a notion of finite generation which we now recall.
\begin{Def}
 An object $X \in \A{\cat U}$ is \emph{finitely generated} if it is a quotient of a finite sum  of generators, that is if there exists an epimorphism $\bigoplus_{i=1}^ne_{G_i} \twoheadrightarrow X$.  
\end{Def}
It is easy to see that finitely generated objects in a Grothendieck abelian category are closed under quotients and extensions. However, they are not generally closed under subobjects. To address this issue, one often considers locally noetherian Grothendieck abelian categories. A Grothendieck abelian category $\cat A$ is said to be \emph{locally noetherian} if it has a generating set $\cat X$ of noetherian objects; this means that for all generators $X \in \cat X$, any subobject of $X$ is finitely generated. It then follows that subobjects of finitely generated objects are again finitely generated, and so the subcategory $\cat{A}^{\mathrm{fg}}\subseteq \cat A$ of finitely generated objects is abelian. For this reason it is important to have a good supply of examples and counterexamples of families $\cat U$ for which $\A{\cat U}$ is locally noetherian.

\begin{Prop}[{\cite[Theorem A]{PolStrickland2022}}]
    Let $\cat U$ be a subcategory of $\cat G$ and let $p$ be a prime number. 
    \begin{enumerate}
        \item If $\cat U$ is a multiplicative global family of finite abelian $p$-groups, then $\A{\cat U}$ is locally noetherian. 
        \item If $\cat U$ is the subcategory of cyclic $p$-groups, then $\A{\cat U}$ is locally noetherian. 
        \item If $\cat U$ contains the trivial group and infinitely many cyclic groups of prime order, then $\A{\cat U}$ is not locally noetherian. 
    \end{enumerate}
\end{Prop}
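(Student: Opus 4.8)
The plan is to prove each of the three statements separately, using the characterization of local noetherianity via the representability of the objects $e_{G,V}$ and the combinatorics of $\Hom$-sets in $\cat U$. Recall from \cref{prop-ebullet} that $\A{\cat U}$ is generated by the projective objects $e_G$, and that $\Hom_{\A{\cat U}}(e_G, X) \cong X(G)$; moreover $e_G \cong k[\Hom_{\cat U}(-,G)]$ (up to the $\Out(G)$-action). So a subobject of $e_H$ is a subfunctor of $G \mapsto k[\Hom_{\cat U}(G,H)]$, and finite generation of such a subfunctor amounts to controlling it in terms of finitely many ``generating surjections''. The key point throughout is that $\Hom_{\cat U}(G,H)$ is finite for each pair, but a subfunctor of $e_H$ involves the collection of all $G \in \cat U$ mapping onto (subquotients of) $H$, and whether that collection is ``noetherian'' depends on the growth of $\cat U$.

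For part (a), when $\cat U$ is a multiplicative global family of finite abelian $p$-groups, the strategy is to reduce to a statement about finitely generated modules over a (commutative noetherian) ring. A finite abelian $p$-group is classified by a partition, and surjections between them are governed by combinatorial data; one expects that the lattice of subfunctors of $e_H$ is controlled by an explicit noetherian poset/monoid structure (e.g.\ via a ``partition-refinement'' well-quasi-order, in the spirit of Dickson's lemma or Higman's lemma). Concretely I would show that for fixed $H$, the subobjects of $e_H$ satisfy the ascending chain condition by building an order-preserving injection from the poset of subfunctors into a noetherian poset built from the finitely many divisors/quotients of $H$ and the well-quasi-ordering of the relevant index sets. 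Since $e_{G,V}$ is a retract of a finite sum of copies of $e_G$ (as $V$ is a summand of $k[\Out(G)]$ by Maschke), noetherianity of the $e_G$ gives noetherianity of all $e_{G,V}$, hence local noetherianity of $\A{\cat U}$.

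For part (b), when $\cat U$ is the category of cyclic $p$-groups $\{C_{p^n}\}_{n\geq 0}$, there is exactly one object in each order and the surjections $C_{p^m} \twoheadrightarrow C_{p^n}$ (for $m \geq n$) form a totally ordered skeleton, so $e_H = e_{C_{p^n}}$ has value $k$ at $C_{p^m}$ for $m \geq n$ and $0$ otherwise, with all transition maps identities. A subfunctor of $e_{C_{p^n}}$ is then just an ``upward-closed'' choice, i.e.\ determined by a threshold $m \geq n$ (or the empty subfunctor), so the lattice of subobjects is $\{n, n+1, \ldots\} \cup \{\infty\}$ with the opposite order --- visibly noetherian (ACC holds). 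Thus each $e_G$ is noetherian, and again via Maschke every $e_{G,V}$ is noetherian, giving local noetherianity.

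For part (c), when $\cat U$ contains the trivial group $1$ and infinitely many cyclic groups $C_{q_1}, C_{q_2}, \ldots$ of distinct prime orders, the plan is to exhibit a non-finitely-generated subobject of the noetherian candidate $e_1$ (equivalently, show $e_1$ is not noetherian). Since $1$ is a quotient of every $C_{q_i}$, the functor $e_1 = k[\Hom_{\cat U}(-,1)]$ has $e_1(G) = k$ for every $G \in \cat U$ with all transition maps being identities. Consider the subfunctor $S \subseteq e_1$ generated by the basis elements at $C_{q_1}, C_{q_2}, \ldots$: because there are no surjections $C_{q_i} \twoheadrightarrow C_{q_j}$ for $i \neq j$ (distinct primes) and none from $1$, the subfunctor generated by the first $n$ of these does not contain the element at $C_{q_{n+1}}$, so we get a strictly ascending chain of finitely generated subobjects of $e_1$ that does not stabilize. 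Hence $e_1$ is not noetherian; since $\A{\cat U}$ is generated by the $e_G$ and $e_1$ must appear in any generating set up to the usual retract manipulations, $\A{\cat U}$ fails to be locally noetherian. (One should check the standard fact that if a Grothendieck category with a projective generating set is locally noetherian then each generator in a chosen projective generating set is noetherian, or argue directly that a non-noetherian projective object obstructs local noetherianity.)

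The main obstacle is part (a): unlike (b) and (c), which reduce to easy combinatorics once the $\Hom$-sets are written out, part (a) requires genuinely understanding the poset of subfunctors of $e_H$ for $H$ a finite abelian $p$-group and proving it satisfies ACC. This is essentially a well-quasi-ordering statement for the ``Hom-lattice'' of abelian $p$-groups under the refinement ordering; making the injection into a noetherian poset precise --- and checking it is order-preserving and that the target is noetherian --- is where the real work lies, and is presumably carried out carefully in \cite{PolStrickland2022} whose statement we are invoking.
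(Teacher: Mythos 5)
The first thing to say is that the paper does not prove this statement: it is imported wholesale as \cite[Theorem A]{PolStrickland2022}, and the citation \emph{is} the proof. Judged as a self-contained argument, your proposal therefore has to stand on its own, and its central gap is part (a). Everything you write there is a plan rather than a proof: you say one ``expects'' a noetherian poset controlling the subfunctors of $e_H$, and that making the order-preserving injection precise ``is presumably carried out carefully'' in the very reference being cited. But part (a) is precisely the substantial content of the theorem. In Pol--Strickland it rests on a genuine well-quasi-ordering/Gr\"obner-type argument in the style of Sam--Snowden (for elementary abelian $p$-groups it is, via Pontryagin duality, equivalent to the noetherianity of VI-modules in characteristic zero), and the reduction via Maschke from $e_{G,V}$ to $e_G$ does nothing to remove that core difficulty. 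So as written, (a) is not proved.

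There is also a concrete error in part (b). You claim $e_{C_{p^n}}(C_{p^m})=k$ for $m\geq n$ with identity transition maps, so that subfunctors are ``determined by a threshold''. This is false for $n\geq 1$ (except when $p=2$, $n=1$): morphisms in $\cat{G}$ are conjugacy classes of surjections, and since the groups are abelian, $\Hom_{\cat{U}}(C_{p^m},C_{p^n})$ is a torsor over $\Aut(C_{p^n})$ of cardinality $\phi(p^n)=p^{n-1}(p-1)$. Hence $e_{C_{p^n}}(C_{p^m})$ has dimension $\phi(p^n)$, and there are far more subfunctors than thresholds. The conclusion survives with a short repair: every transition map of $e_{C_{p^n}}$ is injective (a bijection on basis elements), so for a subfunctor $S$ the dimensions $\dim S(C_{p^m})$ are non-decreasing in $m$ and bounded by $\phi(p^n)$, hence stabilize at some $m_0$; beyond $m_0$ each transition map carries $S(C_{p^m})$ isomorphically onto $S(C_{p^{m'}})$, so $S$ is generated by its finite-dimensional values at $C_{p^n},\dots,C_{p^{m_0}}$. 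Part (c) is essentially correct: $e_1$ is the constant functor $k$, the subobjects generated by the classes at $C_{q_1},\dots,C_{q_n}$ form a strictly increasing chain because no $C_{q_{n+1}}$ surjects onto any earlier $C_{q_i}$, and the standard fact you flag (in a locally noetherian Grothendieck category every finitely generated object is noetherian) converts the failure of ACC in $e_1$ into the failure of local noetherianity.
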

We next discuss the symmetric monoidal structure of global representations.

\begin{Rec}
Recall that $\A{\cat{U}}$ has a standard closed symmetric monoidal
structure.  The tensor product is given pointwise by
$(X\otimes Y)(G)=X(G)\otimes Y(G)$, and the tensor unit is the
constant functor $\unit(G)=k$ for all $G$ in $\cat{U}$. Since the tensor product is defined
pointwise, it is clear that it is exact, and hence any object in
$\A{\cat{U}}$ is flat. If $\cat{U}$ contains the trivial group $1$, then $\unit \cong e_1$. The ordinary morphism set
$\Hom_{\A{\cat{U}}}(X,Y)\in\Mod{k}$ and the internal mapping object
$\iHom(X,Y)\in\A{\cat U}$ are related by 
\[
    \iHom(X,Y)(G) \cong \Hom_{\A{\cat{U}}}(e_G\otimes X,Y).
\]
In particular, $\Hom_{\A{\cat{U}}}(X,Y)\cong \iHom(X,Y)(1)$ when $1 \in \cat{U}$. This structure
propagates to $\Gr(\A{\cat{U}})$ and $\Ch(\A{\cat{U}})$ as discussed in \cref{rec-internal_homs}. For example, there are internal mapping objects $\iHom(X,Y)\in\Gr(\A{\cat{U}})$ and $\Hom(X,Y)\in\Gr(\Vect)$ related by
\begin{align*}
 \Hom(X,Y)_n &\cong \Hom_{\Gr(\A{\cat{U}})}(\Sigma^nX,Y) \cong \iHom(X,Y)_n(1); \\
 \iHom(X,Y)_n(G) &\cong \Hom_{\Gr(\A{\cat{U}})}(e_G\otimes\Sigma^nX,Y) \cong  \Hom(e_G\otimes X,Y)_n; \\
 \Hom_{\Gr(\A{\cat{U}})}(X,Y) &\cong \Hom(X,Y)_0 \cong \iHom(X,Y)_0(1),
\end{align*}
where the statements involving the trivial group $1$ are conditional on $1$ being contained in $\cat{U}$. If $X,Y$ have differentials, then the internal mapping objects are naturally chain complexes and the same relations as above hold. 
\end{Rec}
As an immediate consequence of the pointwise symmetric monoidal structure we obtain the following:

\begin{Lem}
    Let $f\colon \cat{U} \to \cat{V}$ be a functor. The functor $f^*\colon \A{\cat V} \to \A{\cat U}$ is strong symmetric monoidal and hence $f_!$ (resp., $f_*$) admits a canonical oplax (resp., lax) symmetric monoidal structure.
\end{Lem}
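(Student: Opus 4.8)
The plan is to verify directly that $f^*$ is strong symmetric monoidal using the pointwise description of the tensor product, and then to transfer this structure to the adjoints by the standard doctrinal adjunction principle.

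First I would recall that by definition $f^*(X)(H) = X(f(H))$ for $X \in \A{\cat V}$ and $H \in \cat U$, and that the symmetric monoidal structures on both $\A{\cat U}$ and $\A{\cat V}$ are computed pointwise, namely $(X \otimes Y)(G) = X(G) \otimes Y(G)$ with unit the constant functor at $k$. Hence for any $X, Y \in \A{\cat V}$ and $H \in \cat U$ one has
\[
(f^*X \otimes f^*Y)(H) = X(f(H)) \otimes Y(f(H)) = (X \otimes Y)(f(H)) = f^*(X \otimes Y)(H),
\]
and likewise $f^*(\unit_{\cat V})(H) = k = \unit_{\cat U}(H)$. These identities are natural in $H$ and in the variables $X,Y$, so they assemble to natural isomorphisms $f^*X \otimes f^*Y \xrightarrow{\sim} f^*(X \otimes Y)$ and $\unit_{\cat U} \xrightarrow{\sim} f^*(\unit_{\cat V})$. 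The associativity, left/right unit, and symmetry coherence axioms for these constraints are families of diagrams in $\A{\cat U}$ which, being evaluated objectwise, reduce to the corresponding coherence diagrams in $\Mod{k}$; since the latter commute, $f^*$ is strong symmetric monoidal.

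For the second assertion I would invoke doctrinal adjunction: given an adjunction $L \dashv R$ between symmetric monoidal categories in which $R$ carries a (lax, in particular strong) symmetric monoidal structure, the left adjoint $L$ acquires a canonical oplax symmetric monoidal structure whose structure maps are the mates of those of $R$ under the unit and counit; dually, if $L$ is oplax symmetric monoidal then $R$ is canonically lax symmetric monoidal. Applying this to the adjunctions $f_! \dashv f^*$ and $f^* \dashv f_*$ of \cref{con-functors}, the strong symmetric monoidal structure on $f^*$ makes $f_!$ canonically oplax symmetric monoidal and $f_*$ canonically lax symmetric monoidal. The only mild subtlety is checking compatibility of the mate construction with the symmetry and associativity constraints, but this is automatic from the naturality of the mate correspondence together with the coherence of $f^*$ already established, so there is no real obstacle beyond routine diagram chasing, most of which is subsumed by the formal doctrinal adjunction statement.
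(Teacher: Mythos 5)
Your proof is correct and matches the paper's intent: the paper states this lemma without proof as an ``immediate consequence of the pointwise symmetric monoidal structure,'' and your argument (objectwise verification that $f^*$ is strong symmetric monoidal, then doctrinal adjunction applied to $f_!\dashv f^*$ and $f^*\dashv f_*$) is precisely the standard argument being alluded to.
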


\begin{Lem}\label{lem-flat}
 For $X,Y\in\Ch(\A{\cat{U}})$ we have a natural isomorphism
 \[H_*(X)\otimes H_*(Y)\to H_*(X\otimes Y)\] in $\Gr(\A{\cat{U}})$.  Thus, if $f\colon X\to X'$
 and $g\colon Y\to Y'$ are quasi-isomorphisms, then so is $f\otimes g$.
\end{Lem}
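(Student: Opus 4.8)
The plan is to reduce the statement to the classical Künneth theorem applied objectwise. Recall from \cref{rec-pointwise} that kernels, cokernels, limits, colimits, and the tensor product in $\A{\cat{U}}$ are all computed pointwise through the exact evaluation functors $\psi_G\colon \A{\cat{U}}\to\Mod{k}$, and that a morphism in $\A{\cat{U}}$ — equivalently in $\Gr(\A{\cat{U}})$ or in $\Ch(\A{\cat{U}})$ — is an isomorphism precisely when $\psi_G$ carries it to an isomorphism for every $G\in\cat{U}$. In particular, for $X,Y\in\Ch(\A{\cat{U}})$ and each $G\in\cat{U}$ we have $\psi_G(X\otimes Y)=\psi_G(X)\otimes_k\psi_G(Y)$ and $\psi_G(H_*X)=H_*(\psi_G X)$ as $\Z$-graded chain complexes of $k$-vector spaces, since homology is built from (co)kernels.

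First I would recall the natural Künneth comparison map: for any two chain complexes $C,D$ of $k$-modules there is a map $\mu_{C,D}\colon H_*(C)\otimes_k H_*(D)\to H_*(C\otimes_k D)$ sending the class of $z\otimes w$, for cycles $z,w$, to the class of $z\otimes w$; this is well-defined because a cycle times a boundary is a boundary, and it is visibly natural in $C$ and $D$. Applying this with $C=X(G)$ and $D=Y(G)$, which are complexes of $k$-vector spaces and hence degreewise free, in particular degreewise flat, the Künneth short exact sequence
\[
0 \longrightarrow \bigoplus_{i+j=n} H_i(C)\otimes_k H_j(D) \xrightarrow{\;\mu\;} H_n(C\otimes_k D) \longrightarrow \bigoplus_{i+j=n-1}\operatorname{Tor}_1^k\big(H_i(C),H_j(D)\big) \longrightarrow 0
\]
is available, and its $\operatorname{Tor}$-term vanishes because $k$ is a field, so $\mu_{C,D}$ is an isomorphism. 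Since $\mu$ is natural in both variables, these isomorphisms are in particular compatible with the $\cat{U}^{\op}$-action on $X(-)$ and $Y(-)$; hence they assemble into a natural isomorphism $H_*(X)\otimes H_*(Y)\xrightarrow{\sim} H_*(X\otimes Y)$ in $\Gr(\A{\cat{U}})$, natural in $X$ and $Y$.

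For the second assertion, if $f\colon X\to X'$ and $g\colon Y\to Y'$ are quasi-isomorphisms then $H_*(f)$ and $H_*(g)$ are isomorphisms in $\Gr(\A{\cat{U}})$, so $H_*(f)\otimes H_*(g)$ is an isomorphism; by naturality of the Künneth isomorphism, the square relating $H_*(f)\otimes H_*(g)$ to $H_*(f\otimes g)$ commutes, whence $H_*(f\otimes g)$ is an isomorphism and $f\otimes g$ is a quasi-isomorphism. There is no serious obstacle here: the only points requiring a little care are confirming that the pointwise tensor product and pointwise homology commute with $\psi_G$ (immediate from \cref{rec-pointwise}) and that the Künneth isomorphism, being natural in the complexes, genuinely respects the functoriality in $G\in\cat{U}$ so that it lives in $\Gr(\A{\cat{U}})$ rather than only pointwise; both are routine.
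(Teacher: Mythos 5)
Your proof is correct and follows essentially the same route as the paper: reduce to the pointwise statement via the exact, strong monoidal, jointly conservative evaluation functors $\psi_G$, and then invoke the classical fact that the K\"unneth map for complexes of $k$-vector spaces is an isomorphism. The only cosmetic difference is that you cite the K\"unneth exact sequence with vanishing $\operatorname{Tor}$ over the field $k$, whereas the paper deduces the vector-space case from \cref{lem-semisimple-complex}; both are standard and interchangeable here.
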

\begin{proof}
 The evaluation functors $\psi_G\colon \A{\cat{U}} \to \Mod{k}$ from \eqref{eq:evaluation} are exact, strong monoidal, and jointly conservative. Therefore it suffices to show that the natural pairing
 $H_*(X(G))\otimes H_*(Y(G))\to H_*(X(G)\otimes Y(G))$ is an isomorphism for
 all $G$.  This is just a statement about the category of vector spaces, which
is well known and easily follows from \cref{lem-semisimple-complex}. 
\end{proof}

\subsection{Projective objects}
In this subsection we study the full subcategories 
\[
\mathsf{P}_{\fg}(\cat U)\subseteq \P{\cat{U}}\subseteq \A{\cat U}
\]
spanned by the finitely generated projective objects and the projective objects.

\begin{Rem}\label{rem-projective-objects}
 For any $\Out(G)$-representation $V$, we saw in \cref{prop-ebullet} that
 $e_{G,V} \in \A{\cat{U}}$ is projective. In fact, any projective
 object in $\A{\cat{U}}$ is a direct sum of objects of the form
 $e_{G,V}$, see \cite[Corollary 8.5]{PolStrickland2022}. 
\end{Rem}

In almost all of the cases that we want to consider, either~(a) $\cat{U}$ contains the trivial group, or~(b) $\cat{U}$ is a groupoid with only finitely many isomorphism classes. In both of those cases one can check that the tensor unit $\mathbb{1}$ is a finitely generated projective object, but this need not always be the case. Therefore, we next identify all subcategories for which the tensor unit is a finitely generated projective.

\begin{Def}\label{def-minimal}
 Let $\cat{U}$ be a widely closed subcategory of $\cat{G}$, and let $G$ be a group in $\cat{U}$. We say that $G$ is \emph{minimal} in $\cat{U}$ if every morphism $G\to G'$ in $\cat{U}$ is an isomorphism.
\end{Def}

\begin{Prop}\label{prop-minimal}
 The tensor unit $\unit\in\A{\cat{U}}$ is a projective object if and only if for each $G\in\cat{U}$, there is a unique (up to isomorphism) normal subgroup $N\lhd G$ such that $G/N$ is a minimal object of $\cat{U}$. It is furthermore finitely generated if and only if there are only finitely many isomorphism classes of minimal objects in $\cat{U}$.
\end{Prop}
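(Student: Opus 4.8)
The plan is to build a canonical projective object mapping onto $\unit$ and to determine exactly when that map splits. For $G\in\cat U$, write $\mathcal N(G)$ for the poset of normal subgroups $N\lhd G$ with $G/N\in\cat U$, ordered by inclusion; it is finite and non-empty (it contains $1$), so it has maximal elements, and $G/N$ is a minimal object precisely when $N$ is maximal in $\mathcal N(G)$. Choosing one representative $M$ per isomorphism class of minimal object and writing $f_M\coloneqq e_{M,k}$ with $k$ the trivial $\Out(M)$-representation, the formula $e_{M,k}\cong k\otimes_{k[\Out M]}k[\Hom_{\cat U}(-,M)]$ identifies $f_M(G)$ with the free $k$-module on $\{N\in\mathcal N(G):G/N\cong M\}$, with transition maps $[N]\mapsto[\phi^{-1}(N)]$ along a surjection $\phi\colon G'\twoheadrightarrow G$. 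The augmentations $f_M\to\unit$ (every basis element to $1$) assemble to $\varepsilon\colon\bigoplus_M f_M\to\unit$, which is a pointwise epimorphism because every finite group in $\cat U$ has a minimal quotient, and whose source is projective by \cref{prop-ebullet}. Hence $\unit$ is projective if and only if $\varepsilon$ admits a section.

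Given a section $\sigma$, fix $G$ and any maximal $N_0\in\mathcal N(G)$, so that $M_0\coloneqq G/N_0$ is minimal; naturality of $\sigma$ along the quotient $q_0\colon G\twoheadrightarrow M_0$ yields $\sigma_G=f_{M_0}(q_0)(\sigma_{M_0})\in f_{M_0}(G)$. Since $M_0$ is minimal, $\mathcal N(M_0)=\{1\}$, so $(\bigoplus_M f_M)(M_0)$ is just $k\cdot[1]$, and $\varepsilon_{M_0}\sigma_{M_0}=1$ forces $\sigma_{M_0}=[1]$; therefore $\sigma_G=[\ker q_0]=[N_0]$. As this holds for every maximal $N_0$, and distinct normal subgroups index distinct basis vectors of $\bigoplus_M f_M(G)$, we conclude that $\mathcal N(G)$ has a unique maximal element — that is, $G$ has a unique normal subgroup with minimal quotient. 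For the converse, assume this uniqueness, write $\nu(G)$ for the maximum of $\mathcal N(G)$, and set $\sigma_G\coloneqq[\nu(G)]$; the only point to verify is that $\sigma$ is natural, i.e. $\phi^{-1}(\nu(G))=\nu(G')$ for every $\phi\colon G'\twoheadrightarrow G$ in $\cat U$. Here $\phi^{-1}(\nu(G))\in\mathcal N(G')$ contains $\ker\phi$, so it is $\subseteq\nu(G')$ by maximality; and since then $\ker\phi\subseteq\nu(G')$, the image $\phi(\nu(G'))$ lies in $\mathcal N(G)$ (because $G/\phi(\nu(G'))\cong G'/\nu(G')\in\cat U$), whence $\phi(\nu(G'))\subseteq\nu(G)$ and $\nu(G')\subseteq\phi^{-1}(\nu(G))$. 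Thus $\sigma$ splits $\varepsilon$, $\unit$ is a retract of $\bigoplus_M f_M$, and so $\unit$ is projective. I expect this last, elementary lemma — that taking preimages carries the top of $\mathcal N(G)$ to the top of $\mathcal N(G')$ — together with pinning down the functoriality of $f_M$, to be the only genuinely fiddly points.

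For finite generation, suppose first that $\unit$ is finitely generated and fix an epimorphism $\bigoplus_{i=1}^{n}e_{G_i}\twoheadrightarrow\unit$. Evaluating at a minimal object $M$, the target $\unit(M)=k$ is non-zero, while $e_{G_i}(M)=k[\Hom_{\cat U}(M,G_i)]$ vanishes unless $G_i\cong M$ (a surjection out of a minimal object is an isomorphism); hence some $G_i\cong M$, so there are at most $n$ isomorphism classes of minimal objects. Conversely, if $\unit$ is projective and there are only finitely many such classes, then $\bigoplus_M f_M$ is a finite direct sum, each $f_M=e_{M,k}$ is a quotient of the generator $e_M=e_{M,k[\Out M]}$ (apply the right exact functor $e_{M,-}$ to the augmentation $k[\Out M]\twoheadrightarrow k$) and hence finitely generated, and $\unit$, being a direct summand of this finite sum, is finitely generated as well.
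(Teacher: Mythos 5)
Your argument is correct, and for the harder implication it takes a genuinely different route from the paper. Both proofs start from the same objects $c_M=e_{M,k}$ and the identification of $c_M(G)$ with the free module on $\{N\lhd G: G/N\cong M\}$, and the direction ``condition $\Rightarrow$ projective'' is essentially the paper's (the paper observes that under the hypothesis the canonical map $\bigoplus_M c_M\to\unit$ is already a pointwise isomorphism, which lets it skip your explicit verification that $\phi^{-1}(\nu(G))=\nu(G')$ --- though that check is exactly the content of naturality and is carried out correctly). The real divergence is in ``projective $\Rightarrow$ condition'': the paper invokes the structure theorem that every projective in $\A{\cat{U}}$ is a direct sum of objects $e_{M,V}$ (\cite[Corollary 8.5]{PolStrickland2022}), normalises that decomposition, and reads off the condition; you instead use only that the canonical projective cover $\varepsilon\colon\bigoplus_M c_M\to\unit$ must split, and show that naturality of any section pins its value at $G$ to $[\ker q_0]$ for \emph{every} minimal quotient $q_0$ of $G$, forcing uniqueness. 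This is more elementary and self-contained, at the cost of not exhibiting the full decomposition of $\unit$. Your finite-generation argument is likewise a little different and cleaner in one direction: evaluating a finite generating epimorphism at a minimal object $M$ shows directly that $M$ must occur among the generators, without passing through the decomposition of $\unit$; the converse matches the paper. The only stylistic caveat is that both directions of your argument use (and, in the forward direction, establish) literal uniqueness of the subgroup $N$, which is also how the paper's proof reads the statement's ``unique (up to isomorphism)''.
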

\begin{proof}
    First, for any $M\in\cat{U}$ we put $c_M=e_{M,k}$, so 
    \begin{align*} 
    c_M(T) &\cong k[\Hom_{\cat{U}}(T,M)]\otimes_{k[\Out(M)]}k \\ & \cong 
        k[\Hom_{\cat{U}}(T,M)/\Out(M)] \\
        & \cong   k[\mathrm{Epi}(T,M)/\mathrm{Aut}(M)].
    \end{align*}
    Let $s_M(T)$ be the set of normal subgroups $N\lhd T$ such that $T/N\cong M$; then the map $[\alpha]\mapsto\ker(\alpha)$ gives a bijection $\mathrm{Epi}(T,M)/\Aut(M)\to s_M(T)$.  Thus, we can identify $c_M(T)$ with $k[s_M(T)]$.
    
    Now suppose that the condition on the minimal objects hold for each group in $\cat U$.  Choose a list $\{M_i\}_{i\in I}$ containing one representative of each isomorphism class of minimal objects, and put $U=\bigoplus_{i}c_{M_i}$.  There is an evident natural map $\theta_G\colon U(G)\to k$ sending each element of $s_{M_i}(G)$ to $1$ for all $i$. For each $G\in\cat{U}$ there is a unique normal subgroup $N\lhd G$ such that $G/N$ is minimal in $\cat{U}$, so there is a unique $i$ such that $G/N\cong M_i$, and we find that $c_{M_i}(G)\cong k$ and $c_{M_j}(G)=0$ for $j\neq i$, so $\theta_G$ is an isomorphism.  Thus $\mathbb{1}$ is isomorphic to $U$ and so is projective. It is furthermore finitely generated if $I$ is finite. 

    Conversely, suppose that $\mathbb{1}$ is projective.  Using \cref{rem-projective-objects} we see that $\mathbb{1}\cong\bigoplus_{i\in I}e_{M_i,V_i}$ for some family of objects $M_i\in\cat{U}$ and $\mathrm{Out}(M_i)$-representations $V_i$.  If $M_i\cong M_j$ then we can merge $V_i$ and $V_j$, and if $V_i=0$ then we can drop $M_i$.  Thus, we can assume that $V_i\neq 0$ for all $i$, and that $M_i\not\cong M_j$ for all $i\neq j$.  Now the module $V_i=e_{M_i}(M_i)$ is a non-zero summand in $\mathbb{1}(M_i)=k$, so we must have $V_i=k$ for all $i$, so $\unit \cong \bigoplus_ic_{M_i}$, so for all $G\in\cat{U}$ we have $\sum_i|s_{M_i}(G)|=1$.  This means that for all $G$ there is a unique pair $(N,i)$ such that $N\lhd G$ and $G/N\cong M_i$.  If $G$ is minimal then the morphism $G\to G/N\cong M_i$ must be an isomorphism, so $G$ is isomorphic to one of the groups $M_i$; this in particular proves that if $\unit$ is finitely generated projective then there are finitely many isomorphism classes of minimal objects.  More generally, suppose that $N'$ is any normal subgroup of $G$ such that $G/N'$ is a minimal object of $\cat{U}$.  By applying the previous remark to $G/N'$, we see that $G/N'\cong M_{i'}$ for some $i'$, but the pair $(N,i)$ was unique, so $N'\cong N$.  This proves the final claim. 
\end{proof}

\begin{Exas}\leavevmode\label{ex-unit-projective}
 \begin{itemize}
     \item[(a)] If $1\in\cat{U}$ then clearly it is the only minimal object in $\cat{U}$ so the proposition tells us that $\mathbb{1}$ is a finitely generated projective object, but this is also obvious anyway because $\mathbb{1}=e_1$ in this case.
     \item[(b)] If $\cat{U}$ is a groupoid then~\cite[Proposition 7.3]{PolStrickland2022} shows that all objects of $\A{\cat{U}}$ are projective, and it is clear that all objects of $\cat{U}$ are minimal.  If $\cat{U}$ has only finitely many isomorphism classes then $\mathbb{1}$ will also be finitely generated.  However, this will not hold if $\cat{U}$ has infinitely many isomorphism classes, for example if $\cat{U}$ is the category of finite simple groups.  Alternatively, we can choose any infinite set $P$ of positive integers such that no element of $P$ divides any other element of $P$, then let $\cat{U}$ be the full subcategory of $\cat{G}$ consisting of groups whose order lies in $P$.  Then $\CU$ is again a groupoid with infinitely many isomorphism classes.
     \item[(c)] Fix a prime $p$ and an integer $m>0$, and let $\cat{U}$ be the category of cyclic groups of order $p^k$ with $k\geq m$.  Then $C_{p^m}$ is (up to isomorphism) the unique minimal object in $\cat{U}$, and for any $G\in\cat{U}$ the subgroup $N=\{g^{p^m}\mid g\in G\}$ is the unique normal subgroup such that $G/N\cong C_{p^m}$.  It follows that $\mathbb{1}$ is isomorphic to $c_{C_{p^m}}$, and so is a finitely generated projective object. 
 \end{itemize}
\end{Exas}

\begin{Prop}\label{prop-proj-closed-under-tensor}
 The category $\P{\cat{U}}$ of projective objects is closed under direct sums, products and tensor products. If $\unit \in \A{\cat U}$ is projective then $\P{\cat U}$ is a symmetric monoidal subcategory of $\A{\cat U}$. Finally, if $\cat{U}$ is a multiplicative global family, then  $\P{\cat{U}}$ is closed under internal homs. Similar claims also hold for $\mathsf{P}_{\fg}(\cat U) \subseteq \mathsf{A}_{\fg}(\cat U)$.
\end{Prop}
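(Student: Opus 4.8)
The strategy is to reduce the four assertions to statements about the plain generators $e_G$. By \cref{rem-projective-objects}, every projective object of $\A{\cat{U}}$ is a direct sum of objects $e_{G,V}$; and since $k[\Out(G)]$ is semisimple, every $\Out(G)$-representation $V$ is a retract of a free $k[\Out(G)]$-module, so $e_{G,V}$ is a retract of a direct sum of copies of $e_G$ (using that $e_{G,-}$ preserves retracts and direct sums). Closure under direct sums is then immediate, as $\Hom_{\A{\cat{U}}}(\bigoplus_i P_i, -) \cong \prod_i \Hom_{\A{\cat{U}}}(P_i,-)$ and products are exact in $\Ab$; equivalently, a direct sum of retracts of $\bigoplus e_G$ is again such a retract. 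For the remaining operations we combine this reduction with the explicit descriptions below.

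For products, the key point is that $e_{G,-} \colon \Mod{k[\Out(G)]} \to \A{\cat{U}}$ commutes with arbitrary products: evaluated at any $T \in \cat{U}$ it is the base change $- \otimes_{k[\Out(G)]} k[\Hom_{\cat{U}}(T,G)]$ along a finitely generated free module, and such base change commutes with products. After \cref{rem-projective-objects}, grouping the summands of a family of projectives by isomorphism type rewrites $\prod_i P_i$ as a product $\prod_{[M]} e_{M, W_M}$ indexed by the distinct isomorphism classes $[M]$ that occur. Now every finite group $T$ has only finitely many quotients, so $e_{M,W_M}(T) = 0$ for all but finitely many $[M]$; hence the canonical map $\bigoplus_{[M]} e_{M,W_M} \to \prod_{[M]} e_{M,W_M}$ is an isomorphism, and the product is in fact a direct sum of the projectives $e_{M,W_M}$.

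For tensor products, $e_G \cong k[\Hom_{\cat{U}}(-,G)]$ together with pointwise exactness of $\otimes$ gives $e_G \otimes e_H \cong k[\Hom_{\cat{U}}(-,G) \times \Hom_{\cat{U}}(-,H)]$. I would partition the presheaf $\Hom_{\cat{U}}(-,G) \times \Hom_{\cat{U}}(-,H)$ according to the $(G \times H)$-conjugacy class of the image $K \leq G \times H$ of a pair of surjections: each such $K$ is a subdirect product of $G$ and $H$ and hence lies in $\cat{U}$ by wide closure (being the image of a pair of surjections from an object of $\cat{U}$), and one checks that the corresponding summand is isomorphic to $e_{K, L_K}$, where $L_K$ is the $\Out(K)$-representation induced from the trivial representation of the image $\overline{W}_K$ of $N_{G \times H}(K)$ in $\Out(K)$. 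Thus $e_G \otimes e_H$ is a direct sum of projectives, and the general case follows from the reduction above; since $G \times H$ has only finitely many subgroups, this sum is finite, which yields the claim for $\mathsf{P}_{\fg}(\cat{U})$. Finally, if $\unit$ is projective then $\unit \in \P{\cat{U}}$ by hypothesis, so $\P{\cat{U}}$ is a symmetric monoidal subcategory.

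The internal hom statement is the main obstacle, and here we assume $\cat{U}$ is a multiplicative global family. First, by the previous paragraph $e_T \otimes e_G$ is a finite direct sum of objects $e_{K, L_K}$ with $L_K$ finite-dimensional, hence finitely generated projective and therefore compact; so $\iHom(e_G, -)$ preserves direct sums, and combined with closure under products and the reduction above it suffices to show that $\iHom(e_G, e_H)$ is a finite direct sum of objects $e_{M,V}$ with $V$ finite-dimensional, for all $G, H \in \cat{U}$. Using the formula $\iHom(e_G, e_H)(T) = \Hom_{\A{\cat{U}}}(e_T \otimes e_G, e_H)$, the decomposition of $e_T \otimes e_G$ from the previous paragraph, and Frobenius reciprocity, this equals a finite direct sum, over the subdirect products $K \leq T \times G$ lying in $\cat{U}$, of the spaces $k[\Hom_{\cat{U}}(K,H)/\overline{W}_K]$ (orbit set for the action by precomposition). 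The hard part will be to reorganise this $T$-dependent index set into a single $T$-independent family of generators $e_{M,V}$: this is a Goursat-type bookkeeping argument in which one uses crucially that $\cat{U}$ is closed under subgroups and quotients, so that all the subquotients of $G \times H$ arising as the groups $M$ remain in $\cat{U}$. As $G \times H$ has only finitely many subgroups, the resulting sum is finite with finite-dimensional multiplicity spaces, which also settles the claim for $\mathsf{P}_{\fg}(\cat{U})$.
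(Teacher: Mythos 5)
The paper's own ``proof'' of this proposition is a citation to \cite[Propositions 8.6 and 8.7]{PolStrickland2022}, so what you are really attempting is a reconstruction of those results. For direct sums, products, and tensor products your reconstruction is correct and follows the same route as the cited source: the reduction via \cref{rem-projective-objects} to the generators $e_{G,V}$, the observation that $e_{M,-}$ commutes with products and that only finitely many isomorphism classes $[M]$ contribute at each $T$ (so the relevant product collapses to a direct sum), and the decomposition of $e_G\otimes e_H$ into summands $e_K^{W}\cong e_{K,L_K}$ indexed by conjugacy classes of wide subgroups $K\leq G\times H$ (this is \cite[Proposition 4.11, Remark 4.12]{PolStrickland2022}; wide closure is indeed exactly what puts $K$ in $\cat{U}$). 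The deduction of the symmetric monoidal statement and the finitely generated variants from these decompositions is also fine.

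The internal-hom part, however, has a genuine gap. Your reductions are sound: compactness of $e_T\otimes e_G$ lets $\iHom(e_G,-)$ commute with direct sums, closure under products handles direct sums in the first variable, and the formula $\iHom(e_G,e_H)(T)\cong\bigoplus_{[K]}k[\Hom_{\cat{U}}(K,H)/\overline{W}_K]$, summed over wide $K\leq T\times G$, is correct. But at that point you write that ``the hard part will be to reorganise this $T$-dependent index set into a single $T$-independent family of generators $e_{M,V}$'' and describe it as ``a Goursat-type bookkeeping argument'' without carrying it out. That reorganisation \emph{is} the theorem: one must produce, from each pair (wide $K\leq T\times G$, class of surjection $K\to H$), a surjection out of $T$ onto a fixed group $M$ together with a $T$-independent multiplicity space, check naturality in $T$, and verify that the groups $M$ which arise stay in $\cat{U}$ --- this is where multiplicativity and closure under subgroups and quotients actually enter, and it is not routine (note that $K$ surjects \emph{onto} $T$, so extracting a map out of $T$ requires Goursat's description $K\cong T\times_Q G$ and further analysis of surjections $K\to H$). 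No shortcut is available either: the observation that $\iHom(P,Q)$ is injective when projectives are injective only yields injectivity, since the inclusion $\P{\cat{U}}\subseteq\mathrm{Inj}$ is not an equality. So the closure of $\P{\cat{U}}$ under internal homs remains unproven as written; the missing step is precisely the content of \cite[Proposition 8.7]{PolStrickland2022}.
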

\begin{proof}
 See~\cite[Propositions 8.6 and 8.7]{PolStrickland2022}. Note that similar proofs also work in the finitely generated setting.
\end{proof}

\section{DG-projective vs projective objects}

In this section we show that any complex of projective objects in $\A{\cat U}$ is DG-projective. This is a special feature of the abelian categories $\A{\cat{U}}$. As a consequence we deduce that any quasi-isomorphism between complexes of projectives is a homotopy equivalence. We will use this result in the next section to give a more concrete description of the derived category of $\A{\cat U}$. Finally, we give a counterexample for the dual statement: not every complex of injective objects is DG-injective. Recall that \cref{hyp-basic} is in place throughout.

We begin by recalling the key definitions for this section. For more details, we refer the reader to \cite{Spaltenstein} and \cite{AvramovFoxbyHalperin03}.

\begin{Def}\label{def-k-proj-dg-proj}
    Consider $X\in \Ch(\A{\cat U})$. We say that $X$ is:
    \begin{enumerate}
        \item $K$-\emph{projective} if $\Hom_{\K{\A{\cat U}}}(X, N)=0$ for all $N \in \Ch(\A{\cat U})$ acyclic.
        \item \emph{DG}-\emph{projective} if $X_n$ is projective for all $n\in\bbZ$ and $X$ is $K$-projective. 
    \end{enumerate}   
    Dually we say that $X$ is:
    \begin{enumerate}
        \item $K$-\emph{injective} if $\Hom_{\K{\A{\cat U}}}(N, X)=0$ for all $N \in \Ch(\A{\cat U})$ acyclic.
        \item \emph{DG}-\emph{injective} if $X_n$ is injective for all $n\in\bbZ$ and $X$ is $K$-injective.
    \end{enumerate}
\end{Def}

\begin{Rem}
   Recall for example from \cite[Theorem 10.2.15]{Yekutieli} that $K$-projective objects and $K$-projective resolutions can be used to calculate left derived functors. This highlights the importance of these definitions in homological algebra. Recall also that DG-projective objects can be described as the cofibrant objects in the projective model structure on the category of chain complexes (when it exists), see~\cite[Lemmas 2.3.6 and 2.3.8]{Hovey} or~\cite[Theorem 9.10]{sixmodels} for some instances. We show that the projective model structure on $\Ch(\A{\cat{U}})$ exists in \cref{thm-proj-model-structure}.
\end{Rem}

\begin{Rem}
    In a general abelian category, one can show that any bounded below complex of projective objects is DG-projective. However, in general not all complexes of projective objects are DG-projective. A counterexample is given in \cite[Remark 2.3.7]{Hovey}.
\end{Rem}

Before we can prove the claim about DG-projective complexes we need some preparation. 

\begin{Def}\label{defn-pure}
 Let $s$ be a positive integer, and let $\cat{U}_{=s}$ be the full
 subcategory of $\cat{U}$ consisting of groups of order $s$.  This is a
 groupoid (and therefore is widely closed), so all objects of
 $\A{\cat{U}_{=s}}$ are projective by~\cite[Proposition 6.3]{PolStrickland2022}.  Let
 $i\colon \cat{U}_{=s}\to\cat{U}$ be the inclusion which gives rise to an adjunction
 \[
 i_!\colon \A{\cat{U}_{=s}} \rightleftarrows \A{\cat{U}}\noloc i^*
 \] in which the left adjoint $i_!$ is fully faithful. We say that an object 
 $Q\in\A{\cat{U}}$ is \emph{$s$-pure} if it satisfies any of the following
 conditions, which are equivalent by~\cite[Proposition 8.3]{PolStrickland2022}:
 \begin{itemize}
  \item[(a)] $Q\cong i_!(P)$ for some $P\in\A{\cat{U}_{=s}}$;
  \item[(b)] $Q$ is a retract of $i_!(P)$ for some $P\in\A{\cat{U}_{=s}}$;
  \item[(c)] $Q$ is a retract of some direct sum of objects $e_G$ with
   $G\in\cat{U}_{=s}$;
  \item[(d)] the counit map $i_!i^*(Q)\to Q$ is
   an isomorphism.
 \end{itemize}
  We note that any $s$-pure object is necessarily projective by (c).
  We will also say that an object $X$ in $\Gr(\A{\cat{U}})$
 or $\Ch(\A{\cat{U}})$ is $s$-pure if $X_n$ is $s$-pure for all $n$.
\end{Def}

\begin{Prop}\label{prop-pure-split}
 Fix a positive integer $s$.  Let $X$ be an $s$-pure chain complex in
 $\Ch(\A{\cat{U}})$. Then there is an isomorphism $X \cong X' \oplus X''$ in $\Ch(\A{\cat{U}})$ where $X'$ has zero differential and $X''$ is contractible.
\end{Prop}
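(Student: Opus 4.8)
The plan is to reduce the statement to \cref{lem-semisimple-complex} applied in the groupoid case, exploiting the adjunction $i_! \dashv i^*$ attached to the inclusion $i\colon\cat{U}_{=s}\to\cat{U}$. Recall from \cref{defn-pure}(d) that $X$ being $s$-pure means precisely that the counit $\varepsilon_{X_n}\colon i_!i^*(X_n)\to X_n$ is an isomorphism for every $n$. Since the counit $\varepsilon\colon i_!i^*\Rightarrow\Id$ is natural, the maps $\varepsilon_{X_n}$ assemble (using naturality against the differentials $d_n$) into a chain map $\varepsilon_X\colon i_!i^*X\to X$, which is then an isomorphism in $\Ch(\A{\cat{U}})$ because it is a degreewise isomorphism. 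Thus it suffices to produce the desired splitting for the complex $i^*X\in\Ch(\A{\cat{U}_{=s}})$ and then push it forward along $i_!$.

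Next I would work inside $\A{\cat{U}_{=s}}$. As recorded in \cref{defn-pure}, the subcategory $\cat{U}_{=s}$ is a groupoid, so every object of $\A{\cat{U}_{=s}}$ is projective. Hence \cref{lem-semisimple-complex} applies verbatim to the complex $Y\coloneqq i^*X$ and yields a decomposition $Y\cong Y'\oplus Y''$ in $\Ch(\A{\cat{U}_{=s}})$ in which $Y'$ has zero differential and $Y''$ is contractible.

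Finally I would transport this splitting along $i_!$. Being a left adjoint, $i_!$ is additive, so it preserves finite biproducts, sends the zero map to the zero map, and preserves chain homotopies: a contraction $s$ with $ds+sd=\Id$ on $Y''$ is carried to a contraction $i_!(s)$ of $i_!(Y'')$. Therefore, setting $X'\coloneqq i_!(Y')$ and $X''\coloneqq i_!(Y'')$, we get $X\cong i_!i^*X\cong i_!(Y')\oplus i_!(Y'')=X'\oplus X''$ in $\Ch(\A{\cat{U}})$, with $X'$ of zero differential and $X''$ contractible, as required.

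The argument is essentially formal, and there is no serious obstacle; the one point that must be handled with a little care is the passage between $\A{\cat{U}}$ and the semisimple-like category $\A{\cat{U}_{=s}}$, which is exactly what the purity characterization \cref{defn-pure}(d) delivers via the counit isomorphism $i_!i^*X\cong X$. Running \cref{lem-semisimple-complex} directly in $\A{\cat{U}}$ would instead require knowing that the cycles and boundaries of an $s$-pure complex are again $s$-pure, which is less transparent, so routing through $i_!i^*$ is the cleanest route.
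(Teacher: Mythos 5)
Your proof is correct and follows the same route as the paper: the paper simply notes that the $s$-pure objects form a subcategory equivalent to $\A{\cat{U}_{=s}}$ (precisely the $i_!\dashv i^*$ identification you spell out) and invokes \cref{lem-semisimple-complex}. Your version just makes explicit the transport of the splitting and the contraction along the additive functor $i_!$, which is a harmless elaboration of the same argument.
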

\begin{proof}
 The category of $s$-pure objects in $\A{\cat{U}}$ is equivalent to the
 category $\A{\cat{U}_{=s}}$ in which all objects are projective, so the
 claim follows from \cref{lem-semisimple-complex}.
\end{proof}
We next recall the following construction from \cite[Construction 5.11]{PolStrickland2022} which will play an important role in this paper. 
\begin{Cons}\label{cons-filtration}
  Let $s \geq 0$ and let $\cat{U}_{\leq s}$ be the full subcategory of $\cat{U}$ consisting of groups of order $\leq s$. Consider the induced adjunction
  \[
 i_!^{\leq s}\colon \A{\cat{U}_{\leq s}} \rightleftarrows \A{\cat{U}} \noloc i^*_{\leq s}.
 \]
  For $X \in \A{\cat{U}}$, we define $L_{\leq s}X$ to be the image of
  the counit map $i_!^{\leq s}i^*_{\leq s}X \to X$. Equivalently,
  $L_{\leq s}X$ is the smallest subobject of $X$ that contains $X(G)$
  whenever $|G|\leq s$. When convenient we will also write $L_{<s}X$ for 
  $L_{\leq(s-1)}X$.  This gives an ascending filtration  
  \[
  0= L_{\leq 0}X \subseteq L_{\leq 1}X \subseteq \ldots \subseteq L_{\leq s}X \subseteq L_{\leq s+1}X \subseteq \ldots  
  \]
  with subquotients denoted by $L_sX=L_{\leq s}X/L_{<s}X$. Given a map $f\colon X \to Y$ of objects in $\A{\cat{U}}$, one obtains maps $L_{\leq s}f \colon L_{\leq s}X \to L_{\leq s}Y$ and $L_s f \colon L_s X \to L_s Y$, so this filtration is natural in $X$. We will extend this filtration to the category of chain complexes by applying these functors at each degree of the chain complex. 
\end{Cons}
\begin{Exa}
    Given $G \in \cat U$ and an $\Out(G)$-representation $V$, one sees that $L_{\leq s}e_{G,V}$ is $e_{G,V}$ if $|G|\leq s$ and zero otherwise, compare \cref{rem-restriction-gen}. In particular $L_s e_{G,V}\cong e_{G,V}$ precisely if $s=|G|$, and is zero otherwise.  
\end{Exa}

\begin{Rem}\label{rem-filt}
 It is proved in~\cite[Proposition 8.4]{PolStrickland2022} that if $X$ is
 projective then $L_sX$ is $s$-pure and that $X$ is unnaturally isomorphic to $\bigoplus_sL_sX$.
 We now put $L_{\oplus }X=\bigoplus_sL_{\leq s}X$, and we let
 $\Sigma\colon L_{\oplus }X\to L_{\oplus }X$ be the direct sum of the inclusions
 $L_{\leq s}X\to L_{\leq(s+1)}X$.  It is easy to check that $1-\Sigma$ is
 injective with cokernel naturally isomorphic to $X$.  Thus, we have a
 short exact sequence 
 \[0 \to L_{\oplus }X \xrightarrow{1-\Sigma} L_{\oplus }X \xrightarrow{} X \to 0, \]
 which will be chainwise split if $X$ is projective. 
\end{Rem}
As a first application of this filtration we prove the following result which we will need later in the paper. The next result shows that in particular any complex of projective objects in $\A{\cat{U}}$ is $K$-projective and hence DG-projective. 

\begin{Thm}\label{thm-cofibrant}
 If $X\in\Ch(\P{\cat U})$ and $Y\in \Ch(\A{\cat U})$ is acyclic, then 
    \[ 
        \Hom_{\K{\A{\cat{U}}}}(\Sigma^t X,Y) = 0 
    \]
 for all $t\in \bbZ$.
\end{Thm}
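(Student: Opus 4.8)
The plan is to use the short exact sequence from \cref{rem-filt} to reduce the claim to statements about the $s$-pure subquotients $L_sX$, where everything collapses because $s$-pure complexes split (\cref{prop-pure-split}). Concretely, since $X$ is projective, \cref{rem-filt} provides a chainwise-split short exact sequence
\[
0 \to L_\oplus X \xrightarrow{1-\Sigma} L_\oplus X \to X \to 0
\]
in $\Ch(\P{\cat U})$. Applying the functor $\Hom(-,Y)$ to a chainwise-split short exact sequence of complexes yields a short exact sequence of Hom-complexes of abelian groups, hence a long exact sequence in $H_*$; in particular, writing $\Sigma^t$-twists in, one gets a Milnor-type $\lim^1$ exact sequence relating $\Hom_{\K{}}(\Sigma^t X, Y)$ to $\prod_s \Hom_{\K{}}(\Sigma^t L_{\leq s}X, Y)$ and its $\lim^1$. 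So it suffices to show $\Hom_{\K{}}(\Sigma^t L_{\leq s}X, Y) = 0$ for all $s$ and $t$, together with the corresponding $\lim^1$-vanishing.

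Next I would induct on $s$. For the inductive step, use the short exact sequence $L_{<s}X \to L_{\leq s}X \to L_s X$ of complexes in $\Ch(\P{\cat U})$. Since $L_sX$ is $s$-pure (\cref{rem-filt}), each term $(L_sX)_n$ is projective, so this sequence is chainwise split and induces a long exact sequence in $\Hom_{\K{}}(\Sigma^{\bullet}(-),Y)$ (using \cref{rem-triangulation}: a chainwise-split SES gives a distinguished triangle in $\K{\A{\cat U}}$). By the inductive hypothesis the $L_{<s}X$ terms vanish, so it remains to prove $\Hom_{\K{}}(\Sigma^t L_sX, Y) = 0$. Here is the crucial point: by \cref{prop-pure-split}, $L_sX \cong (L_sX)' \oplus (L_sX)''$ with $(L_sX)'$ having zero differential and $(L_sX)''$ contractible. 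The contractible summand contributes nothing in $\K{\A{\cat U}}$, and for the zero-differential summand $Z := (L_sX)'$ one computes, using \cref{cor-ext-Ch} or directly, that $\Hom_{\K{}}(\Sigma^t Z, Y) \cong \prod_i \Hom_{\A{\cat U}}((\Sigma^t Z)_{i}, H_i(Y))$ — but $Y$ is acyclic, so $H_i(Y) = 0$ and the whole thing vanishes. This also handles the base case $s=0$ (where $L_{\leq 0}X = 0$) trivially.

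Finally, I would return to the $\lim^1$ term. The tower $\{\Hom_{\K{}}(\Sigma^t L_{\leq s}X, Y)\}_s$ has all terms zero by the induction just completed, so its inverse limit and $\lim^1$ both vanish, and the Milnor sequence forces $\Hom_{\K{}}(\Sigma^t X, Y) = 0$, as desired.

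**Main obstacle.** The delicate part is handling the passage to the colimit correctly: one must be careful that the filtration $L_{\leq s}X$ is by genuine subcomplexes (not just degreewise), that $X = \colim_s L_{\leq s}X$, and that the resulting short exact sequence of complexes stays \emph{chainwise} split so that $\Hom(-,Y)$ is exact on it and the Milnor/$\lim^1$ machinery applies — all of which rests on projectivity of $X$ and the cited results \cref{rem-filt} and \cref{rem-triangulation}. Everything downstream (the splittings of the $s$-pure pieces and the acyclicity input) is then routine.
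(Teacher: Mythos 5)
Your proposal is correct and follows essentially the same route as the paper: filter by $L_{\leq s}$, induct using the degreewise-split sequences $L_{<s}X\to L_{\leq s}X\to L_sX$, split the $s$-pure subquotients into a zero-differential piece (killed by acyclicity of $Y$ via \cref{cor-ext-Ch}) plus a contractible piece, and assemble via the $1-\Sigma$ sequence from \cref{rem-filt}. The only cosmetic difference is that you package the last step as a Milnor $\lim^1$ sequence, whereas the paper simply notes that $\Hom_{\K{\A{\cat{U}}}}(\Sigma^t L_{\oplus}X,Y)\cong\prod_s\Hom_{\K{\A{\cat{U}}}}(\Sigma^t L_{\leq s}X,Y)=0$ and reads off the vanishing directly from the long exact sequence of the associated distinguished triangle.
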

\begin{proof}
 Let $\cat{S}$ be the subcategory of objects $X'\in\Ch(\P{\cat{U}})$ that have
 the property $\Hom_{\K{\A{\cat{U}}}}(\Sigma^t X',Y) = 0$ for all $t\in \bbZ$; we must show that any $X \in \Ch(\P{\cat U})$ is in $\cat{S}$.
 It is easy to see that $\cat{S}$ is closed under arbitrary direct sums, retracts, suspensions, and desuspensions. All contractible complexes lie in $\cat{S}$, and \cref{cor-ext-Ch} tells us that complexes with zero differential lie in $\cat{S}$.   Using \cref{prop-pure-split}
 we now see that every $s$-pure complex lies in $\cat{S}$.  

 Now consider an arbitrary object $X\in\Ch(\P{\cat{U}})$.  The complexes
 $L_sX$ are $s$-pure by \cref{rem-filt} and so lie in $\cat{S}$.  The short exact sequences
 \[0 \to L_{<s}X\to L_{\leq s}X\to L_{s}X \to 0\] give rise to distinguished triangles in $\K{\A{\cat{U}}}$, using which we can prove by induction
 that $L_{\leq s}X\in\cat{S}$ for all $s$, hence $L_{\oplus }X$ also lies in
 $\cat{S}$.   The short exact sequence 
 \[0 \to L_{\oplus }X \xrightarrow{1-\Sigma} L_{\oplus }X \xrightarrow{} X \to 0 \]
 also gives a distinguished triangle, so we deduce that $X\in\cat{S}$ as required. 
\end{proof}

\begin{Cor}\label{cor-no-ext}
 Let $X\in \Ch(\P{\cat U})$. Then
 for all $t>0$ and all acyclic complexes $Y\in\Ch(\A{\cat{U}})$ we have
 $\Ext^t_{\Ch(\A{\cat{U}})}(X,Y)=0$.
\end{Cor}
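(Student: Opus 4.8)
The plan is to simply combine the two results immediately preceding the statement. By \cref{cor-ext-Ch}, for $X\in\Ch(\P{\cat U})$, $Y\in\Ch(\A{\cat{U}})$ and $t>0$ there is an isomorphism
\[
\Ext^t_{\Ch(\A{\cat{U}})}(X,Y) \cong \Hom_{\K{\A{\cat{U}}}}(\Sigma^{-t}X,Y).
\]
Now \cref{thm-cofibrant} asserts that $\Hom_{\K{\A{\cat{U}}}}(\Sigma^{s}X,Y)=0$ for every $s\in\bbZ$ whenever $X\in\Ch(\P{\cat U})$ and $Y$ is acyclic; applying this with $s=-t$ gives the vanishing. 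So the corollary follows with no further work.

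There is essentially no obstacle here: the statement is a formal consequence of the $\Ext$-identification in $\Ch(\A{\cat{U}})$ and the key input \cref{thm-cofibrant}. The only point to double-check is bookkeeping of the suspension degree, namely that the $t$-th term of the explicit projective resolution $\Sigma^{-t-1}X\to\Delta\Sigma^{-t}X\to\Sigma^{-t}X$ used in \cref{cor-ext-Ch} really produces $\Hom_{\K{\A{\cat{U}}}}(\Sigma^{-t}X,Y)$ in positive degrees, but this is exactly what \cref{cor-ext-Ch} records, so nothing new is needed.
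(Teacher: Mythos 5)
Your proposal is correct and is exactly the paper's argument: the paper proves the corollary by combining \cref{thm-cofibrant} with \cref{cor-ext-Ch} in precisely this way. Nothing further is needed.
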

\begin{proof}
 Combine \cref{thm-cofibrant} with \cref{cor-ext-Ch}.
\end{proof}

We record some formal consequences. 

\begin{Cor}\label{cor-acyc-proj}
 If $X\in \Ch(\P{\cat U})$ is an acyclic complex, then $X$ is contractible, and so is projective in $\Ch(\A{\cat{U}})$.
\end{Cor}

\begin{proof}
    The identity map on $X$ must be chain homotopic to zero by \cref{thm-cofibrant}, and hence $X$ is contractible. Therefore it is a projective object in  $\Ch(\A{\cat{U}})$ by \cref{cor-contr-proj}. 
\end{proof}

\begin{Cor}\label{cor-cofibrant}
 Suppose that $X\in\Ch(\P{\cat{U}})$ and that $Y\in\Ch(\A{\cat{U}})$ is
 acyclic.  Then $\iHom(X,Y)$ is also acyclic. 
\end{Cor}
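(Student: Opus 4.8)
The plan is to deduce this from \cref{thm-cofibrant} by evaluating at each group $G\in\cat U$. An object of $\A{\cat U}$ vanishes precisely when all of its values vanish, and each evaluation functor $\psi_G$ is exact by \cref{rec-pointwise}, hence commutes with the formation of homology; so it suffices to prove that the chain complex of $k$-modules $\iHom(X,Y)(G)$ is acyclic for every $G\in\cat U$.

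By the standard description of the internal hom of $\A{\cat U}$, propagated to chain complexes as in \cref{rec-internal_homs}, there is a natural isomorphism of chain complexes $\iHom(X,Y)(G)\cong\Hom(e_G\otimes X,Y)$, and, as is standard, $H_n\Hom(A,B)\cong\Hom_{\K{\A{\cat U}}}(\Sigma^n A,B)$ for all $n\in\bbZ$. Thus it remains to show that $\Hom_{\K{\A{\cat U}}}(\Sigma^n(e_G\otimes X),Y)=0$ for all $n$. This is exactly an instance of \cref{thm-cofibrant}, provided we know that $e_G\otimes X$ lies in $\Ch(\P{\cat U})$: but $e_G$ is projective by \cref{prop-ebullet}, $X_n$ is projective by hypothesis, and $\P{\cat U}$ is closed under tensor products by \cref{prop-proj-closed-under-tensor}, so each $(e_G\otimes X)_n=e_G\otimes X_n$ is projective. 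Applying \cref{thm-cofibrant} with the acyclic complex $Y$ then completes the argument.

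I do not expect a substantive obstacle here; the main point is the reduction to evaluations together with the bookkeeping identifying $H_\ast\iHom(X,Y)(G)$ with morphism groups in $\K{\A{\cat U}}$ out of $\Sigma^\bullet(e_G\otimes X)$. The one subtlety worth flagging is that the identification $\iHom(X,Y)(G)\cong\Hom(e_G\otimes X,Y)$ must be used as an isomorphism of chain complexes, i.e.\ compatibly with the differentials, and not merely degreewise; this is part of the package in \cref{rec-internal_homs}. If one prefers to avoid relying on it, one can instead combine the degreewise adjunction $\Hom_{\A{\cat U}}(e_G\otimes-,-)\cong\iHom(-,-)(G)$ with the exactness of $\psi_G$ to reduce directly to the vanishing supplied by \cref{thm-cofibrant}.
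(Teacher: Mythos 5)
Your argument is correct and is essentially identical to the paper's proof: both identify $H_n(\iHom(X,Y)(G))$ with $\Hom_{\K{\A{\cat{U}}}}(\Sigma^n e_G\otimes X,Y)$ via the internal-hom formula of \cref{rec-internal_homs}, observe that $e_G\otimes X$ lies in $\Ch(\P{\cat U})$ by \cref{prop-proj-closed-under-tensor}, and conclude by \cref{thm-cofibrant}. No issues.
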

\begin{proof}
 We need to show that $H_n(\iHom(X,Y)(G))=0$ for all $n\in\Z$ and
 $G\in\cat{U}$.  This group can be identified with
 $\Hom_{\K{\A{\cat{U}}}}(X',Y)$ where $X'=\Sigma^n e_G\otimes X$.  By \cref{prop-proj-closed-under-tensor} we deduce that $X'\in\Ch(\P{\cat{U}})$, so
 $\Hom_{\K{\A{\cat{U}}}}(X',Y)=0$ by \cref{thm-cofibrant}.
\end{proof}

\begin{Cor}\label{cor-cofibrant-alt}
 Suppose that $X\in\Ch(\P{\cat U})$ and that $f\colon Y\to Z$ is a
 quasi-isomorphism in $\Ch(\A{\cat{U}})$.  Then the induced map 
 \[ 
 f_* \colon  \Hom_{\K{\A{\cat{U}}}}(X,Y) \to \Hom_{\K{\A{\cat{U}}}}(X,Z) 
 \]
 is an isomorphism.  In particular, any quasi-isomorphism $f \colon Y \to Z$ in $\Ch(\P{\cat U})$ is a homotopy equivalence.
\end{Cor}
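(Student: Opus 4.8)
The plan is to deduce the statement formally from \cref{thm-cofibrant} by passing to the mapping cone. First I would form the mapping cone $Cf$ of $f\colon Y\to Z$, which sits in a distinguished triangle
\[
Y\xrightarrow{f}Z\xrightarrow{i}Cf\xrightarrow{p}\Sigma Y
\]
in $\K{\A{\cat U}}$, as recalled in \cref{rem-triangulation}. The associated long exact sequence in homology, together with the hypothesis that $f$ is a quasi-isomorphism, shows at once that $Cf$ is acyclic.

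Next I would apply the cohomological functor $\Hom_{\K{\A{\cat U}}}(X,-)$ to this triangle. Rotating the triangle to $\Sigma^{-1}Cf\to Y\to Z\to Cf$ and using the suspension equivalence on $\K{\A{\cat U}}$, one obtains an exact sequence
\[
\Hom_{\K{\A{\cat U}}}(\Sigma X,Cf)\to\Hom_{\K{\A{\cat U}}}(X,Y)\xrightarrow{f_*}\Hom_{\K{\A{\cat U}}}(X,Z)\to\Hom_{\K{\A{\cat U}}}(X,Cf).
\]
Since $X\in\Ch(\P{\cat U})$ and $Cf$ is acyclic, \cref{thm-cofibrant} (applied with $t=1$ and $t=0$) forces the two outer groups to vanish, so $f_*$ is an isomorphism, as required.

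For the final assertion I would specialise to the case $Y,Z\in\Ch(\P{\cat U})$, so that the first part applies with $X$ any complex of projectives. Taking $X=Z$, the isomorphism $f_*\colon\Hom_{\K{\A{\cat U}}}(Z,Y)\xrightarrow{\sim}\Hom_{\K{\A{\cat U}}}(Z,Z)$ produces a chain map $g\colon Z\to Y$ with $fg\simeq 1_Z$. Taking $X=Y$ and using that $f_*\colon\Hom_{\K{\A{\cat U}}}(Y,Y)\to\Hom_{\K{\A{\cat U}}}(Y,Z)$ is in particular injective, the identity $f_*(gf)=(fg)f\simeq f=f_*(1_Y)$ gives $gf\simeq 1_Y$. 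Hence $f$ and $g$ are mutually inverse in $\K{\A{\cat U}}$, i.e.\ $f$ is a homotopy equivalence.

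There is no serious obstacle here: the argument is purely formal once \cref{thm-cofibrant} is available, and the only point requiring a little care is bookkeeping the suspension appearing in the outer terms of the long exact sequence, so as to confirm that \cref{thm-cofibrant} applies there too — which it does, since that theorem is stated for all suspensions $\Sigma^t X$ simultaneously.
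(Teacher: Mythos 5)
Your proposal is correct and follows essentially the same route as the paper: the first claim is deduced from the acyclicity of the mapping cone, the long exact sequence for $\Hom_{\K{\A{\cat{U}}}}(X,-)$, and \cref{thm-cofibrant}, exactly as in the paper's proof. For the final assertion the paper instead invokes two-out-of-three to see that $g$ is a quasi-isomorphism and reruns the argument to obtain a right homotopy inverse for $g$, whereas you use injectivity of $f_*$ on $\Hom_{\K{\A{\cat{U}}}}(Y,-)$; both are valid and equally short.
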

\begin{proof}
 The mapping cone of $f$ is acyclic, so the first claim follows from the long exact sequence associated to the triangle $Y \xrightarrow{f} Z \to C(f)$, together with \cref{thm-cofibrant}. Let us now discuss the last claim. By setting $X=Z$ in the previous claim we find that there exists a map $g \colon Z \to Y$ such that $fg \simeq \mathrm{id}_Z$. By the $2$-out-of-$3$ property we deduce that $g$ is a quasi-isomorphism, and so by applying the same argument to $g$ we find that there exists $h \colon Y \to Z$ such that $gh \simeq \mathrm{id}_Y$. Finally, $f\simeq f(gh) \simeq (fg)h\simeq h$, concluding the proof.
\end{proof}

In contrast to the case with projectives, it is not true that every
 complex of injectives of $\A{\cat{U}}$ is DG-injective, and it is not true that
 every acyclic complex of injectives is contractible. We end this section with an explicit counterexample.
\begin{Exa}
  For any set $T$ we have a simplicial set $ET$ with
 \[
 (ET)_n=\Map([n],T)\cong T^{n+1}.
 \]
 By the Dold--Kan correspondence, this gives rise to a chain complex $k ET$, where $(k ET)_n$ is the vector space over $k$ with basis $(ET)_n$.  The homology of
 $k ET$ is just a copy of $k$ concentrated in dimension $0$, except
 in the case where $T=\emptyset$, in which case $H_*(k ET)=0$.

 Let $\cat{U}$ be the family of finite $p$-groups and put $X(G)=k E\Hom_{\cat{U}}(G,C_p)$. We claim that the complex $X$ can be adjusted to provide
 a counterexample. The complex $X$ has $X_n\cong e_{C_p}^{\otimes(n+1)}$ for $n\geq 0$, so each $X_n$ is projective by \cref{prop-proj-closed-under-tensor}, and therefore also injective 
 by~\cite[Proposition 15.1]{PolStrickland2022} as $\cat{U}$ is a multiplicative global family.  We have $H_*(X)(1)=0$ and  
 $H_*(X)(G)\cong H_0(X)(G)\cong k$ for $G\neq 1$ as explained above.  
 We now construct a new complex $Z$ with $Z_n=X_n$ for $n\geq 0$ and $Z_{-1}=\unit$ and 
 \[ Z_{-2}(G) \cong \begin{cases}
     k & \text{ if } G\cong 1 \\
     0 & \text{ otherwise}.
 \end{cases}\]
 There is an evident way to define the differentials so that $Z$ is 
 acyclic.  Note that the object $Z_{-1}=e_1$ is also projective and 
 therefore injective.  Moreover, there is a natural isomorphism 
 $\Hom_{\A{\cat{U}}}(T,Z_{-2})\cong\Hom_k(T(1),k)$ which shows that $Z_{-2}$ 
 is also injective (but not projective).
 
 We now see that $Z$ is an acyclic complex of injectives, so if 
 it were DG-injective, there would be a contraction, consisting of
 morphisms $s_i\colon Z_i\to Z_{i+1}$.  As $Z_{-3}=0$, we would have
 $d_{-1}\circ s_{-2}=1\colon Z_{-2}\to Z_{-2}$.  However, it is easy to see
 that  
 \[ \Hom_{\A{\cat{U}}}(Z_{-2},Z_{-1})=0, \]
 so $s_{-2}=0$, which gives a contradiction.
\end{Exa}

\section{A model for the derived category}

In this section we show that the homotopy category of chain complexes of projectives is a
model for the derived category of $\A{\cat{U}}$, see \cref{thm-derived-category-equal-K-proj}. We remind the reader that throughout the paper we are working under \cref{hyp-basic}.

We start by setting the notation for the derived category.

\begin{Def}
    Let $\cat U \subseteq \cat G$ be a subcategory. We write 
    \[
    \D{\cat U}\coloneqq \D{\A{\cat U}}=\K{\A{\cat U}}[\mathrm{q.iso}^{-1}]
    \]
    for the derived category of $\A{\cat U}$, obtained from the homotopy category by inverting the quasi-isomorphisms.
\end{Def}

\begin{Rem}
    As tensor products of quasi-isomorphisms are
 quasi-isomorphisms (see \cref{lem-flat}), the derived category $\D{\cat{U}}$ inherits a
 symmetric monoidal structure which is compatible with that of $\A{\cat U}$ in the sense that there is a commutative diagram 
 \[
\begin{tikzcd}
\A{\cat U} \times \A{\cat U} \arrow[r,"-\otimes -"] \arrow[d, hook] & \A{\cat U} \arrow[d, hook] \\
\sfD(\cat U) \times \sfD(\cat U) \arrow[r, "-\otimes-"']& \sfD(\cat U),
\end{tikzcd}
\]
where the vertical arrows are the inclusions as complexes in degree zero. 
\end{Rem}

The main task of this section is to construct an exact functor $\D{\cat{U}} \to \K{\P{\cat{U}}}$. This will rely on the following construction.

\begin{Rec}\label{rec-projectiveresolution}
Recall that~\cite[Construction 5.6]{PolStrickland2022} gives a projective
resolution $P_*(X)$ for any $X\in\A{\cat{U}}$, which extends to a $k$-linear functor in $X$. 
For the benefit of the reader we recall the construction. 
Write $\cat{U}^\times$ for the subcategory of $\cat{U}$ with all
 objects but only the invertible morphisms, and
 $l\colon \cat{U}^\times\to\cat{U}$ for the inclusion. This gives an
 adjunction
 \[l_!\colon \A{\cat{U}^\times} \rightleftarrows l^*\noloc \A{\cat{U}}\]
 and so we have a counit map $\epsilon_X \colon l_!l^* X \to X$. We set  
 \begin{align*}
 P_0(X)&=l_!l^*(X) \\
 P_1(X)&=l_!l^*(\ker(P_0(X) \to X))\\
 P_{i+2}(X)&=l_!l^*(\ker(P_{i+1}(X) \to P_i(X))) \;\; \forall i\geq 0.
 \end{align*}
 We write $\delta$ for the map $P_{i+1}(X)\to P_i(X)$ appearing in the
 above construction, which is the differential in the complex
 $P_*(X)$.  One can check that if $\dim_k(X(G))<\infty$ for all $G$,
 then the same is true of $P_nX$. Moreover, \cite[Remark
 5.7]{PolStrickland2022} tells us that if $X(G)=0$ whenever $|G|<d$,
 then $(P_nX)(G)=0$ whenever $|G|<d+n$. Thus, after evaluation at any $G$ this becomes a bounded chain complex.
\end{Rec}

We now check one extra property of the functor $P_*$. 

\begin{Lem}\label{lem-P-exact} 
    The functor $P_*\colon \A{\cat{U}}\to\Ch(\A{\cat{U}})$ preserves finite limits and all colimits; in particular, it is exact.
\end{Lem}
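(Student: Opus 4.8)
The plan is to reduce everything to two functors: $P_0=l_!l^*$ and the syzygy functor $Z_0(X):=\ker(\epsilon_X\colon P_0(X)\to X)$, where $\epsilon$ is the counit of the adjunction $l_!\dashv l^*$. Since limits and colimits in $\A{\cat{U}}$ and in $\Ch(\A{\cat{U}})$ are computed pointwise, respectively degreewise, and a chain map that is a degreewise isomorphism is an isomorphism of complexes, it suffices to show that each functor $P_n\colon\A{\cat{U}}\to\A{\cat{U}}$ preserves finite limits and all colimits; exactness of $P_*$ is then automatic, an additive functor preserving finite limits and finite colimits being exact. Recall from \cref{rec-projectiveresolution} that $P_{n+1}(X)=P_0(Z_n(X))$, where $Z_n(X)=\ker(\delta\colon P_n(X)\to P_{n-1}(X))$ with the convention $P_{-1}=\id$; recall also that the augmented complex $P_*(X)\to X$ is exact, so that $\delta$ corestricts to a natural epimorphism $P_n\twoheadrightarrow Z_{n-1}$ and there are natural short exact sequences $0\to Z_n\to P_n\to Z_{n-1}\to 0$ for all $n\ge 0$ (reading $Z_{-1}=\id$).

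First I would treat $P_0=l_!l^*$. The restriction $l^*$ is exact and preserves all colimits, having a right adjoint $l_*$ (the right Kan extension along $l$). The functor $l_!$ preserves all colimits as a left adjoint, and it is exact: its source $\A{\cat{U}^\times}$ is a functor category over the groupoid $\cat{U}^\times$, so all of its objects are projective by \cite[Proposition 7.3]{PolStrickland2022}; hence every short exact sequence in $\A{\cat{U}^\times}$ splits, and any additive functor preserves split short exact sequences. Therefore $P_0$ preserves all colimits and is exact, and in particular preserves finite limits.

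Next I would analyse $Z_0$. As the kernel of a natural transformation $P_0\Rightarrow\id$ between finite-limit-preserving functors, $Z_0$ preserves finite limits. For colimits, the snake lemma applied to $\epsilon$ — using that $P_0$ is exact and each $\epsilon_X$ is an epimorphism — shows that $Z_0$ carries short exact sequences to short exact sequences, hence is exact and in particular preserves cokernels; moreover $Z_0$ preserves coproducts, since coproducts are exact in the Grothendieck category $\A{\cat{U}}$ and $P_0$ and $\id$ preserve them. As every colimit is a coequalizer of a map between coproducts, $Z_0$ preserves all colimits. The same snake-lemma argument applied to the natural short exact sequences $0\to Z_n\to P_n\to Z_{n-1}\to 0$, together with $P_{n+1}=P_0\circ Z_n$ and the closure of the class of functors preserving finite limits and all colimits under composition, proves by induction on $n$ that every $P_n$ and every $Z_n$ has this property; assembling degreewise then gives the lemma.

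The delicate point is the passage from finite limits to all colimits for the syzygy functors: the kernel of a natural transformation between exact functors need not be right exact, so one genuinely needs the ambient short exact sequences $0\to Z_n\to P_n\to Z_{n-1}\to 0$ together with the snake (equivalently, nine) lemma, and hence must know that $P_*(X)\to X$ is a genuine resolution — precisely the input recalled from \cite[Construction 5.6]{PolStrickland2022}. The remaining verifications are routine bookkeeping with pointwise and degreewise (co)limits.
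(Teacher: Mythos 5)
Your proof is correct and follows essentially the same route as the paper's: establish exactness and colimit-preservation of $P_0=l_!l^*$ via the splitting of short exact sequences over the groupoid $\cat{U}^\times$, propagate to the kernels of the counit by the snake lemma, and induct, with colimits handled via coproducts and cokernels. Your organisation through the syzygy functors $Z_n$ and the explicit justification that kernels commute with coproducts are only minor expository refinements of the paper's argument.
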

\begin{proof}
    Recall that $P_0(X)=l_!l^*(X)$, where $l\colon\cat{U}^\times\to\cat{U}$ is the inclusion.  It is clear that $l^*$ preserves all limits and colimits, and $l_!$ is a left adjoint so it preserves all colimits. It follows that $P_0$ also preserves all colimits.

 Consider a short exact sequence $X\xrightarrow{i}Y\xrightarrow{p}Z$
 in $\A{\cat{U}}$.  Then $l^*(X)\to l^*(Y)\to l^*(Z)$ is a short exact
 sequence in $\A{\cat{U}^\times}$, but $\cat{U}^\times$ is a
 groupoid so all short exact sequences in $\A{\cat{U}^\times}$ split 
 by \cite[Proposition 6.3]{PolStrickland2022}.  It follows that we
 still have a short exact sequence after applying $l_!$, so $P_0$ is
 exact.  We now have a diagram 
 \begin{center}
  \begin{tikzcd}
   \ker(\epsilon_X) \arrow[r] \arrow[d,rightarrowtail] & 
   \ker(\epsilon_Y) \arrow[r] \arrow[d,rightarrowtail] & 
   \ker(\epsilon_Z)           \arrow[d,rightarrowtail] \\
   P_0(X) \arrow[r,rightarrowtail]    \arrow[d,"\epsilon_X"',twoheadrightarrow] &
   P_0(Y) \arrow[r,twoheadrightarrow] \arrow[d,"\epsilon_Y",twoheadrightarrow] &
   P_0(Z)                             \arrow[d,"\epsilon_Z",twoheadrightarrow] \\
   X \arrow[r,rightarrowtail] &
   Y \arrow[r,twoheadrightarrow] &
   Z.
  \end{tikzcd}
 \end{center}
 We have seen that the middle row and the bottom row are short exact,
 so the snake lemma implies that the top row is also short
 exact.  The sequence $P_1(X)\to P_1(Y)\to P_1(Z)$ is obtained by
 applying $l_!l^*$ to the top row, so it is again short exact.  An
 evident inductive extension of this shows that the sequence
 $P_*(X)\to P_*(Y)\to P_*(Z)$ is a short exact sequence of chain
 complexes.  It follows in a standard way that $P_*$ preserves all
 finite limits and colimits: as $l^*$ and $l_!$
 preserve all coproducts, it follows easily that $P_*$ has the same
 property, and as arbitrary colimits can be built from coproducts and
 cokernels, they are also preserved by $P_*$.
\end{proof}
We next verify that the above functor extends to chain complexes. 
\begin{Cons}\label{cons-P}
 For any object $X\in\Ch(\A{\cat{U}})$, there are commutative
 squares 
 \begin{center}
  \begin{tikzcd}[row sep=1.1cm, column sep=1cm]
   P_i(X_j) \arrow[d,"\delta"'] \arrow[r,"P_i(d_j)"] & 
   P_i(X_{j-1}) \arrow[d,"\delta"] \\
   P_{i-1}(X_j) \arrow[r,"P_{i-1}(d_j)"'] &
   P_{i-1}(X_{j-1}).
  \end{tikzcd}
 \end{center}
 We can totalise this and consider the complex $P(X)$ with
 $P(X)_m=\bigoplus_{m=i+j}P_i(X_j)$, with the differential given by
 $\delta+(-1)^iP_i(d_j)$ on $P_i(X_j)$.  There is a chain epimorphism
 $P(X)\to X$ given by $\epsilon\colon P_0(X_j)\to X_j$ on $P_0(X_j)$, and by
 zero on $P_i(X_j)$ for $i>0$.  We will just write $\epsilon$ for this.

 Now suppose we have a chain map $f\colon X\to Y$.  This gives maps
 $P_i(f_j)\colon P_i(X_j)\to P_i(Y_j)$ and by taking a direct sum we get a
 map $P(f)\colon P(X)\to P(Y)$ of graded objects.  We claim that this is
 actually a chain map, or in other words that the map
 $\phi=d^{PY}\circ P(f) - P(f)\circ d^{PX}$ is zero.  Indeed, on
 $P_i(X_j)$ the map $\phi$ consists of a component
 $\phi'\colon P_i(X_j)\to P_{i-1}(Y_j)$ and a component
 $\phi''\colon P_i(X_j)\to P_i(Y_{j-1})$, which are given by 
 \begin{align*}
  \phi' &= \delta^{Y_j}_i\circ P_i(f_j) - P_{i-1}(f_j)\circ\delta^{X_j}_i;\\
  \phi'' &= (-1)^iP_i(d^Y_j)\circ P_i(f_j) - 
            P_i(f_{j-1})\circ (-1)^iP_i(d^X_j).
 \end{align*}
 Naturality of $\delta$ implies that $\phi'=0$.  Functoriality of
 $P_i$, together with the fact that $f$ is a chain map, implies that
 $\phi''=0$, so $\phi=0$ as claimed.  Thus, $P$ gives a functor
 $\Ch(\A{\cat{U}})\to\Ch(\P{\cat{U}})$.
\end{Cons}

\begin{Rem}\label{rem-epsilon-qi}
 As $H_*(P_*(X_j))\cong H_0(P_*(X_j))\cong X_j$, the standard spectral sequence
 for a double complex collapses to give $H_*(P(X))\cong H_*(X)$.  More
 precisely, the map $\epsilon\colon P(X)\to X$ is a quasi-isomorphism.  
\end{Rem}

\begin{Rem}\label{rem-P-Gr}
 The graded object underlying $P(X)$ depends only on the graded object
 underlying $X$.  Thus, we have a commutative diagram 
 \begin{center}
  \begin{tikzcd}
   \Ch(\A{\cat{U}}) \arrow[r,"P"] \arrow[d,"\text{forget}"'] &
   \Ch(\P{\cat{U}}) \arrow[d,"\text{forget}"] \\
   \Gr(\A{\cat{U}}) \arrow[r,"P"'] & 
   \Gr(\P{\cat{U}}).
  \end{tikzcd}
 \end{center}
\end{Rem}

\begin{Prop}\label{prop-P-enriched}
 The functor $P\colon \Ch(\A{\cat{U}})\to\Ch(\P{\cat{U}})$ has a natural
 enrichment over $\Ch(\Ab)$.  In more detail, recalling \cref{rec-internal_homs}, for
 $X,Y\in\Ch(\A{\cat{U}})$ there is a morphism  
 \[ P \colon  \Hom(X,Y) \to \Hom(P(X),P(Y)) \]
 in $\Ch(\Ab)$ which is compatible with identities and composition, and
 whose effect on $\Hom_{\Ch(\A{\cat{U}})}(X,Y)=Z_0(\Hom(X,Y))$ is as
  given in \cref{cons-P}.
\end{Prop}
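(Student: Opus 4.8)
The plan is to define $P$ explicitly on morphism complexes and then verify the required compatibilities by a sign-chase that mirrors, and slightly refines, the one already carried out in \cref{cons-P}. Recall from \cref{rec-internal_homs} that an element $\phi\in\Hom(X,Y)_n$ is a family of maps $\phi_j\colon X_j\to Y_{j+n}$ in $\A{\cat U}$, not required to commute with differentials. Given such a $\phi$, I would define $P(\phi)\colon P(X)\to P(Y)$ to be the graded map whose restriction to the summand $P_i(X_j)\subseteq P(X)_{i+j}$ is
\[
    (-1)^{ni}\,P_i(\phi_j)\colon P_i(X_j)\to P_i(Y_{j+n})\subseteq P(Y)_{i+j+n},
\]
the sign being dictated by the Koszul rule, consistently with the factor $(-1)^i$ appearing in the differential of $P(X)$ in \cref{cons-P}. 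Since a map out of a direct sum is determined by its restrictions to the summands, this is well defined, and for $n=0$ it recovers the formula for $P(f)$ given in \cref{cons-P}; so the claim about the effect of $P$ on $Z_0\Hom(X,Y)=\Hom_{\Ch(\A{\cat U})}(X,Y)$ will be immediate once the rest is established.

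The main point is then to check that $P\colon\Hom(X,Y)\to\Hom(P(X),P(Y))$ is a chain map, i.e.\ that it commutes with the internal hom differentials $\partial(\xi)=d\circ\xi-(-1)^{|\xi|}\,\xi\circ d$. Writing out $\partial(P(\phi))$ on a summand $P_i(X_j)$, the differential $d^{P(X)}$ contributes a component $\delta\colon P_i(X_j)\to P_{i-1}(X_j)$ and a component $(-1)^i P_i(d^X_j)\colon P_i(X_j)\to P_i(X_{j-1})$, and likewise for $d^{P(Y)}$. Naturality of $\delta$ as a transformation $P_i\Rightarrow P_{i-1}$ forces the two $\delta$-components of $\partial(P(\phi))$ to cancel, so that $\partial(P(\phi))$ lands in the single summand $P_i(Y_{j+n-1})$, exactly where $P(\partial\phi)$ lands; and functoriality of each $P_i$ together with the bookkeeping of the signs $(-1)^{ni}$, $(-1)^i$, $(-1)^n$ identifies this remaining component with $P(\partial\phi)|_{P_i(X_j)}$, using only the trivial identity $(-1)^{(n+1)i}=(-1)^{(n-1)i}$. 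This sign-chase, which is essentially the graded refinement of the verification in \cref{cons-P} that $P(f)$ is a chain map, is the main (and in fact the only) obstacle; everything else is formal.

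Finally, compatibility with identities is clear since $P_i(\id_{X_j})=\id_{P_i(X_j)}$ and the sign is $+1$ in degree $0$. For composition, given $\phi\in\Hom(X,Y)_m$ and $\psi\in\Hom(Y,Z)_n$, on the summand $P_i(X_j)$ the composite $P(\psi)\circ P(\phi)$ equals $(-1)^{(m+n)i}\,P_i(\psi_{j+m})\circ P_i(\phi_j)=(-1)^{(m+n)i}\,P_i(\psi_{j+m}\circ\phi_j)$ by functoriality of $P_i$, which is precisely $P(\psi\circ\phi)$ on that summand; the Koszul signs multiply correctly and no further signs intervene. Conceptually, one may phrase all of this by factoring $P$ as the degreewise application of the additive functor $P_\bullet\colon\A{\cat U}\to\Ch(\A{\cat U})$ from \cref{rec-projectiveresolution}, which is automatically $\Ch(\Ab)$-enriched, followed by totalisation of bicomplexes, which is likewise $\Ch(\Ab)$-enriched; but since the explicit computation above is short, I would simply carry it out.
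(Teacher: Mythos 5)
Your proposal is correct and follows essentially the same route as the paper: the same formula $(-1)^{ni}P_i(\phi_j)$ on the summand $P_i(X_j)$, the same splitting of $\partial(P(\phi))$ into a $\delta$-component killed by naturality and a remaining component identified with $P(\partial\phi)$ via functoriality of $P_i$ and the identity $(-1)^{(n+1)i}=(-1)^{(n-1)i}$. The only difference is that you spell out the composition check (and sketch the totalisation-of-bicomplexes viewpoint), whereas the paper declares compatibility with composition and identities to be clear.
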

\begin{proof} 
 An element $f\in\Hom(X,Y)_n$ consists of a system of morphisms
 $f_j\colon X_j\to Y_{j+n}$ in $\A{\cat{U}}$ (see \cref{rec-internal_homs}).  The differential in $\Hom(X,Y)$ is given by 
 \[ d(f)_j = d^Y_{j+n}\circ f_j - (-1)^n f_{j-1}\circ d^X_j. \]
 Let $P(f)\in\Hom(P(X),P(Y))_n$ be the map given on $P_i(X_j)$ by
 \[ (-1)^{ni}P_i(f_j)\colon P_i(X_j)\to P_i(Y_{j+n}). \]  
 We will show that 
 $d(P(f))=P(d(f))$, by essentially the same argument that we used to
 prove that $P$ preserves chain maps.  On $P_i(X_j)$, the map
 $\phi=d(P(f))$ consists of a component 
 $\phi'\colon P_i(X_j)\to P_{i-1}(Y_{j+n})$ and a component 
 $\phi''\colon P_i(X_j)\to P_i(Y_{j+n-1})$, which are given by 
 \begin{align*}
  \phi' &= \delta^{Y_{j+n}}_i\circ (-1)^{ni}P_i(f_j) - 
           (-1)^n\cdot (-1)^{n(i-1)}P_{i-1}(f_j)\circ\delta^{X_j}_i \\
  \phi'' &= (-1)^iP_i(d^Y_{j+n})\circ(-1)^{ni}P_i(f_j) - 
           (-1)^n\cdot (-1)^{ni}P_i(f_{j-1})\circ(-1)^iP_i(d^X_j).
 \end{align*}
 Naturality of $\delta$ implies that $\phi'=0$.  Functoriality of $P_i$
 gives 
 \[ \phi'' = (-1)^{(n+1)i} 
     P_i\left(d^Y_{n+j}\circ f_j-(-1)^nf_{j-1}\circ d^X_j\right) =
     (-1)^{(n+1)i}P_i(d(f)_j).
 \]
 Because $d(f)\in\Hom(X,Y)_{n-1}$ and $(-1)^{(n+1)i}=(-1)^{(n-1)i}$ we
 see that $\phi''$ is the restriction of $P(d(f))$ to $P_i(X_j)$. This shows that $P$ is indeed a map of chain complexes and it is clear that it is compatible with compositions and identities. 
\end{proof}

Recall that the homotopy category of chain complexes has Hom-sets given by taking the zero homology of the internal hom objects, see \cref{def-K(B)}. Therefore as an application of the previous result we obtain the following.

\begin{Cor}\label{cor:pexists}
 If $f\colon X\to Y$ is nullhomotopic, then so is $P(f)\colon P(X)\to P(Y)$.
 Thus, $P$ induces a functor $\K{\A{\cat{U}}}\to\K{\P{\cat{U}}}$. \qed
\end{Cor}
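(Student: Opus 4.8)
The plan is to deduce this directly from the enrichment established in \cref{prop-P-enriched}, with essentially no further work required. First I would unwind what nullhomotopy means internally: by \cref{rec-internal_homs} and the remark immediately following it, a chain map $f\colon X\to Y$ is the same thing as an element of the cycle group $Z_0\Hom(X,Y)$, and it is nullhomotopic precisely when it lies in the boundary subgroup $B_0\Hom(X,Y)$, i.e.\ when $f=d(h)$ for some $h\in\Hom(X,Y)_1$.

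Next I would apply the chain map $P\colon \Hom(X,Y)\to\Hom(P(X),P(Y))$ supplied by \cref{prop-P-enriched}. Since $P$ commutes with the differentials, if $f=d(h)$ then $P(f)=P(d(h))=d(P(h))$, so $P(f)$ is a boundary in $\Hom(P(X),P(Y))$ and hence is a nullhomotopic chain map $P(X)\to P(Y)$. Here I would invoke the last clause of \cref{prop-P-enriched}, which guarantees that the effect of $P$ on $Z_0\Hom(X,Y)$ agrees with the map $P(f)$ of \cref{cons-P}, so there is no ambiguity about which morphism $P(X)\to P(Y)$ we are asserting to be nullhomotopic.

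Finally, passing to $H_0$ of these internal hom complexes, the chain map $P$ induces a homomorphism $H_0\Hom(X,Y)\to H_0\Hom(P(X),P(Y))$, which by \cref{def-K(B)} is exactly a map $\Hom_{\K{\A{\cat{U}}}}(X,Y)\to\Hom_{\K{\P{\cat{U}}}}(P(X),P(Y))$; compatibility with identities and composition is inherited from the corresponding clause of \cref{prop-P-enriched}, so these maps assemble into the desired functor $\K{\A{\cat{U}}}\to\K{\P{\cat{U}}}$. The whole argument is formal once \cref{prop-P-enriched} is available, so there is no genuine obstacle; the only point needing a moment's care is matching the ``effect on $Z_0$'' statement of \cref{prop-P-enriched} with the naive definition of $P(f)$, so that the induced functor really does extend the construction of \cref{cons-P}.
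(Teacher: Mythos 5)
Your argument is exactly the paper's intended proof: the corollary is stated with an immediate \qed because, as the sentence preceding it notes, $\Hom_{\K{\cat B}}(X,Y)=H_0\Hom(X,Y)$, so the chain map $P\colon\Hom(X,Y)\to\Hom(P(X),P(Y))$ from \cref{prop-P-enriched} sends boundaries to boundaries and induces the functor on homotopy categories. Your care in matching the effect on $Z_0$ with the construction of \cref{cons-P} is precisely the point the last clause of \cref{prop-P-enriched} is there to settle, so the proposal is correct and essentially identical to the paper's reasoning.
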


\begin{Prop}\label{prop:pproperties}\leavevmode
Let $\cat U$ be as in \cref{hyp-basic}.
 \begin{enumerate}
  \item $P\colon\Ch(\A{\cat{U}})\to\Ch(\P{\cat{U}})$ 
   preserves quasi-isomorphisms.
  \item $P\colon\Ch(\A{\cat{U}})\to\Ch(\A{\cat{U}})$
   preserves finite limits and all colimits. 
  \item $P\colon\K{\A{\cat{U}}}\to\K{\A{\cat{U}}}$ 
   preserves exact triangles and all coproducts.
 \end{enumerate}
\end{Prop}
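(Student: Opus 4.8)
The plan is to handle the three items in turn, each by reducing to a property of $P$ already in hand. For \textbf{(1)}: by \cref{rem-epsilon-qi} the counit $\epsilon\colon P(X)\to X$ is a quasi-isomorphism, and it is natural in $X$ by \cref{cons-P}. So for any $f\colon X\to Y$ the square with rows $P(f),f$ and columns $\epsilon_X,\epsilon_Y$ commutes; if $f$ is a quasi-isomorphism, then $\epsilon_Y\circ P(f)=f\circ\epsilon_X$ is a composite of quasi-isomorphisms, and since $\epsilon_Y$ is a quasi-isomorphism the two-out-of-three property forces $P(f)$ to be one as well.

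For \textbf{(2)}: by \cref{cons-P} we have $P(X)_m=\bigoplus_{i+j=m}P_i(X_j)$, with differential assembled from $\delta$ and the $P_i(d_j)$, and the graded object underlying $P(X)$ depends only on that of $X$ (\cref{rem-P-Gr}). Each functor $P_i\colon\A{\cat U}\to\A{\cat U}$ preserves finite limits and all colimits: this is exactly what the proof of \cref{lem-P-exact} establishes, since it checks exactness of $P_0$ and preservation of coproducts and then extends this inductively to every $P_i$. Since $\A{\cat U}$ is Grothendieck (\cref{prop-ebullet}), hence AB5 and so AB4, coproducts in $\A{\cat U}$ are exact; therefore the (possibly infinite) functor $\bigoplus_{i+j=m}(-)$ commutes with finite limits, and it trivially commutes with all colimits. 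As finite limits and all colimits of chain complexes are formed degreewise, $P$ preserves finite limits and all colimits.

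For \textbf{(3)}: the statement about coproducts is then formal, since coproducts in $\K{\A{\cat U}}$ are computed by coproducts of chain complexes, which $P$ preserves by (2), and $P$ descends to the homotopy category by \cref{cor:pexists}; so the comparison map $\bigoplus_\alpha P(X^\alpha)\to P\bigl(\bigoplus_\alpha X^\alpha\bigr)$ is an isomorphism in $\K{\A{\cat U}}$. For exact triangles, since every exact triangle in $\K{\A{\cat U}}$ is isomorphic to a standard mapping-cone triangle $X\xrightarrow{f}Y\xrightarrow{\iota}C(f)\xrightarrow{\pi}\Sigma X$ and $P$ preserves isomorphisms, it suffices to produce a natural isomorphism $P(C(f))\cong C(P(f))$ carrying $P(\iota),P(\pi)$ to the structure maps of $C(P(f))$. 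By \cref{rem-P-Gr} and additivity of $P$, the graded object underlying $P(C(f))$ is canonically $P(Y)\oplus\Sigma P(X)$, which is the graded object underlying $C(P(f))$, and under this identification $P(\iota),P(\pi)$ are the evident inclusion and projection. What remains is to match the totalization differential on $P(C(f))$ from \cref{cons-P} with the mapping-cone differential on $C(P(f))$: writing both out and using that $P$ is a DG-functor (\cref{prop-P-enriched}) — so that it carries the off-diagonal term $f$ of $d_{C(f)}$ to the off-diagonal term $P(f)$ of $d_{C(P(f))}$ — reduces this to a check of Koszul signs. Together with $P(\Sigma X)=\Sigma P(X)$, this shows $P$ sends standard triangles to standard triangles, hence preserves exact triangles.

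The one step that is not purely formal is this last verification: that the totalization differential of \cref{cons-P} applied to $C(f)$ reproduces the mapping-cone differential of $C(P(f))$ on the nose — rather than up to a sign which, though still yielding an isomorphism of triangles, would demand an extra cosmetic argument. The sign bookkeeping here is the main (and essentially only) obstacle; everything else follows from properties of $P$ already recorded.
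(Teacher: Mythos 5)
Items (a) and (b) and the coproduct claim in (c) are handled exactly as in the paper: the two-out-of-three argument via the natural quasi-isomorphism $\epsilon\colon P(X)\to X$, the reduction of exactness to \cref{lem-P-exact} (the paper simply cites that lemma; your extra remarks about totalisation and exactness of coproducts are correct but not needed beyond it), and the observation that coproducts in $\K{\A{\cat{U}}}$ are computed in $\Ch(\A{\cat{U}})$.

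For the preservation of exact triangles, however, your route differs from the paper's and has a genuine gap, one you yourself flag. You reduce to producing an isomorphism of complexes $P(C(f))\cong C(P(f))$ compatible with the structure maps, and defer the verification that the totalisation differential of \cref{cons-P} applied to $C(f)$ agrees with the mapping-cone differential of $C(P(f))$. It does not agree on the nose: on the summand $P_i(X_{j-1})\subseteq P(C(f))_{i+j}$ the totalisation differential contributes $+\delta$ and $(-1)^iP_i(f_{j-1})$, whereas the mapping-cone differential of $C(P(f))$ contributes $-\delta$ and $P_i(f_{j-1})$ on the corresponding summand of $\Sigma P(X)$. So one must insert a diagonal sign automorphism (something like $(-1)^{\binom{i}{2}}$ or $(-1)^i$ on $P_i(X_{j-1})$, chosen to fix both discrepancies simultaneously) and then re-check compatibility with $P(\iota)$ and $P(\pi)$ — this is precisely the content you have not supplied, and it is not merely cosmetic: without it you have not exhibited an isomorphism of triangles. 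The paper sidesteps all of this by using the characterisation of distinguished triangles in \cref{rem-triangulation}: every distinguished triangle is quasi-isomorphic to a \emph{special} one coming from a degreewise split short exact sequence $X\xrightarrow{f}Y\xrightarrow{g}Z$ with graded splittings $u,v$. Since $P$ is exact in the sense of (b) and is also a functor on graded objects (\cref{rem-P-Gr}), applying $P$ yields a short exact sequence $P(X)\to P(Y)\to P(Z)$ together with graded splittings $P(u),P(v)$ satisfying the same identities, so the image is again a special triangle; combined with (a), $P$ then preserves all distinguished triangles. I would recommend either adopting that argument or completing the sign bookkeeping explicitly — as written, the exact-triangle claim is not proved.
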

\begin{proof}\leavevmode
 \begin{itemize}
  \item[(a)] For any $f\colon X\to Y$ we have a commutative diagram 
   \begin{center}
    \begin{tikzcd}
     P(X) \arrow[d,"\epsilon"'] \arrow[r,"P(f)"] &
     P(Y) \arrow[d,"\epsilon"] \\
     X \arrow[r,"f"'] &
     Y
    \end{tikzcd}
   \end{center}
   in which the vertical maps are quasi-isomorphisms.  It follows
   immediately that $P(f)$ is a quasi-isomorphism if and only if $f$ is a
   quasi-isomorphism. 
  \item[(b)] This is immediate from \cref{lem-P-exact}.  
  \item[(c)] Let us say that a distinguished triangle
   $X\xrightarrow{f}Y\xrightarrow{g}Z\xrightarrow{h}\Sigma X$ is
   special if it has the form considered in
   \cref{rem-triangulation}, so $h=-udv$ for 
   some morphisms $X\xleftarrow{u}Y\xleftarrow{v}Z$ of graded objects
   with $uf=1_X$ and $fu+vg=1_Y$ and $gv=1_Z$.  We then have morphisms 
   $P(X)\xrightarrow{P(f)}P(Y)\xrightarrow{P(g)}P(Z)\xrightarrow{P(h)}\Sigma P(X)$ 
   of chain complexes and morphisms 
   $P(X)\xleftarrow{P(u)}P(Y)\xleftarrow{P(v)}P(Z)$ of graded objects
   with $P(u)P(f)=1_{P(X)}$ and $P(f)P(u)+P(v)P(g)=1_{P(Y)}$ and
   $P(g)P(v)=1_{P(Z)}$, so we see that $P$ preserves special 
   triangles.  Moreover, every distinguished triangle is linked by a system of
   quasi-isomorphisms to a special one, and $P$ preserves
   quasi-isomorphisms, so $P$ also preserves distinguished triangles. The claim about coproducts follows from (b) and the fact that coproducts in the homotopy category are constructed in chain complexes.\qedhere
 \end{itemize}
\end{proof}

\begin{Rem}\label{rem-sym-monoidal}
 We note that by
 \cref{prop-proj-closed-under-tensor} there is a non-unital symmetric monoidal
 structure on $\K{\P{\cat U}}$ making the inclusion
 $\K{\P{\cat U}} \to \K{\A{\cat U}}$ a non-unital symmetric
 monoidal functor. If $\cat U$ is such that $\unit \in \P{\cat{U}}$, then $\K{\P{\cat U}}$ is symmetric monoidal and the functor $\K{\P{\cat U}} \to \K{\A{\cat U}}$ refines to a symmetric monoidal functor. For
 example, this holds if $1 \in \cat{U}$, or see \cref{prop-minimal}
 for a characterisation of when $\unit$ is projective. 
\end{Rem}

We are finally ready to prove the main result of this section.

\begin{Thm}\label{thm-derived-category-equal-K-proj}
 Under \cref{hyp-basic}, the functor $P$ induces a non-unital symmetric monoidal equivalence of triangulated categories
 \[ \D{\cat{U}} \simeq \K{\P{\cat{U}}}. \]
 If furthermore $\unit \in \A{\cat U}$ is projective
 (\cref{prop-minimal}), then the above equivalence refines to a
 symmetric monoidal equivalence.  
\end{Thm}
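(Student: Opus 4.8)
The plan is to obtain the equivalence from the functor $P$ in one direction and, in the other direction, from the composite $\overline{\iota}\coloneqq\gamma\circ j$ of the inclusion $j\colon\K{\P{\cat{U}}}\hookrightarrow\K{\A{\cat{U}}}$ with the localization functor $\gamma\colon\K{\A{\cat{U}}}\to\D{\cat{U}}$, and then to show these are mutually inverse using the natural quasi-isomorphism $\epsilon\colon P(X)\to X$. First I would produce the functor out of $\D{\cat{U}}$: by \cref{cor:pexists} the construction $P$ descends to $P\colon\K{\A{\cat{U}}}\to\K{\P{\cat{U}}}$, and by \cref{prop:pproperties}(a) it sends quasi-isomorphisms in $\Ch(\A{\cat{U}})$ to quasi-isomorphisms in $\Ch(\P{\cat{U}})$. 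Since a quasi-isomorphism in $\Ch(\P{\cat{U}})$ is a homotopy equivalence by \cref{cor-cofibrant-alt}, hence an isomorphism in $\K{\P{\cat{U}}}$, the functor $P$ inverts quasi-isomorphisms, and so by the universal property of the localization $\D{\cat{U}}=\K{\A{\cat{U}}}[\mathrm{q.iso}^{-1}]$ it factors uniquely through an exact functor $\overline{P}\colon\D{\cat{U}}\to\K{\P{\cat{U}}}$; exactness is immediate since $\gamma$ is exact and the identity on objects, while $P$ is exact by \cref{prop:pproperties}(c).

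Next I would verify that $\overline{P}$ and $\overline{\iota}$ are mutually inverse. For any $X\in\Ch(\A{\cat{U}})$, the map $\epsilon_X\colon P(X)\to X$ is a quasi-isomorphism by \cref{rem-epsilon-qi}, hence an isomorphism in $\D{\cat{U}}$; as $\gamma$ is the identity on objects, this assembles into a natural isomorphism $\overline{\iota}\,\overline{P}\cong\id_{\D{\cat{U}}}$. Conversely, if $X\in\Ch(\P{\cat{U}})$ then both $P(X)$ and $X$ lie in $\Ch(\P{\cat{U}})$, so \cref{cor-cofibrant-alt} upgrades $\epsilon_X$ to a homotopy equivalence, yielding a natural isomorphism $\overline{P}\,\overline{\iota}\cong\id_{\K{\P{\cat{U}}}}$. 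Thus $\overline{P}$ is an equivalence of triangulated categories, with quasi-inverse $\overline{\iota}$.

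Finally, it remains to account for the monoidal structures. Since $\P{\cat{U}}$ is closed under the pointwise tensor product of $\A{\cat{U}}$ (\cref{prop-proj-closed-under-tensor}), the inclusion $j$ is non-unital strong symmetric monoidal (\cref{rem-sym-monoidal}), and $\gamma$ is strong symmetric monoidal because the tensor product preserves quasi-isomorphisms (\cref{lem-flat}); hence $\overline{\iota}$ is a non-unital strong symmetric monoidal equivalence, and its quasi-inverse $\overline{P}$ inherits a canonical non-unital symmetric monoidal structure compatible with the isomorphisms above. If moreover $\unit\in\P{\cat{U}}$ — equivalently $\unit\in\A{\cat{U}}$ is projective, see \cref{prop-minimal} — then $j$, and therefore $\overline{\iota}$ and $\overline{P}$, are genuinely unital, which gives the refined symmetric monoidal equivalence. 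I do not expect a serious obstacle here: essentially all the content sits in the earlier results, the only slightly delicate point being the coherence bookkeeping for the (non-)unital symmetric monoidal structure, which is formal once \cref{rem-sym-monoidal} and \cref{lem-flat} are available; the genuinely substantive inputs are that $P$ inverts quasi-isomorphisms (\cref{cor-cofibrant-alt}) and that $\epsilon$ provides the natural isomorphisms (\cref{rem-epsilon-qi}).
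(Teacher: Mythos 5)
Your proposal is correct and follows essentially the same route as the paper: descend $P$ to $\D{\cat{U}}$ via \cref{cor:pexists} and \cref{prop:pproperties}, use the inclusion $\K{\P{\cat{U}}}\to\K{\A{\cat{U}}}$ (composed with localization) as the inverse, and transfer the (non-unital) symmetric monoidal structure from the inclusion via \cref{rem-sym-monoidal}. The paper simply asserts that the two functors are inverse, whereas you spell out the two natural isomorphisms via $\epsilon$ and \cref{cor-cofibrant-alt}; this is the intended argument.
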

\begin{proof}
 The functor $P \colon \K{\A{\cat U}} \to \K{\P{\cat U}}$ from \cref{cor:pexists} sends quasi-iso-morphisms to isomorphisms (see \cref{prop:pproperties}),
 and so induces a functor $\D{\cat{U}} \to \K{\P{\cat{U}}}$, which we
 still call $P$.  The inclusion $\K{\P{\cat{U}}}\to\K{\A{\cat{U}}}$
 induces a functor $J$ in the opposite direction and these are
 inverse. Note that the functor $J$
 is (possibly non-unital) symmetric monoidal by \cref{rem-sym-monoidal}, so the 
 inverse functor $P$ is also (possibly non-unital) symmetric monoidal.
\end{proof}
\begin{Rem}
 Some readers will prefer to work in the context of quasicategories  so we can make a quasicategorical formulation of essentially the same result as follows.
 As $\Ch(\A{\cat{U}})$ is enriched over $\Ch(\Ab)$, it can be regarded
 as a dg category.  The dg nerve construction of~\cite[Construction
 1.3.1.6]{HA} converts dg categories to quasicategories.  As $P$ is
 an enriched functor, it induces a functor 
 $N_{\dg}(\Ch(\A{\cat{U}}))\to N_{\dg}(\Ch(\P{\cat{U}}))$ of
 quasicategories, and similarly for $J$. The claim is that the functor $P\colon N_{\dg}(\Ch(\A{\cat{U}}))\to N_{\dg}(\Ch(\P{\cat{U}}))$ is a
 quasicategorical localization with respect to the quasi-isomorphisms.
 For a more detailed statement, consider another quasicategory
 $\cat{C}$, and let $\Fun_{qi}(N_{\dg}(\Ch(\A{\cat{U}})),\cat{C})$ be the full subcategory
 of $\Fun(N_{\dg}(\Ch(\A{\cat{U}})),\cat{C})$ consisting of the functors that send
 quasi-isomorphisms to isomorphisms.  The claim is then that 
 \[ P^*\colon \Fun(N_{\dg}(\Ch(\P{\cat{U}})),\cat{C})\to \Fun_{qi}(N_{\dg}(\Ch(\A{\cat{U}})),\cat{C}) \]
 is an equivalence of quasicategories.  Indeed, we have a functor 
 $J^*$ in the opposite direction, and a natural isomorphism
 $PJ\simeq 1$, and a natural quasi-isomorphism $JP\to 1$; the claim
 follows easily from these facts.
\end{Rem}

\section{Thin complexes}
\label{sec-thin}

In this section we show that any object in the derived category
$\D{\cat{U}}$ is equivalent to a complex of projective objects whose
differentials satisfy a special condition. The existence of such
replacements allows one to interpolate efficiently between the abelian
category $\A{\cat{U}}$ and its derived category $\D{\cat{U}}$.

We first introduce the special type of complexes we will consider in this section. To this end, recall the filtration from \cref{cons-filtration}.
 
\begin{Def}\label{defn-thin}
 We say that $X\in \Ch(\P{\cat U})$ is \emph{thin} if $d(L_{\leq s}X)\leq L_{<s}X$ for all $s$. Note that in this case the quotient complexes
 $L_sX=L_{\leq s}X/L_{<s}X$ have zero differential.  
\end{Def}

We now work towards proving that any complex of objects in $\A{\cat{U}}$ has a thin replacement which is unique up to isomorphism of complexes. 
\begin{Def}\label{defn-thin-replacement}
 Consider a complex $X\in\Ch(\A{\cat{U}})$.  A \emph{thin replacement} of
 $X$ consists of a thin complex $U\in\Ch(\P{\cat{U}})$ and a
 quasi-isomorphism $u\colon U\to X$. 
\end{Def}

\begin{Prop}\label{prop-thin}
 Let $X\in\Ch(\P{\cat U})$ be a complex of projective objects. Then 
 we can choose a decomposition of chain complexes $X\cong X'\oplus X''$ where $X'$ is thin and $X''$ is contractible. 
 In particular, $X$ is homotopy equivalent to the thin complex
 $X'$.  
\end{Prop}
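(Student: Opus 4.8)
The plan is to build the thin summand $X'$ by descending induction on the filtration degree, peeling off a contractible complement at each stage. Recall from \cref{rem-filt} that, since $X$ is a complex of projectives, each graded subquotient $L_sX$ is $s$-pure and there is a (non-canonical) splitting $X \cong \bigoplus_s L_sX$ as graded objects; moreover by \cref{prop-pure-split} every $s$-pure complex splits as a complex with zero differential plus a contractible complex. The key observation is that the differential of $X$ \emph{lowers} the filtration by the bottom inclusion $L_{\leq s}X \hookrightarrow X$ only as far as $L_{\leq s}X$ itself (it preserves the filtration), and we want to arrange, after replacing $X$ by a homotopy equivalent complex, that it drops strictly, i.e.\ $d(L_{\leq s}X)\subseteq L_{<s}X$.

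First I would set up the induction carefully. Write $X$ as an extension of $L_sX$ by $L_{<s+1}X = L_{\le s}X$; more precisely, the filtration gives short exact sequences of complexes $0\to L_{<s}X \to L_{\le s}X \to L_sX\to 0$, each of which is chainwise split because $L_sX$ is degreewise projective (indeed $s$-pure). Using \cref{rem-triangulation}, such a split short exact sequence is a distinguished triangle in $\K{\A\cat U}$, and the connecting map $L_sX \to \Sigma L_{<s}X$ is what measures the failure of thinness "at level $s$". The strategy is: working from the top filtration level downward (the filtration is exhausted degreewise, so for each fixed $X_n$ only finitely many $L_sX_n$ are nonzero, which makes the induction well-founded in each degree), modify the chosen graded splitting so that the differential of $X$ restricted to $L_{\le s}X$ actually lands in $L_{<s}X$. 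Since $L_sX$ is $s$-pure, \cref{prop-pure-split} lets us further split $L_sX \cong (L_sX)' \oplus (L_sX)''$ with $(L_sX)'$ having zero differential and $(L_sX)''$ contractible; the contractible pieces get collected into $X''$ while the zero-differential pieces, suitably reassembled along the (now strictly filtration-decreasing) differential, form the thin complex $X'$.

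Concretely, I would argue as follows. Choose a graded splitting $X \cong \bigoplus_s Y_s$ with $Y_s$ an $s$-pure graded object isomorphic to $L_sX$, so that the differential $d$ of $X$, written in block form with respect to this decomposition, has components $d_{s\to t}\colon Y_s \to Y_t$ that vanish for $t>s$ (filtration-preserving) and with $d_{s\to s}$ the induced differential on $L_sX$. Apply \cref{prop-pure-split} to the $s$-pure complex $(Y_s, d_{s\to s})$ to split off its contractible part; absorbing all contractible parts and the maps into and out of them requires a standard change-of-basis argument, replacing the splitting $\bigoplus Y_s$ by a homotopy-equivalent one in which the diagonal blocks $d_{s\to s}$ all vanish. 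This is exactly the statement that $X'$ (the complement of the assembled contractible subcomplex $X''$) satisfies $d(L_{\le s}X') \subseteq L_{<s}X'$, i.e.\ $X'$ is thin. The final sentence, that $X$ is homotopy equivalent to $X'$, is then immediate since $X''$ is contractible.

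The main obstacle I anticipate is the bookkeeping in the change-of-basis step: $d$ is only filtration-\emph{preserving}, not filtration-\emph{strict}, and eliminating the diagonal blocks $d_{s\to s}$ via \cref{prop-pure-split} while tracking how this interacts with the strictly-lower off-diagonal blocks $d_{s\to t}$ ($t<s$) needs to be done coherently across all $s$ at once. The cleanest way to handle this is probably to phrase it as: "$X$ admits an exhaustive filtration by subcomplexes $L_{\le s}X$ whose subquotients are $s$-pure, hence split (zero differential $\oplus$ contractible); a standard inductive argument then produces a splitting of $X$ itself into a thin part and a contractible part," citing \cref{prop-pure-split} and the splitting of degreewise-projective extensions from \cref{rem-triangulation}, and leaving the (routine but tedious) diagram chase to the reader — which is consistent with the style of the surrounding text.
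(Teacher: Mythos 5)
Your outline follows the same route as the paper's proof: split each $s$-pure subquotient $L_sX$ into a zero-differential piece $U(s)$ and a contractible piece $V(s)$ via \cref{prop-pure-split}, collect the contractible pieces into $X''$ and the rest into a thin $X'$. But two points in your plan do not survive scrutiny. First, the induction cannot run ``from the top filtration level downward'': the filtration $0=L_{\leq 0}X\subseteq L_{\leq 1}X\subseteq\dotsb$ is increasing and exhaustive but has no top in general, and your well-foundedness claim that for fixed $n$ only finitely many $L_sX_n$ are nonzero is false (take $X_n=\bigoplus_{G\in\cat{U}}e_G$). The correct induction is ascending, establishing $L_{\leq s}X\cong U(s)\oplus V(s)\oplus L_{<s}X$ at each stage and then passing to the colimit.

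Second, the ``standard change-of-basis'' you defer to the reader is exactly where the content lies, and \cref{rem-triangulation} is not the right tool: that remark produces a distinguished triangle in $\K{\A{\cat U}}$, hence at best a homotopy equivalence $X\simeq X'\oplus X''$, whereas the proposition asserts an isomorphism of chain complexes. Concretely, one must (i) lift each $V(s)$ to an honest subcomplex of $L_{\leq s}X$, which uses that a contractible complex of projectives is projective in $\Ch(\A{\cat U})$ (\cref{cor-acyc-proj}, \cref{cor-contr-proj}), while $U(s)$ can only be lifted as a graded subobject with $d(U(s))\subseteq L_{<s}X$; and then (ii) split the short exact sequence of complexes $V\to X\to X/V$, where $V=\bigoplus_sV(s)$ and $X/V$ is thin. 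The paper does (ii) by observing that $\Ext^1_{\Ch(\A{\cat U})}(X/V,V)=0$ because $X/V$ is a complex of projectives and $V$ is acyclic (\cref{cor-no-ext}, which rests on \cref{thm-cofibrant}); alternatively one can split it by an explicit change of basis using a contraction of $V$. Some such argument must be supplied — as written, your proof delivers only the homotopy equivalence in the last sentence of the statement, not the claimed decomposition of chain complexes.
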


 \begin{proof}
 For any $s\geq 1$ we have a quotient map $q_s\colon L_{\leq s}X\to L_sX$.
 By \Cref{prop-pure-split}, we can choose a splitting
 $L_sX=U(s)\oplus V(s)$ where $U(s)$ has zero differential and $V(s)$
 is contractible.  We know from \Cref{cor-acyc-proj} that
 $V(s)$ is projective in $\Ch(\A{\cat{U}})$, so we can choose a section of
 the projection $q_s^{-1}(V(s))\to V(s)$ and thus regard $V(s)$ as a
 subcomplex of $L_{\leq s}X$.  Similarly, $U(s)$ is projective in
 $\mathrm{Gr}(\A{\cat{U}})$ and so can be lifted to a graded subobject of
 $L_{\leq s}X$.  This need not be a subcomplex, but it will satisfy
 $d(U(s))\subseteq  L_{<s}X$.  It is easy to see that
 $L_{\leq s}X\cong U(s)\oplus V(s)\oplus L_{<s}X$, and we can extend this
 inductively to get
 $L_{\leq s}X\cong \bigoplus_{i\leq s}U(i)\oplus\bigoplus_{i\leq s}V(i)$.
 Thus, if we put $U=\bigoplus_{i>0}U(i)$ and $V=\bigoplus_{i>0}V(i)$ then
 $V$ is a contractible subcomplex of $X$ and $X\cong U\oplus V$.  It
 follows that the quotient $X/V$ is a thin complex that is isomorphic in $\mathrm{Gr}(\A{\cat{U}})$ to $U$. \Cref{cor-no-ext} tells us that
 $\Ext^1_{\Ch(\A{\cat{U}})}(X/V,V)=0$, so we must have
 $X\cong (X/V)\oplus V$ as chain complexes.
\end{proof}

\begin{Prop}\label{prop-thin-iso}
 Let $f\colon X\to Y$ be a quasi-isomorphism between thin complexes. Then
 $f$ is actually an isomorphism of chain complexes.
\end{Prop}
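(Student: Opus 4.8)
The plan is to prove by induction on $s$ that $f$ induces an isomorphism $L_sf\colon L_sX\to L_sY$ on every subquotient of the filtration of \cref{cons-filtration}, and then to conclude that $f$ is an isomorphism by dévissage. The dévissage step is routine: if $u\colon A\to B$ is a morphism of complexes in $\A{\cat U}$ that respects the filtrations $L_{\leq\bullet}$ and for which every $L_su$ is an isomorphism, then $u$ is an isomorphism, because isomorphisms in $\Ch(\A{\cat U})$ are detected degreewise and pointwise, and for each $G\in\cat U$ the filtration $(L_{\leq s}A_n)(G)$ is finite --- it equals $A_n(G)$ as soon as $s\geq|G|$ --- exhausts $A_n(G)$ and starts from $0$, so climbing it with the five lemma applied to the short exact sequences $0\to L_{<s}\to L_{\leq s}\to L_s\to 0$ shows each $u_n(G)$ is an isomorphism. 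The same argument applied to a truncated filtration shows that if $L_rf$ is an isomorphism for all $r<s$ then $L_{<s}f\colon L_{<s}X\to L_{<s}Y$ is an isomorphism.

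For the inductive step, assume $L_rf$ is an isomorphism for all $r<s$, so $L_{<s}f$ is an isomorphism. Since $X$ is thin, $L_{<s}X$ is a subcomplex of $X$, so I may form the quotient complexes $\bar X=X/L_{<s}X$ and $\bar Y=Y/L_{<s}Y$ and the induced map $\bar f\colon\bar X\to\bar Y$. Applying the five lemma to the long exact homology sequences of $0\to L_{<s}X\to X\to\bar X\to 0$ and its analogue for $Y$ (using that $L_{<s}f$ is an isomorphism and $f$ is a quasi-isomorphism) shows that $\bar f$ is a quasi-isomorphism. Now fix $H\in\cat U$ with $|H|=s$. Evaluating the thinness relation $d(L_{\leq s}X)\subseteq L_{<s}X$ at $H$ and using that $(L_{\leq s}X)(H)=X(H)$ (as $|H|=s$) gives $d(X(H))\subseteq(L_{<s}X)(H)$; hence $\bar X(H)=X(H)/(L_{<s}X)(H)$ has zero differential, and likewise $\bar Y(H)$. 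As evaluation at $H$ is exact, $\bar f(H)\colon\bar X(H)\to\bar Y(H)$ is a quasi-isomorphism between complexes with zero differential, hence an isomorphism of chain complexes; under the identifications $(L_sX)(H)=X(H)/(L_{<s}X)(H)=\bar X(H)$ and similarly for $Y$, this map is $(L_sf)(H)$. Thus $(L_sf)(H)$ is an isomorphism for every $H$ of order $s$. Finally, $L_sX$ and $L_sY$ are $s$-pure (\cref{rem-filt}, as $X,Y\in\Ch(\P{\cat U})$) and restriction along $\cat U_{=s}\hookrightarrow\cat U$ is an equivalence from the category of $s$-pure objects onto $\A{\cat U_{=s}}$ (\cref{defn-pure}), so $L_sf$ being an isomorphism at every group of order $s$ forces $L_sf$ to be an isomorphism. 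This completes the induction, and the dévissage then shows that $f$ is an isomorphism.

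The step that needs the most care is the reduction inside the inductive step: forming $\bar X=X/L_{<s}X$, checking it is genuinely a quotient complex, and observing that it becomes a complex with zero differential after evaluating at a group of order exactly $s$ --- this is the one place where thinness of $X$ is used. The remaining ingredients are the elementary dévissage lemma, the fact that a quasi-isomorphism between zero-differential complexes is an isomorphism, and the $s$-purity input from \cref{rem-filt} together with the equivalence of \cref{defn-pure}.
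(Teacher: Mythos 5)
Your proof is correct, but it takes a genuinely different route from the paper's. The paper forms the mapping cone $C(f)$, observes it is an acyclic complex of projectives and hence contractible by \cref{cor-acyc-proj} (which rests on the DG-projectivity theorem), and then uses additivity of $L_s$ to transport a contraction to $C(L_sf)$; this shows each $L_sf$ is a quasi-isomorphism between zero-differential complexes, hence an isomorphism, and the same induction-plus-colimit dévissage finishes. You instead avoid the contractibility input entirely: you get the quasi-isomorphism statement at level $s$ by applying the five lemma to the quotient complexes $X/L_{<s}X$ (legitimate, since $L_{<s}f$ is already known to be an isomorphism by induction, whereas $L_s$ itself is not exact), then evaluate at groups of order exactly $s$, where thinness forces the quotient differential to vanish and the quotient agrees with $L_s$, and finally upgrade the pointwise isomorphisms to an isomorphism of $s$-pure objects via \cref{rem-filt} and condition (d) of \cref{defn-pure}. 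Your argument is somewhat longer but logically lighter --- it does not invoke \cref{cor-acyc-proj} --- while the paper's is shorter at the cost of leaning on the main theorem of Section 4. Both are valid; the one point you were right to flag as delicate, the passage to $\bar X=X/L_{<s}X$ and its evaluation at groups of order $s$, checks out, since $L_{<s}X$ is a subcomplex and evaluation is exact.
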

\begin{proof}
 The mapping cone $C(f)$ is an acyclic complex of projectives, so it is contractible, see \Cref{cor-acyc-proj}.  If we choose a contraction $\sigma$, we see that $L_s(\sigma)$ gives a contraction of the complex $L_s(C(f))\cong C(L_sf)$, so the map $L_sf\colon L_sX\to L_sY$ must again be a quasi-isomorphism.  As the
 differential on $L_sX$ and $L_sY$ is zero, this just means that
 $L_sf$ is an isomorphism.  It follows by an evident induction that
 $L_{\leq s}f$ is an isomorphism for all $s$, and we can pass to the
 colimit to see that $f$ is an isomorphism.
\end{proof}

\begin{Thm}\label{prop-thin-replacement}
 Any complex $X\in \Ch(\A{\cat{U}})$  has a thin replacement which is unique up to isomorphism of chain complexes.
\end{Thm}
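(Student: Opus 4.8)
The plan is to treat existence and uniqueness separately, in both cases reducing everything to facts already established about complexes of projectives.

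For existence, I would begin with the canonical projective replacement: \cref{cons-P} produces a functorial $P(X)\in\Ch(\P{\cat U})$ together with a quasi-isomorphism $\epsilon\colon P(X)\to X$ (\cref{rem-epsilon-qi}). Since $P(X)$ is a complex of projectives, \cref{prop-thin} applies and gives a decomposition $P(X)\cong X'\oplus X''$ in $\Ch(\A{\cat U})$ with $X'$ thin and $X''$ contractible. The inclusion $X'\hookrightarrow P(X)$ is a homotopy equivalence (its complement $X''$ is contractible), in particular a quasi-isomorphism, so the composite $u\colon X'\hookrightarrow P(X)\xrightarrow{\epsilon}X$ is a quasi-isomorphism with thin source. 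This $(X',u)$ is the desired thin replacement.

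For uniqueness, suppose $u\colon U\to X$ and $v\colon V\to X$ are two thin replacements, so in particular $U,V\in\Ch(\P{\cat U})$. I would invoke \cref{cor-cofibrant-alt}: because $U$ is a complex of projectives and $v$ is a quasi-isomorphism, the map $v_*\colon\Hom_{\K{\A{\cat U}}}(U,V)\to\Hom_{\K{\A{\cat U}}}(U,X)$ is a bijection. Lifting the homotopy class of $u$ through $v_*$ yields a chain map $f\colon U\to V$ with $vf\simeq u$. By the $2$-out-of-$3$ property for quasi-isomorphisms, $f$ is itself a quasi-isomorphism; being a quasi-isomorphism between thin complexes, it is an honest isomorphism of chain complexes by \cref{prop-thin-iso}. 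Hence $U\cong V$ in $\Ch(\A{\cat U})$.

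In truth there is no serious obstacle here: all the substantial content lives in \cref{thm-cofibrant} and its corollaries (which turn quasi-isomorphisms between complexes of projectives into homotopy equivalences) and in \cref{prop-pure-split} together with \cref{prop-thin} (which supply the thin decomposition), and the argument above is purely formal bookkeeping. The one point I would be careful to state correctly is that uniqueness holds only up to isomorphism of chain complexes, not up to an isomorphism over $X$: the lift $f$ need only satisfy $vf\simeq u$ up to homotopy, and \cref{prop-thin-iso} is precisely the tool that promotes this homotopy-theoretic comparison to a strict isomorphism of complexes.
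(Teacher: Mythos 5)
Your proposal is correct and follows essentially the same route as the paper's proof: existence via $P(X)$ from \cref{cons-P} combined with the splitting of \cref{prop-thin}, and uniqueness via \cref{cor-cofibrant-alt} to lift one replacement through the other, followed by \cref{prop-thin-iso} to upgrade the resulting quasi-isomorphism between thin complexes to an isomorphism. Your closing remark about uniqueness holding only up to isomorphism (not over $X$) is a fair and accurate caveat, consistent with the statement as proved.
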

\begin{proof}
 \cref{cons-P} gives a quasi-isomorphism $P(X)\to X$ with $P(X)$ a chain complex of projective objects. \cref{prop-thin} then gives a decomposition $P(X)\cong U\oplus T$ where $U$ is thin and $T$ is contractible.  The composite $u=(U\to P(X)\to X)$ is then a thin
 replacement.  Suppose that $(V\xrightarrow{v}X)$ is another thin
 replacement. As $U$ is a chain complex of projective objects and $v$ is a quasi-isomorphism,
 \cref{cor-cofibrant-alt} gives $f\colon U\to V$ with $vf$ chain
 homotopic to $u$.  As $u$ and $v$ are quasi-isomorphisms, we see that
 $f$ is also a quasi-isomorphism, and therefore an isomorphism by
 \cref{prop-thin-iso}.
\end{proof}

We end this section by discussing some other features of thin complexes.
\begin{Prop}\label{prop-thin-retract}
 Let $X$ and $Y$ be thin complexes, and suppose that $X$ is a
 retract of $Y$ in $\K{\P{\cat{U}}}$.  Then $X$ is already a retract of
 $Y$ in $\Ch(\P{\cat{U}})$.
\end{Prop}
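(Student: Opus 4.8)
The plan is to split off a complementary summand of $X$ inside $Y$ in the homotopy category, arrange for that summand to be thin, and then upgrade the resulting homotopy equivalence to a strict isomorphism of complexes using the rigidity of thin complexes from \cref{prop-thin-iso}.

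First I would fix chain maps $i\colon X\to Y$ and $r\colon Y\to X$ whose homotopy classes satisfy $r\circ i=1_X$ in $\K{\P{\cat{U}}}$, and complete $i$ to a distinguished triangle $X\xrightarrow{i}Y\xrightarrow{g}C\xrightarrow{h}\Sigma X$ in $\K{\P{\cat{U}}}$, where $C$ is the mapping cone of $i$ formed in $\Ch(\A{\cat{U}})$. Since $C_n\cong X_{n-1}\oplus Y_n$ and $\P{\cat{U}}$ is closed under direct sums by \cref{prop-proj-closed-under-tensor}, we have $C\in\Ch(\P{\cat{U}})$, so this really is a triangle in $\K{\P{\cat{U}}}$. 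Because $i$ admits the retraction $r$, this triangle splits: the composite $\Sigma^{-1}C\xrightarrow{\Sigma^{-1}h}X\xrightarrow{i}Y$ vanishes (consecutive maps in a triangle), hence $\Sigma^{-1}h=(r\circ i)\circ\Sigma^{-1}h=r\circ(i\circ\Sigma^{-1}h)=0$ and $h=0$; comparing with the split triangle $X\to X\oplus C\to C\xrightarrow{0}\Sigma X$ via the five lemma for triangles then gives an isomorphism $Y\cong X\oplus C$ in $\K{\P{\cat{U}}}$ compatible with $i$.

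Next I would make the complement thin. By \cref{prop-thin} we may write $C\cong W\oplus T$ in $\Ch(\P{\cat{U}})$ with $W$ thin and $T$ contractible, so $C\simeq W$ and therefore $Y\simeq X\oplus W$ in $\K{\P{\cat{U}}}$. Now $X$ is thin by hypothesis and $W$ is thin by construction; since $L_{\leq s}$ preserves finite direct sums and the defining condition $d(L_{\leq s}(-))\leq L_{<s}(-)$ of thinness is additive, the complex $X\oplus W$ is again thin.

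Finally, the isomorphism $Y\cong X\oplus W$ in $\K{\P{\cat{U}}}$ is represented by a homotopy equivalence of chain complexes, which is in particular a quasi-isomorphism between the two thin complexes $Y$ and $X\oplus W$; by \cref{prop-thin-iso} it is therefore already an isomorphism in $\Ch(\P{\cat{U}})$. Transporting the inclusion of and projection onto the summand $X\subseteq X\oplus W$ across this isomorphism exhibits $X$ as a retract of $Y$ in $\Ch(\P{\cat{U}})$, as required. The argument is essentially a matter of assembling \cref{prop-proj-closed-under-tensor,prop-thin,prop-thin-iso} in the right order, and the key conceptual point is that thin models are rigid enough to promote a homotopy-theoretic retraction to a strict one; the only step that needs a genuine (though standard) argument is the splitting of the triangle attached to the split monomorphism $i$.
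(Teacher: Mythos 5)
Your proof is correct, but it takes a noticeably longer route than the one in the paper. The paper's argument is two lines: given chain maps $f\colon X\to Y$ and $g\colon Y\to X$ with $gf$ homotopic to $1_X$, the composite $u=gf$ is a quasi-isomorphism between thin complexes (namely a self-map of $X$), hence an honest isomorphism of chain complexes by \cref{prop-thin-iso}, and then $u^{-1}g$ is a strict retraction of $f$. In other words, the rigidity of thin complexes is applied only to the self-map of $X$, and no decomposition of $Y$ is needed at all. You instead split the triangle on the cone of the split monomorphism, invoke \cref{prop-thin} to replace the complement by a thin complex $W$, observe that $X\oplus W$ is again thin, and only then apply \cref{prop-thin-iso} to the homotopy equivalence $X\oplus W\to Y$. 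Every step of this checks out (the triangle-splitting argument, the closure of $\Ch(\P{\cat{U}})$ under mapping cones, and the additivity of thinness are all fine), and your version does buy a slightly stronger conclusion — an explicit direct-sum decomposition $Y\cong X\oplus W$ in $\Ch(\P{\cat{U}})$ with $W$ thin, rather than just a retraction — but for the statement as given the paper's observation that $gf$ itself is already a quasi-isomorphism of thin complexes short-circuits all of the cone and splitting machinery.
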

\begin{proof}
 By hypothesis, we have maps $X\xrightarrow{f}Y\xrightarrow{g}X$ in $\Ch(\P{\cat{U}})$ such
 that the composite $u=gf$ is chain homotopic to the identity.  This
 means in particular that $u$ is a quasi-isomorphism and so is an
 isomorphism in $\Ch(\P{\cat{U}})$ by \cref{prop-thin-iso}.
 The map $h=u^{-1}g\colon Y\to X$ now
 satisfies $hf=1$ and so exhibits $X$ as a retract of $Y$ in
 $\Ch(\P{\cat{U}})$. 
\end{proof}

\begin{Rem}\label{rem-thin-tensor}
 The tensor product of two thin complexes need not be thin.  To see
 this, let $C=C_2$ be the cyclic group of order $2$, and let $\epsilon$ be the
 homomorphism $C\to 1$.  Let $X$ be the complex $e_C\xrightarrow{\epsilon}e_1$,
 which is thin.  The only wide subgroups in $C\times C$ are the full
 group and the diagonal subgroup $\Delta$, so
 $e_C\otimes e_C\cong e_{C^2}\oplus  e_\Delta$, with $e_\Delta\cong e_C$, see \cite[Remark 4.12]{PolStrickland2022}.  The chain
 complex $L_2(X\otimes X)$ is isomorphic to
 \[(e_C\xrightarrow{\bbm 1\\-1\ebm}e_C\oplus  e_C\xrightarrow{}0)\] so the differential
 is not trivial. 
\end{Rem}

\section{Compact objects}
\label{sec-compact}
In this section we study the compact objects in $\D{\cat{U}}$ and give various equivalent characterisations of them. Moreover, we show that the homology of a non-zero compact object must contain a torsion-free element whenever $\cat{U}$ is a multiplicative global family.  Recall that we are always working under \cref{hyp-basic}.

We first identify a set of compact generators.
\begin{Prop}\label{prop-comp-gen}
 The objects $\{e_G\mid G\in\cat{U}\}$ form an essentially small class of
 compact objects that generates $\D{\cat{U}}$ (equivalently $\K{\P{\cat{U}}}$).
\end{Prop}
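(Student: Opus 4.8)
The plan is to pass through the equivalence $\D{\cat{U}}\simeq\K{\P{\cat{U}}}$ of \cref{thm-derived-category-equal-K-proj}, under which each $e_G$, regarded as a chain complex concentrated in degree $0$, lands in $\K{\P{\cat{U}}}$ because $e_G$ is projective by \cref{prop-ebullet}. Essential smallness is immediate: finite groups form an essentially small category, hence so does $\cat{U}$, so $\{e_G\mid G\in\cat{U}\}$ has only a set of isomorphism classes.

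The computational heart of the argument will be the identification, for every $Y\in\Ch(\A{\cat{U}})$ and every $n\in\bbZ$, of
\[
 \Hom_{\D{\cat{U}}}(\Sigma^n e_G,Y)\cong H_n(Y)(G).
\]
To establish this I would argue as follows. Since $\Sigma^n e_G$ is a complex of projectives it is $K$-projective by \cref{thm-cofibrant}, so the left-hand side may be computed as $\Hom_{\K{\A{\cat{U}}}}(\Sigma^n e_G,Y)=H_0\Hom(\Sigma^n e_G,Y)$ using \cref{def-K(B)}. Because $e_G$ is concentrated in degree $0$, the Hom-complex of \cref{rec-internal_homs} has $m$-th term $\Hom_{\A{\cat{U}}}(e_G,Y_{m+n})\cong Y_{m+n}(G)$ by the defining adjunction for $e_G$ from \cref{defn-eGV}; as $\mathrm{ev}_G$ is exact these isomorphisms are compatible with the differentials, so $\Hom(\Sigma^n e_G,Y)$ is a shift of the complex $Y(G)$, and taking $H_0$ yields the displayed formula.

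Compactness will then follow formally: coproducts in $\D{\cat{U}}\simeq\K{\P{\cat{U}}}$ are computed degreewise in $\Ch(\P{\cat{U}})$, using that $\P{\cat{U}}$ is closed under direct sums (\cref{prop-proj-closed-under-tensor}); the evaluation $\mathrm{ev}_G$ preserves direct sums (\cref{rec-pointwise}); and homology commutes with direct sums in the Grothendieck category $\A{\cat{U}}$ because coproducts are exact there. Feeding these facts into the displayed formula shows that $\Hom_{\D{\cat{U}}}(e_G,-)$ preserves coproducts. For generation I would invoke the standard criterion that a set of compact objects generates a triangulated category with coproducts precisely when its right orthogonal vanishes: if $Y$ satisfies $\Hom_{\D{\cat{U}}}(\Sigma^n e_G,Y)=0$ for all $n\in\bbZ$ and all $G\in\cat{U}$, then $H_n(Y)(G)=0$ for all $n$ and $G$ by the displayed formula, so $H_*(Y)=0$ and hence $Y\cong 0$ in $\D{\cat{U}}$.

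I do not expect a serious obstacle. The only points needing genuine care are that the evaluation isomorphisms above are compatible with the differential in the Hom-complex (a consequence of exactness of $\mathrm{ev}_G$) and that coproducts in $\D{\cat{U}}$ are realised by degreewise coproducts of complexes of projectives; both are routine given \cref{thm-derived-category-equal-K-proj,thm-cofibrant,prop-proj-closed-under-tensor}.
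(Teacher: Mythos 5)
Your proposal is correct and follows essentially the same route as the paper: both rest on the natural isomorphism $\Hom_{\D{\cat{U}}}(\Sigma^n e_G,Y)\cong H_n(Y)(G)$, from which compactness follows because homology and evaluation commute with coproducts, and generation follows because vanishing of these groups for all $n$ and $G$ forces $Y\simeq 0$. You simply spell out the justification of that isomorphism (via $K$-projectivity of $\Sigma^n e_G$ and the adjunction defining $e_G$) in more detail than the paper does.
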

\begin{proof}
 We have a natural isomorphism $\Hom_{\D{\cat{U}}}(\Sigma^i e_G,X)\cong H_i(X(G))$ for each $i \in \Z$. This implies that $e_G$ is compact for each $G \in \cat{U}$. Moreover, if 
 \[
 \Hom_{\D{\cat{U}}}(\Sigma^i e_G, X) \cong H_i(X(G))= 0
 \]
 for each $i \in \Z$, and $G \in \cat{U}$, then $X \simeq 0$ so the set generates $\D{\cat{U}}$. 
\end{proof}

\begin{Def}\label{defn-perfect}
 A chain complex $X\in\Ch(\A{\cat{U}})$ is \emph{perfect} if it is a bounded complex of finitely generated projectives. This means
 that $X$ is isomorphic in $\Gr(\A{\cat{U}})$ to
 $\bigoplus_{i=1}^r\Sigma^{n_i}e_{G_i,V_i}$ for some $n_i \in \bbZ$,
 groups $G_i\in\cat{U}$, and non-trivial finite-dimensional
 $k[\Out(G_i)]$-representations $V_i$ by \cref{rem-projective-objects}.
\end{Def}

We now give a characterisation of the compact objects in $\D{\cat{U}}$. Some of the following equivalent conditions could be obtained from general results such as~\cite[tag 094B]{stacks}, but the particular features of $\A{\cat{U}}$ allow for a simpler and more direct proof.

\begin{Thm}\label{prop-perfect}
 Let $\cat U$ be a family satisfying \cref{hyp-basic}. For a chain complex $X\in\Ch(\A{\cat{U}})$, the following are equivalent:
 \begin{itemize}
  \item[(a)] $X$ is isomorphic in $\Ch(\A{\cat{U}})$ to the direct sum of a
   thin perfect complex and a contractible complex;
  \item[(b)] $X$ is isomorphic in $\D{\cat{U}}$ to a thin perfect complex;
  \item[(c)] $X$ is isomorphic in $\D{\cat{U}}$ to a perfect complex;
  \item[(d)] $X$ is a retract in $\D{\cat{U}}$ of a perfect complex;
  \item[(e)] $X$ lies in the thick subcategory of $\D{\cat{U}}$ 
   generated by $\{e_G\mid G\in\cat{U}\}$;
  \item[(f)] $X$ is a compact object of $\D{\cat{U}}$.
 \end{itemize}
\end{Thm}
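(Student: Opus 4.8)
\emph{Proof plan.} The plan is to run the cyclic chain of implications (a)$\Rightarrow$(b)$\Rightarrow$(c)$\Rightarrow$(d)$\Rightarrow$(e)$\Rightarrow$(f)$\Rightarrow$(a), of which only the last is substantial. The implications (a)$\Rightarrow$(b)$\Rightarrow$(c)$\Rightarrow$(d) are essentially formal: a thin perfect complex is perfect, a perfect complex is its own retract, and if $X\cong P\oplus C$ in $\Ch(\A{\cat U})$ with $P$ thin perfect and $C$ contractible then $C$ becomes zero in $\D{\cat U}$, so $X\cong P$ there. For (d)$\Rightarrow$(e) I would observe that, by Maschke's theorem, any finite-dimensional $k[\Out(G)]$-representation $V$ is a summand of a finite sum of copies of $k[\Out(G)]$, so $e_{G,V}$ is a retract of a finite sum of copies of $e_G$; using the brutal truncation filtration of a bounded complex, it follows that every perfect complex lies in $\thick{\{e_G\mid G\in\cat U\}}$, which is closed under retracts. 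For (e)$\Rightarrow$(f), recall from \cref{prop-comp-gen} that each $e_G$ is compact and that $\D{\cat U}$ is compactly generated; since the compact objects of any compactly generated triangulated category form a thick subcategory (in particular one closed under retracts, using idempotent completeness), (f) follows.

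The content is (f)$\Rightarrow$(a). First I would reduce to the case $X\in\Ch(\P{\cat U})$ using \cref{thm-derived-category-equal-K-proj} (replacing $X$ by a quasi-isomorphic complex of projectives), and then, by \cref{prop-thin}, write $X\cong X'\oplus X''$ with $X'$ thin and $X''$ contractible; since $X'\cong X$ in $\D{\cat U}$ it is again compact, and it suffices to prove that $X'$ is perfect, as then $X\cong X'\oplus X''$ realises~(a). Next I would use \cref{rem-filt}: the short exact sequence $0\to\bigoplus_s L_{\leq s}X'\xrightarrow{1-\Sigma}\bigoplus_s L_{\leq s}X'\to X'\to 0$ is chainwise split, so it presents $X'$ as the homotopy colimit of the chain $L_{\leq 1}X'\hookrightarrow L_{\leq 2}X'\hookrightarrow\cdots$ in $\K{\P{\cat U}}\simeq\D{\cat U}$. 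Compactness of $X'$ forces the identity of $X'$ to factor through a finite stage, so $X'$ is a retract in $\D{\cat U}$ of $L_{\leq N}X'$ for some $N$. Now $L_{\leq N}X'$ is a finite iterated extension of the quotient complexes $L_1X',\dots,L_NX'$, and each $L_sX'$ has zero differential and is $s$-pure, hence (by \cref{rem-projective-objects}) a possibly infinite direct sum of shifted objects $e_{G,V}$ with $|G|=s$.

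The hard part will be the remaining compression step: I would argue, by induction on $N$, that compactness of $X'$ applied to the maps out of $X'$ arising from the retraction together with the successive quotient maps of the filtration lets one replace each infinite coproduct $L_sX'$ by a suitable \emph{finite} subsum without destroying the fact that $X'$ is a retract; the resulting object is a finite extension of bounded complexes of finitely generated projectives, hence itself perfect. (This is precisely the point that the general theory of compactly generated triangulated categories would handle abstractly; the virtue of the thin picture is that the explicitly split filtration makes the bookkeeping transparent.) To finish, I would convert ``$X'$ is a retract in $\D{\cat U}$ of a perfect complex'' into perfectness of $X'$ by the thin machinery: a perfect complex lies in $\Ch(\P{\cat U})$, so by \cref{prop-thin} it is, up to a contractible summand, a thin perfect complex $Q$; then $X'$ and $Q$ are thin complexes with $X'$ a retract of $Q$ in $\K{\P{\cat U}}$, so by \cref{prop-thin-retract} $X'$ is a retract of $Q$ already in $\Ch(\P{\cat U})$, hence a direct summand of a bounded complex of finitely generated projectives, hence perfect. (Here \cref{prop-thin-iso} is what makes the thin representative essentially unique and thereby makes \cref{prop-thin-retract} usable.) This closes the cycle, and I expect the genuine obstacle throughout to be the finiteness extraction in the compression step rather than any of the formal manipulations.
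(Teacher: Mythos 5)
Your formal implications, and your final conversion of ``retract in $\D{\cat{U}}$ of a perfect complex'' into statement~(a) via \cref{prop-thin} and \cref{prop-thin-retract}, are exactly the paper's argument for (d)$\Rightarrow$(a), and your (d)$\Rightarrow$(e) via Maschke and brutal truncation is a harmless variant of the paper's (b)$\Rightarrow$(e) (which uses the $L$-filtration instead). The real divergence is in how compactness enters. The paper never proves (f)$\Rightarrow$(a) directly: it establishes the equivalence of (a)--(e) first, and then disposes of (f) in one line by citing Neeman's \cite[Lemma 2.2]{Neemanlocalization} together with \cref{prop-comp-gen}, i.e.\ it imports the standard theorem that in a compactly generated triangulated category the compacts are exactly the thick subcategory generated by a set of compact generators. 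You instead try to reprove that hard direction by hand.

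The problem is that your ``compression step'' --- the actual content of (f)$\Rightarrow$(a) in your scheme --- is only gestured at, and the induction you sketch is not obviously well-founded. Granting that $X'$ is a retract in $\D{\cat{U}}$ of $L_{\leq N}X'$ (your homotopy-colimit argument for this is fine, since the sequence in \cref{rem-filt} is chainwise split), you still face two issues. First, each layer $L_sX'$ is an $s$-pure complex with zero differential, hence a possibly \emph{infinite and unbounded} direct sum of shifted $e_{G,V}$'s; choosing a finite subsum at each layer does not produce a subcomplex of $L_{\leq N}X'$, because the differential carries the chosen generators of $L_sX'$ into $L_{<s}X'$ but not necessarily into the finite subsums already chosen there, so the finite stages must be built by a careful simultaneous recursion (and boundedness in the chain direction must be extracted separately --- you never address why the resulting complex is bounded). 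Second, even after exhibiting $L_{\leq N}X'$ as a filtered colimit of such finite subobjects, you need to know that compactness of $X'$ lets the retraction factor through one of them, which requires identifying that filtered colimit with a homotopy colimit; this is again nontrivial bookkeeping. None of this is impossible --- it is essentially a hands-on proof of Neeman's lemma in this setting --- but as written it is the one genuinely substantial point of the theorem and it is missing. The cleanest repair is to do what the paper does: prove (e)$\Rightarrow$(a)--(d) by observing that the class of complexes satisfying (a)--(d) is thick (cones of maps of perfect complexes are perfect) and contains every $e_G$, and then get (f)$\Rightarrow$(e) from \cite[Lemma 2.2]{Neemanlocalization}.
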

\begin{proof}
 It is immediate that (a)$\Rightarrow$(b)$\Rightarrow$(c)$\Rightarrow$(d).  We next
 prove that (d)$\Rightarrow$(a). By \cref{thm-derived-category-equal-K-proj} we can work in $\K{\P{\cat{U}}}$ instead of $\D{\cat{U}}$, so we suppose that $X\in\K{\P{\cat{U}}}$ and that
 $X$ is a retract in $\K{\P{\cat{U}}}$ of some perfect complex $Y$.  By
 \cref{prop-thin}, we can choose isomorphisms
 $X \cong X'\oplus  X''$ and $Y \cong Y'\oplus  Y''$ in $\Ch(\P{\cat{U}})$ such that $X'$
 and $Y'$ are thin and $X''$ and $Y''$ are contractible.  As $Y'$ is a
 retract of $Y$ in $\Ch(\P{\cat{U}})$, we see that $Y'$ is also perfect.
 In $\K{\P{\cat{U}}}$ we have $X\simeq X'$ and $Y\simeq Y'$ so $X'$ is a
 retract of $Y'$.  \cref{prop-thin-retract} now tells us
 that $X'$ is already a retract of $Y'$ in $\Ch(\P{\cat{U}})$, so $X'$ is a
 thin perfect complex.  This completes the proof that
 (d)$\Rightarrow$(a), so (a) to (d) are all equivalent.  

 If $X$ is a thin perfect complex, then it is equal to $L_{\leq s}X$
 for sufficiently large $s$, and each filtration quotient $L_nX$ is isomorphic as a chain
 complex to a retract of a finite direct sum of terms of the form $\Sigma^me_{G,V}$ where $V$ is finite dimensional. Write $V \cong \oplus_{i=1}^l S_i$ as a sum of indecomposables so that $e_{G,V} \cong \oplus_{i=1}^l e_{G,S_i}$. Since each $e_{G,S_i}$ is a retract of $e_G$, we see that $e_{G,V} \in \thick{e_G}$. Then by induction using the distinguished triangle $L_{<n}X \to L_{\leq n}X \to L_nX$, we conclude that (b)$\Rightarrow$(e).  
 
 Conversely, let $\cat{S}$ be the
 full subcategory where conditions (a) to (d) are satisfied.  As the
 mapping cone of any morphism of perfect complexes is again a perfect
 complex, it is not hard to check that $\cat{S}$ is thick, and it clearly
 contains $e_G$ for all $G \in \cat{U}$.  This proves that (e) implies (a) to (d),
 and thus that (a) to (e) are all equivalent.

 All that is left to prove is that (e) and (f) are equivalent, which is immediate from \cite[Lemma 2.2]{Neemanlocalization} together with \cref{prop-comp-gen}.
\end{proof}

We next give a concrete description of the compact objects in $\D{\cat{U}}$ in the case when $\cat{U}$ contains only finitely many isomorphism classes of groups.
\begin{Prop}\label{prop-finite-U}
 Suppose that $\cat{U}$ is essentially finite (i.e., it has only finitely
 many isomorphism classes of objects).  Then an object $X\in\D{\cat{U}}$ is
 compact if and only if for every $G\in\cat{U}$, the sum
 $\bigoplus_{n\in\Z}H_n(X(G))$ has finite dimension.
\end{Prop}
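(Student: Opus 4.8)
The plan is to prove the two implications separately.

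\medskip
\emph{Compact $\Rightarrow$ finite total homology.} If $X$ is compact then by \cref{prop-perfect} we may assume that $X$ is a perfect complex, so by \cref{defn-perfect} it is isomorphic in $\Gr(\A{\cat{U}})$ to a finite sum $\bigoplus_{i=1}^r\Sigma^{n_i}e_{G_i,V_i}$ with each $V_i$ finite-dimensional. Evaluating at any $G\in\cat{U}$ yields a bounded complex whose $n$-th term is $\bigoplus_{i\colon n_i=n}V_i\otimes_{k[\Out(G_i)]}k[\Hom_{\cat{U}}(G,G_i)]$, which is finite-dimensional since $\Hom_{\cat{U}}(G,G_i)$ is a finite set. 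Hence $\bigoplus_n H_n(X(G))$ is finite-dimensional. (This implication does not use that $\cat{U}$ is essentially finite.)

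\medskip
\emph{Finite total homology $\Rightarrow$ compact.} First I would use \cref{prop-thin-replacement} to replace $X$ by a thin replacement $U\in\Ch(\P{\cat{U}})$, which does not change $H_*(X(G))$. Since $\cat{U}$ has only finitely many isomorphism classes, $U=L_{\leq N}U$ for $N$ the maximal order of a group in $\cat{U}$, so it suffices to prove by induction on $s$ that $L_{\leq s}U$ is a perfect complex; taking $s=N$ then shows that $U$, and hence $X\simeq U$, is compact by \cref{prop-perfect}. The base case $L_{\leq 0}U=0$ is trivial. For the inductive step, assume $L_{<s}U=L_{\leq s-1}U$ is perfect. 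By the defining property of the filtration (\cref{cons-filtration}) we have $(L_{\leq s}U)(G)=U(G)$ whenever $|G|\leq s$, so $H_*\bigl((L_{\leq s}U)(G)\bigr)=H_*(X(G))$ is finite-dimensional for such $G$, while $H_*\bigl((L_{<s}U)(G)\bigr)$ is finite-dimensional by the inductive hypothesis (a perfect complex evaluated at $G$ is a bounded complex of finite-dimensional vector spaces). Applying the exact evaluation functor $\psi_G$ (see \cref{rec-pointwise}) to the short exact sequence $0\to L_{<s}U\to L_{\leq s}U\to L_sU\to 0$ gives a long exact homology sequence; since $U$ is thin the differential on $L_sU$ vanishes (\cref{defn-thin}), so $H_n\bigl((L_sU)(G)\bigr)=(L_sU)_n(G)$. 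A dimension count in the long exact sequence then bounds $\sum_n\dim(L_sU)_n(G)$ by $\sum_n\dim H_n\bigl((L_{\leq s}U)(G)\bigr)+\sum_n\dim H_n\bigl((L_{<s}U)(G)\bigr)<\infty$ for every $G$ with $|G|\leq s$.

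\medskip
It remains to deduce that $L_sU$ is perfect and complete the induction. Since $U$ is a complex of projectives, each $(L_sU)_n$ is $s$-pure (\cref{rem-filt}), hence of the form $i_!(B_n)$ for some $B_n\in\A{\cat{U}_{=s}}$; as $\cat{U}_{=s}$ is an essentially finite groupoid we may write $B_n\cong\bigoplus_{[H]}e_{H,(L_sU)_n(H)}$ over the finitely many isomorphism classes of groups of order $s$. The finiteness of $\bigoplus_n(L_sU)_n(H)$ for each such $H$ forces $(L_sU)_n=0$ for all but finitely many $n$ and forces each non-zero $(L_sU)_n$ to be a finite sum of objects $e_{H,V}$ with $V$ finite-dimensional, i.e.\ a finitely generated projective (\cref{rem-projective-objects}); as the differential vanishes, $L_sU$ is perfect. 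Since each $(L_sU)_n$ is $s$-pure and therefore projective, the short exact sequence above is degreewise split, so $(L_{\leq s}U)_n\cong(L_{<s}U)_n\oplus(L_sU)_n$ is finitely generated projective and $L_{\leq s}U$ is bounded; hence $L_{\leq s}U$ is perfect, completing the induction.

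\medskip
The step I expect to require the most care is the bookkeeping in the inductive step: one must invoke the defining property of $L_{\leq s}$ to know that $H_*(X(G))$ computes the homology of the truncation $L_{\leq s}U$ at each $G$ of order $\leq s$, and one must use thinness precisely to ensure that the subquotients $L_sU$ have zero differential, so that their homology at $G$ is read off degreewise and the bound on $\sum_n\dim(L_sU)_n(G)$ translates into finite generation and boundedness. The remaining ingredients — the long exact sequence, the splitness coming from $s$-purity, and the classification of projectives — are routine.
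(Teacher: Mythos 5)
Your proof is correct and follows essentially the same route as the paper: the forward direction via the perfect-complex characterisation, and the converse via a thin replacement, induction along the filtration $L_{\leq s}$, and the long exact sequence in homology to control $(L_sU)(G)$ for $|G|=s$. The only (harmless) difference is that your induction establishes the slightly stronger conclusion that each $L_{\leq s}U$ is perfect, using the classification of $s$-pure objects to deduce boundedness and finite generation explicitly, whereas the paper stops at compactness of each filtration stage.
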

\begin{proof}
 First suppose that $X$ is compact.  By
 \cref{prop-perfect}, we can assume that $X$ is a thin
 perfect complex, so $X_n$ is finitely generated projective for all $n$, and
 is zero for all but finitely many $n$.  For any $G$ it then follows
 that $X_n(G)$ is finite dimensional  for all $n$, and is zero for all
 but finitely many $n$.  It follows in turn that $H_*(X)$ has the same
 finiteness properties. 

 For the converse suppose that $\bigoplus_{n\in\Z}H_n(X(G))$ has 
 finite dimension. Without loss of generality, by \cref{prop-thin-replacement} we can assume that 
 $X$ is thin. We will prove by induction on 
 $s\geq 0$ that $L_{\leq s}X$ is compact in $\D{\cat{U}}$. The backwards 
 implication will then follow by noting that $X=L_{\leq s}X$ for some 
 $s\gg 0$ as $\cat{U}$ is essentially finite. The base of the induction is clear since 
 $L_{\leq 0}X=0$. Suppose that $s>0$ and consider the short exact sequence 
 $L_{<s}X\to L_{\leq s}X\to L_sX$. Since $L_{<s}X$ is compact by induction, 
 it suffices to show that $L_sX$ is compact. 
 
 Since $X$ was assumed to be 
 thin, we have $L_s X\cong \bigoplus_{i=1}^r \Sigma^{n_i} e_{G_i,V_i}$ for 
 some family of representations $V_i$ and groups $G_i\in\cat{U}_{=s}$. 
 Note that since $\cat{U}$ is essentially finite, we can arrange the direct sum to be finite 
 but a priori the representations $V_i$ might be infinite dimensional.  
 It follows that $L_sX$ is compact if and only if 
 $L_s X(G)$ is finite dimensional for all $G\in \cat{U}_{=s}$. Since $L_s X$ has no 
 non-zero differentials, this is also equivalent to 
 $H_n(L_s X(G))$ being finite dimensional for all $G\in \cat{U}_{=s}$ and $n\in\Z$.
 By our inductive hypothesis and the first paragraph we know that 
 $\bigoplus_{n\in\Z}H_n(L_{<s}X(G))$ is finite dimensional for all $G\in\cat{U}_{=s}$. 
 Moreover, $L_{\leq s}X(G)=X(G)$ for all $G\in\cat{U}_{=s}$ by definition of 
 $L_{\leq s}$. This together with our initial assumption implies that 
 $\bigoplus_n H_n(L_{\leq s}X(G))$ is finite dimensional 
 for all $G\in\cat{U}_{=s}$. The long exact sequence in homology then shows that 
 $H_n(L_sX(G))$ is finite dimensional for all $G\in\cat{U}_{=s}$ and $n\in\Z$, 
 concluding the proof. 
\end{proof}

We now give an application of thin complexes to the structure of the homology of compact objects for multiplicative global families. This result is a key piece in understanding the tt-geometry of $\D{\cat{U}}$ as we will show in the sequel~\cite{BBPSWttgeometry}. 
First we recall the following definition from \cite[Definition 12.1]{PolStrickland2022}.
\begin{Def}
    Consider $X \in \A{\cat U}$ and $x \in X(G)$ for some $G \in \cat U$. 
    \begin{enumerate}
        \item We say that $x$ is \emph{torsion} if there exists $\alpha\colon H \to G$ in $\cat U$ such that $\alpha^*(x)=0$.
        \item We say that $x$ is \emph{torsion-free} if it is not torsion. 
        \item We say that $X$ is \emph{torsion} if it consists of torsion elements. 
        \item We say that $X$ is \emph{torsion-free} if it does not contain any non-zero torsion element. Equivalently, $X$ is torsion-free if and only if all the maps $\alpha^*\colon X(G) \to X(H)$ are injective.
    \end{enumerate}
\end{Def}
\begin{Exa}
    The objects $e_{G,V}\in \A{\cat U}$ are torsion-free by~\cite[Lemma 12.10]{PolStrickland2022}.
\end{Exa}
\begin{Thm}\label{thm-compact-torsion-free-element}
 Let $\cat{U}$ be a multiplicative global family. If $X \in \D{\cat{U}}$ is non-zero and compact, then there exists a torsion-free element in $H_n(X)$ for some $n \in \Z$. 
\end{Thm}
\begin{proof}
 By \cref{prop-perfect}, we can assume that $X$ is a non-zero thin perfect complex. Choose $m$ minimal so that $X=L_{\leq m}X$ and $G$ with $|G|=m$ such that $X$ contains a summand of the form $\Sigma^de_{G,S}$. 
 Note that $L_{<m}X$ contains only summands $e_{H,T}$
 with $|H|<m$ so $e_G(H)=0$ so $\Hom_{\A{\cat{U}}}(e_{H,T},e_{G,S})=0$.  This
 shows that the complex $\Hom_{\A{\cat{U}}}(X,e_{G,S})$ can be identified with the
 complex $\Hom_{\A{\cat{U}}}(L_mX,e_{G,S})$, in which the differential is zero.
 It follows that the projection $(L_mX)_d \cong X_d\to e_{G,S}$ gives a non-trivial
 class in $H_d(\Hom_{\A{\cat{U}}}(X,e_{G,S}))$.  As $e_{G,S}$ is injective for multiplicative global families $\cat{U}$ by \cite[Proposition 15.1]{PolStrickland2022}, we can
 also identify $H_d(\Hom_{\A{\cat{U}}}(X,e_{G,S}))$ with
 $\Hom_{\A{\cat{U}}}(H_{-d}(X),e_{G,S})$.  We thus have a non-trivial map
 $f\colon H_{-d}(X)\to e_{G,S}$.  This means that there exists $G'\in\cat{U}$ and
 $x\in H_{-d}(X)(G')$ such that $f(x)$ is non-zero in $e_{G,S}(G')$.  The
 object $e_{G,S}$ is torsion-free so
 $x$ is not a torsion element.
\end{proof}
We note that the theorem does not hold if we relax the assumption on the family.
\begin{Exa}\label{ex-cyclic-groups-no-growth}
    Let $\cat U$ be the family of cyclic $2$-groups. Let $X=\mathrm{cof}(e_{C_2}\to \unit )\in\D{\cat U}$ which is non-zero and compact. We can easily calculate that 
    \[
    H_*(X(C_{2^n}))\cong H_0(X(C_{2^n}))\cong\begin{cases}
        0 & \mathrm{if}\; n>0 \\
        k & \mathrm{if}\; n=0.
    \end{cases}
    \]
    Thus, in this case $X$ does not contain any torsion-free element.
\end{Exa}

\begin{Rem}\label{rem-growth}
 The existence of torsion-free classes in the homology of any non-zero compact object can interpreted as encoding certain growth properties in $\cat U$ as we now explain. Consider a morphism $\alpha\colon G \to H$ in $\cat U$ which is not an isomorphism, and consider 
 \[
 X=\mathrm{cof}(e_G \xrightarrow{e_{\alpha}}e_H)\in \D{\cat U}^c.
 \]
 The long exact sequence in homology takes the form 
 \[
 0 \to H_1(X) \to e_G \xrightarrow{e_{\alpha}} e_H \to H_0(X) \to 0
 \]
 from which we deduce that $H_1(X)$ is torsion-free (as it is a subobject of a torsion-free object) provided that $H_1(X)\not =0$. This latter condition is ensured by requiring that the map $e_\alpha$ is not injective, or equivalently that $|\Hom_{\cat U}(T,G)|> |\Hom_{\cat U}(T,H)|$ for some $T \in \cat U$. For more general $X\in \thick{e_G,e_H}$, the existence of a torsion-free element in $H_*(X)$ is ensured by requiring that the growth rate of $|\Hom_{\cat U}(T,G)|$ is greater than that of $|\Hom_{\cat U}(T,H)|$ as $T$ increases. Subcategories 
$\cat U$ exhibiting this type of well-behaved growth rate for their groups were first studied in \cite[Section 10]{PolStrickland2022}, where they were referred to as complete subcategories. Interestingly, any multiplicative global family is complete \cite[Proposition 10.7]{PolStrickland2022}, while the family of cyclic $p$-groups is not~\cite[Example 10.6]{PolStrickland2022}. This is inline with \cref{ex-cyclic-groups-no-growth}.
\end{Rem}

The existence of a torsion-free element in the homology of a non-zero compact object $X$ will let us prove that the thick ideal generated by $X$ contains a generator $e_G$ for some $G \in \cat U$ when $\cat{U}$ is a multiplicative global family. This will be an important ingredient in our companion paper \cite{BBPSWttgeometry}. Before proving this result we need the following lemma.
\begin{Lem}\label{lem-eG-eGX}
 Let $\cat U$ be a multiplicative global family and consider $X\in\A{\cat U}$ and $x\in X(G)$.  Suppose that $x$ is a
 torsion-free element.  Then $x$ gives rise to a split monomorphism
 $i\colon e_G\to e_G\otimes X$. 
\end{Lem}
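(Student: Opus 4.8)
The plan is to write down the map $i$ explicitly, check it is a monomorphism using that $x$ is torsion-free, and then deduce that it splits from the fact that $e_G$ is an injective object of $\A{\cat{U}}$ when $\cat{U}$ is a multiplicative global family. Recall from \cref{defn-eGV} that maps $e_G\to Y$ in $\A{\cat{U}}$ correspond naturally to elements of $Y(G)$, and from the remark following \cref{defn-eGV} that $e_G\cong k[\Hom_{\cat{U}}(-,G)]$, so that $e_G(G)=k[\End_{\cat{U}}(G)]=k[\Out(G)]$. First I would define $i\colon e_G\to e_G\otimes X$ to be the map corresponding to the element $1_G\otimes x\in e_G(G)\otimes X(G)=(e_G\otimes X)(G)$, where $1_G$ denotes the class of the identity. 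Unwinding this description, for each $T\in\cat{U}$ the component $i_T\colon k[\Hom_{\cat{U}}(T,G)]\to k[\Hom_{\cat{U}}(T,G)]\otimes X(T)$ sends a basis element $\alpha\in\Hom_{\cat{U}}(T,G)$ to $\alpha\otimes\alpha^*(x)$; naturality in $T$ is a one-line check using functoriality of $X$.

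Next I would verify that $i$ is a monomorphism. Since kernels in $\A{\cat{U}}$ are computed pointwise (\cref{rec-pointwise}), it suffices to show that each $i_T$ is injective. Decomposing the target as $\bigoplus_{\alpha\in\Hom_{\cat{U}}(T,G)}X(T)$, the map $i_T$ carries the line $k\cdot\alpha\subseteq e_G(T)$ into the $\alpha$-th summand, where it becomes the map $k\cdot\alpha\to X(T)$, $c\alpha\mapsto c\,\alpha^*(x)$. Because $x$ is torsion-free, $\alpha^*(x)\neq 0$ for every $\alpha\in\Hom_{\cat{U}}(T,G)$, so each of these maps is injective and $i_T$, being a direct sum of them over distinct summands, is injective as well.

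Finally, to see that $i$ splits I would invoke injectivity: since $\cat{U}$ is a multiplicative global family, $e_G=e_{G,k[\Out(G)]}$ is an injective object of $\A{\cat{U}}$ by \cite[Proposition 15.1]{PolStrickland2022}. Applying the injectivity of $e_G$ to the monomorphism $i$ together with the identity map $\mathrm{id}_{e_G}\colon e_G\to e_G$ yields a morphism $r\colon e_G\otimes X\to e_G$ with $r\circ i=\mathrm{id}_{e_G}$, exhibiting $i$ as a split monomorphism. The one point worth flagging — and the only real subtlety — is that one should \emph{not} attempt to construct an explicit retraction $r$: this would amount to choosing compatible linear functionals on all the spaces $X(T)$ sending $\alpha^*(x)$ to $1$, which in general do not exist. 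Instead the retraction is obtained purely formally from injectivity of $e_G$, and this is precisely where the hypothesis that $\cat{U}$ is multiplicative global is used.
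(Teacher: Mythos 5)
Your proposal is correct and follows essentially the same route as the paper: the same explicit formula $i_T(\alpha)=\alpha\otimes\alpha^*(x)$, pointwise injectivity from $\alpha^*(x)\neq 0$, and the splitting obtained formally from injectivity of $e_G$ via \cite[Proposition 15.1]{PolStrickland2022} rather than from any explicit (non-natural) retraction. The only cosmetic difference is that the paper certifies injectivity by choosing pointwise left inverses $p_{G'}$, whereas you read it off from the direct-sum decomposition of the target; these are the same observation.
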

\begin{proof}
 For any $G'\in\CU$ we define 
 \[ 
 i_{G'}\colon e_G(G')\cong k[\Hom_{\cat{U}}(G',G)]\to k[\Hom_{\cat{U}}(G',G)]\otimes X(G') 
 \]
 by $i_{G'}([\alpha])=[\alpha]\otimes\alpha^*(x)$.  This gives a morphism
 $i\colon e_G\to e_G\otimes X$ in $\A{\cat U}$.  Next, for each morphism
 $\alpha \colon G'\to G$ in $\cat U$, it is given that the element
 $\alpha^*(x)\in X(G')$ is non-zero, so we can choose a $k$-linear map
 $p_\alpha\colon X(G')\to k$ with $p_\alpha(\alpha^*(x))=1$.  Using these maps, we can define 
 \[ 
 p_{G'}\colon k[\Hom_{\cat{U}}(G',G)]\otimes X(G')\to k[\Hom_{\cat{U}}(G',G)] 
 \]
 by $p_{G'}([\alpha]\otimes v)=p_\alpha(v)[\alpha]$.  This satisfies
 $p_{G'}\circ i_{G'}=1$, so $i$ is a monomorphism.  The maps $p_{G'}$
 are not generally natural in $G'$, so we cannot immediately conclude
 that $i$ is a split monomorphism.  However, we know that $e_G$
 is an injective object in $\A{\cat U}$ by \cite[Proposition 15.1]{PolStrickland2022}, and this gives a splitting
 automatically.  
\end{proof}

\begin{Cor}\label{cor-eG-retract}
 Let $\cat U$ be a multiplicative global family and let $X\in\D{\cat U}^c$ be non-zero. Then there exists $G\in \cat U$ such that 
 $e_G\in\thickt{X}$, i.e., $e_G$ is contained in the thick ideal generated by $X$.
\end{Cor}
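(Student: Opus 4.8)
The plan is to use \cref{thm-compact-torsion-free-element} to find a torsion-free homology class, feed it into \cref{lem-eG-eGX} to produce a split monomorphism in $\A{\cat U}$, and then upgrade that split monomorphism to a retract in $\D{\cat U}$. Concretely: since $X$ is non-zero and compact and $\cat U$ is a multiplicative global family, \cref{thm-compact-torsion-free-element} supplies an integer $n$, a group $G\in\cat U$, and a torsion-free element $x\in H_n(X)(G)$. I would then set $Y\coloneqq\Sigma^{-n}(e_G\otimes X)$. Since $\thickt{X}$ is a thick tensor-ideal it is closed under $-\otimes e_G$ and under (de)suspension, so $Y\in\thickt{X}$, and it therefore suffices to show that $e_G$ is a retract of $Y$ in $\D{\cat U}$. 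The Künneth isomorphism of \cref{lem-flat} (with $e_G$ concentrated in degree $0$) identifies $H_0(Y)\cong e_G\otimes H_n(X)$ in $\A{\cat U}$, and \cref{lem-eG-eGX} applied to $x$ gives a split monomorphism $i\colon e_G\to e_G\otimes H_n(X)=H_0(Y)$ in $\A{\cat U}$ with a retraction $r\colon H_0(Y)\to e_G$.

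The main step is to promote $i$ and $r$ to morphisms in $\D{\cat U}$ exhibiting $e_G$ as a retract of $Y$. The key observation is that the complex $e_G$, placed in degree $0$, is at once $K$-projective (being a complex of projectives, cf.\ \cref{thm-cofibrant}) and $K$-injective (being a bounded-below complex of injectives, using that $e_G$ is injective since $\cat U$ is multiplicative global, by \cite[Proposition 15.1]{PolStrickland2022}). From $K$-projectivity one obtains a natural isomorphism $\Hom_{\D{\cat U}}(e_G,Z)\cong\Hom_{\A{\cat U}}(e_G,H_0Z)$ for every $Z\in\D{\cat U}$, and from $K$-injectivity together with the exactness of $\Hom_{\A{\cat U}}(-,e_G)$ one obtains $\Hom_{\D{\cat U}}(Z,e_G)\cong\Hom_{\A{\cat U}}(H_0Z,e_G)$; in both cases the bijection is implemented by applying $H_0$. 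Hence $i$ and $r$ lift to maps $\tilde i\colon e_G\to Y$ and $\tilde r\colon Y\to e_G$ in $\D{\cat U}$ with $H_0(\tilde i)=i$ and $H_0(\tilde r)=r$, so that $H_0(\tilde r\tilde i)=ri=\id_{e_G}$; and since $\phi\mapsto H_0(\phi)$ is a bijection $\Hom_{\D{\cat U}}(e_G,e_G)\to\Hom_{\A{\cat U}}(e_G,e_G)$, this forces $\tilde r\tilde i=\id_{e_G}$. Therefore $e_G$ is a retract of $Y$ in $\D{\cat U}$, and as $\thickt{X}$ is closed under retracts we conclude $e_G\in\thickt{X}$.

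I expect the middle step to be the main obstacle: one has to set up the two $\Hom$-identifications carefully and verify their compatibility with $H_0$ and with composition. This is exactly where the injectivity of $e_G$---a consequence of the multiplicative-global hypothesis---is used to convert a split monomorphism in the abelian category into a retract in the derived category. A more computational alternative would be to first replace $X$ by a thin perfect complex via \cref{prop-perfect}, fix an explicit model of $Y$ in $\Ch(\P{\cat U})$, and build the required chain maps and homotopies by hand; but routing through $K$-injectivity is cleaner and avoids bookkeeping with the filtration.
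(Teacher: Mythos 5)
Your proposal is correct and follows essentially the same route as the paper: both reduce to exhibiting $e_G$ as a retract of $\Sigma^{-n}(e_G\otimes X)$ in $\D{\cat U}$ via the two identifications $\Hom_{\D{\cat U}}(\Sigma^n e_G, e_G\otimes X)\cong\Hom_{\A{\cat U}}(e_G, H_n(e_G\otimes X))$ and $\Hom_{\D{\cat U}}(e_G\otimes X,\Sigma^n e_G)\cong\Hom_{\A{\cat U}}(H_n(e_G\otimes X), e_G)$, which rest exactly on $e_G$ being projective and injective, and then invoke \cref{lem-eG-eGX}. You merely spell out the $K$-projective/$K$-injective justification and the compatibility with $H_0$ that the paper leaves implicit.
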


\begin{proof}
    By \cref{thm-compact-torsion-free-element} there exists a torsion-free element $x \in H_n(X)(G)$ for some $G \in \cat U$ and some $n \in \bbZ$. As $e_G$ is both injective and projective (see \cite[Proposition 15.1]{PolStrickland2022}), we have 
 \[
 \Hom_{\D{\cat U}}(\Sigma^n e_G,e_G\otimes X)\cong\Hom_{\A{\cat U}}(e_G,H_n(e_G\otimes X))
 \]
 and 
 \[
 \Hom_{\D{\cat U}}(e_G\otimes X,\Sigma^ne_G)\cong\Hom_{\A{\cat U}}(H_n(e_G\otimes X),e_G).
 \]
 It will
 therefore suffice to show that $e_G$ is a retract of the object
 $H_n(e_G\otimes X)\cong e_G\otimes H_n(X)$, and this follows from
 \cref{lem-eG-eGX}.  
\end{proof}

\section{Dualizable objects}\label{sec:dualisable}
In this section, we study the dualizable objects in $\D{\cat{U}}$, and show that $\D{\cat{U}}$ is rigidly-compactly generated if and only if $\cat{U}$ is a finite groupoid, see \cref{groupoidrigid}. We remind the reader that we are always working under \cref{hyp-basic}.

\begin{Def}
An object $X \in \D{\cat{U}}$ is \emph{dualizable} if the natural map 
\[
\iHom(X,\unit) \otimes Y \to \iHom(X,Y)
\]
is an isomorphism for all $Y \in \D{\cat{U}}$. We write $\D{\cat{U}}^{\dual}\subseteq \D{\cat{U}}$ for the full subcategory on the dualizable objects.
\end{Def}

\begin{Rem}
Let us write $\mathbb{D} = \iHom(-,\unit)$ for the functional dual functor. If $X$ is dualizable, then the right adjoint of $X \otimes -$ has the form $\mathbb{D}X \otimes -$.
\end{Rem}
\begin{Lem}\label{compactsaredualizable}
    Suppose that $\unit \in \A{\cat U}$ is finitely generated projective (e.g. if $1\in \cat U$ or $\cat U$ is a finite groupoid). If $X \in \D{\cat{U}}$ is dualizable, then it is compact.
\end{Lem}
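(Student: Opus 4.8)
The plan is to run the standard argument that dualizable objects are automatically compact as soon as the tensor unit is compact. First I would record that, since $\unit \in \A{\cat{U}}$ is finitely generated projective, the complex $\unit$ placed in degree zero is a perfect complex in the sense of \cref{defn-perfect}, and hence is a compact object of $\D{\cat{U}}$ by \cref{prop-perfect}. (When $1 \in \cat{U}$ this is just the observation that $\unit \cong e_1$, which is finitely generated projective.)

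Next, suppose $X \in \D{\cat{U}}$ is dualizable and write $\mathbb{D}X = \iHom(X,\unit)$. By the definition of dualizability, the natural map $\mathbb{D}X \otimes Y \to \iHom(X,Y)$ is an isomorphism for every $Y \in \D{\cat{U}}$. Combining this with the tensor--hom adjunction $\Hom_{\D{\cat{U}}}(\unit \otimes X, Y) \cong \Hom_{\D{\cat{U}}}(\unit, \iHom(X,Y))$ and the identification $\unit \otimes X \cong X$, I obtain a natural isomorphism
\[
\Hom_{\D{\cat{U}}}(X, Y) \cong \Hom_{\D{\cat{U}}}(\unit, \mathbb{D}X \otimes Y).
\]

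Finally, I would observe that the right-hand side commutes with arbitrary coproducts in $Y$: the functor $\mathbb{D}X \otimes -$ preserves coproducts (it is a left adjoint, or directly because coproducts in $\D{\cat{U}}$ are computed degreewise and pointwise while $\otimes$ is pointwise), and $\Hom_{\D{\cat{U}}}(\unit, -)$ preserves coproducts because $\unit$ is compact by the first step. Hence $\Hom_{\D{\cat{U}}}(X,-)$ preserves coproducts, i.e.\ $X$ is compact.

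The argument is essentially formal, so I do not anticipate a serious obstacle; the only point deserving care is the first step — it is finite generation of $\unit$, and not merely its projectivity, that is needed to land in the compact objects via \cref{prop-perfect}, since an infinite coproduct of the $e_G$ would be projective but not compact.
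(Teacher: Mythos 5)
Your proof is correct and follows essentially the same route as the paper: the paper likewise notes that $\unit$ is perfect (hence compact by \cref{prop-perfect}) and then chains the isomorphisms $\Hom_{\D{\cat{U}}}(X,\bigoplus Y_i)\cong\Hom_{\D{\cat{U}}}(\unit,\bigoplus \mathbb{D}X\otimes Y_i)\cong\bigoplus\Hom_{\D{\cat{U}}}(X,Y_i)$. Your closing remark about needing finite generation (not just projectivity) of $\unit$ is a correct and worthwhile point of care.
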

\begin{proof}
    Note that the unit $\unit$ is perfect and hence compact by \cref{prop-perfect}. Therefore if $X$ is dualizable, then \[\Hom_{\D{\cat{U}}}(X, \bigoplus Y_i) \cong \Hom_{\D{\cat{U}}}(\unit, \bigoplus \mathbb{D}X \otimes Y_i) \cong \bigoplus\Hom_{\D{\cat{U}}}(X, Y_i)\] showing that $X$ is compact.
\end{proof}

We now identify the subcategory of dualizable objects when $\cat{U}$ contains the trivial group.
\begin{Lem}\label{nonzeroat1}
    If $1\in \cat{U}$, and $X \in \D{\cat{U}}$ is non-zero and dualizable, then $H_*(X)(1)$ is non-zero.
\end{Lem}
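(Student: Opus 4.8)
The plan is to detect the non-vanishing of a dualizable object through its coevaluation map, using that $\unit\cong e_1$ because $1\in\cat{U}$. First I would record two preliminary observations. On the one hand, since $1\in\cat{U}$ we have $\unit\cong e_1$, so the computation in the proof of \cref{prop-comp-gen} gives a natural isomorphism $\Hom_{\D{\cat{U}}}(\unit,Z)\cong H_0\big(Z(1)\big)$ for every $Z\in\D{\cat{U}}$; in particular a morphism out of $\unit$ is zero in $\D{\cat{U}}$ precisely when it induces $0$ on $H_0$ of the value at $1$. On the other hand, the tensor product on $\A{\cat{U}}$ is computed pointwise and every object is flat, so for $Z,Z'\in\D{\cat{U}}$ one has $(Z\otimes Z')(1)\cong Z(1)\otimes_k Z'(1)$ in $\D{k}$, the tensor being the ordinary tensor of complexes.

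Next I would bring in dualizability. Writing $\mathbb{D}X=\iHom(X,\unit)$, the hypothesis that $X$ is dualizable furnishes, in the standard way recalled just above this lemma, a coevaluation $\eta\colon\unit\to X\otimes\mathbb{D}X$ and an evaluation $\varepsilon\colon\mathbb{D}X\otimes X\to\unit$ satisfying the triangle identity $(1_X\otimes\varepsilon)\circ(\eta\otimes 1_X)=1_X$ up to the coherence isomorphisms. If $\eta$ were zero, this identity would force $1_X=0$, i.e.\ $X\simeq 0$, contradicting the assumption that $X$ is non-zero. Hence $\eta\neq 0$ in $\D{\cat{U}}$, and by the first observation $\eta$ represents a non-zero class in $\Hom_{\D{\cat{U}}}(\unit,X\otimes\mathbb{D}X)\cong H_0\big((X\otimes\mathbb{D}X)(1)\big)$, so the complex $(X\otimes\mathbb{D}X)(1)$ is not acyclic.

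Finally I would unwind this. Using the second observation, $(X\otimes\mathbb{D}X)(1)\cong X(1)\otimes_k(\mathbb{D}X)(1)$, and by \cref{lem-flat}, evaluated at the trivial group, the homology of this complex is $H_*(X)(1)\otimes_k H_*(\mathbb{D}X)(1)$ (equivalently, this is the Künneth theorem over the field $k$). Since this graded vector space is non-zero, the tensor factor $H_*(X)(1)$ must itself be non-zero, which is exactly the assertion. The argument is short, and I do not expect a real obstacle; the one step carrying the content is the passage from "$X$ dualizable and non-zero'' to "$\eta\neq 0$'' via the triangle identities, combined with the fact that morphisms out of $\unit$ are detected on $H_0$ of the value at $1$ precisely because $1\in\cat{U}$ (without this hypothesis the statement would have no meaning).
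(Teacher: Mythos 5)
Your proof is correct and is essentially the paper's argument in a mildly different packaging: the paper detects non-acyclicity of $X(1)\otimes \mathbb{D}X(1)\cong\iHom(X,X)(1)$ via the identity class in $H_0(\iHom(X,X)(1))\cong\Hom_{\D{\cat{U}}}(X,X)\neq 0$, whereas you detect it via the coevaluation class in $H_0\big((X\otimes\mathbb{D}X)(1)\big)\cong\Hom_{\D{\cat{U}}}(\unit,X\otimes\mathbb{D}X)$ — and these correspond under the tensor--hom adjunction. Both then conclude with the K\"unneth isomorphism over the field $k$.
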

\begin{proof}
    As $X$ is dualizable and the evaluation functor preserves quasi-isomorphisms, we have 
    \[
    X(1) \otimes \mathbb{D}(X)(1)\cong \iHom(X,X)(1) \quad \mathrm{in}\; \D{\Mod{\bbQ}}.
    \]
    It is then enough to show that the right hand side is not acyclic. However  
    \[
    \iHom(X,X)_0(1)\cong \Hom_{\D{\cat U}}(X,X)\not =0
    \]
    since $X$ is non-zero.
\end{proof}

\begin{Prop}\label{dualisthick1}
 Suppose that $1\in\cat{U}$, and let $\Gr^f(\Mod{k})$ denote the
 category of graded vector spaces of finite total dimension over $k$.
 Then the functor $X\mapsto H_*(X)(1)$ gives an equivalence of categories from
 $\D{\cat{U}}^{\dual}$ to $\Gr^f(\Mod{k})$, and
 $\D{\cat{U}}^{\dual}=\thick{\unit}$.
\end{Prop}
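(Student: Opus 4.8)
The plan is to treat the two assertions separately, first the identification $\D{\cat{U}}^{\dual}=\thick{\unit}$ and then the equivalence $H_*(-)(1)\colon\D{\cat{U}}^{\dual}\xrightarrow{\sim}\Gr^f(\Mod k)$, using a good adjunction between $\D{\cat{U}}$ and $\D{\Mod k}$ together with a counit argument. The inclusion $\thick{\unit}\subseteq\D{\cat{U}}^{\dual}$ is formal: the dualizable objects of any closed symmetric monoidal triangulated category form a thick subcategory (closed under suspensions, retracts, and cofibres, by the five lemma applied to the comparison map $\iHom(-,\unit)\otimes Y\to\iHom(-,Y)$), and this subcategory contains $\unit$.

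For the reverse inclusion I would use the evaluation functor $j_1^*=\ev_1\colon\A{\cat{U}}\to\A{\{1\}}=\Mod k$. Its left adjoint $(j_1)_!$ sends a $k$-module $V$ to the constant functor with value $V$: this is the identification $e_{1,V}\cong V\otimes_k\unit$, using $\Out(1)=1$ and that $\Hom_{\cat{U}}(T,1)$ is a one-element set for every $T$. Hence $(j_1)_!$ is exact and fully faithful (as $j_1$ is) and sends $k$ to $\unit$, so $(j_1)_!\dashv j_1^*$ descends to an adjunction on derived categories with $(j_1)_!$ triangulated and fully faithful; in particular the unit $\eta\colon\id\to j_1^*(j_1)_!$ is an isomorphism. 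Now let $X\in\D{\cat{U}}^{\dual}$. By \cref{compactsaredualizable} (valid since $1\in\cat{U}$) it is compact, hence perfect by \cref{prop-perfect}, so $j_1^*X=X(1)$ is a perfect complex of $k$-modules and $(j_1)_!j_1^*X\in\thick{\unit}$. I would then consider the counit $\epsilon_X\colon(j_1)_!j_1^*X\to X$ with cofibre $C$. Applying the exact functor $j_1^*$ to the triangle $(j_1)_!j_1^*X\to X\to C$ and using that $j_1^*(\epsilon_X)$ is inverse to the isomorphism $\eta_{j_1^*X}$ (triangle identity), one gets $j_1^*C\simeq 0$, i.e.\ $H_*(C)(1)=0$. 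On the other hand $C$, being the cofibre of a map between dualizable objects, is dualizable, so \cref{nonzeroat1} forces $C\simeq 0$. Therefore $X\cong(j_1)_!j_1^*X\in\thick{\unit}$, so $\D{\cat{U}}^{\dual}=\thick{\unit}$.

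For the equivalence I would check that $(j_1)_!$ and $H_*(-)(1)=H_*(j_1^*(-))$ are mutually inverse between $\Gr^f(\Mod k)$ and $\thick{\unit}$. Under the identification $\D{\Mod k}\simeq\Gr(\Mod k)$ of \cref{cor-homology-equivalence-graded} the perfect complexes correspond to $\Gr^f(\Mod k)$; since $(j_1)_!$ is triangulated and sends $k$ to $\unit$ it carries $\Gr^f(\Mod k)$ into $\thick{\unit}$, and $H_*(-)(1)$ carries $\thick{\unit}$ into $\Gr^f(\Mod k)$ because objects of $\thick{\unit}$ are compact, hence perfect. One composite is the identity because $j_1^*(j_1)_!\cong\id$; the other because $\epsilon_X$ is an isomorphism whenever $X$ lies in the essential image of $(j_1)_!$ (triangle identity plus $\eta$ invertible), and the full subcategory of those $X$ with $\epsilon_X$ an isomorphism is thick (it is the kernel of the triangulated functor $X\mapsto\mathrm{cof}(\epsilon_X)$) and contains $\unit$, hence all of $\thick{\unit}$. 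I expect the only genuinely delicate point to be pinning down that $(j_1)_!$ really is an exact, fully faithful left adjoint of $\ev_1$ with $(j_1)_!k\cong\unit$ — this is what powers the counit argument; once that is in place, everything reduces to formal manipulations with the triangulated structure and the single non-formal input \cref{nonzeroat1}.
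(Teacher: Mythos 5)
Your proof is correct and follows essentially the same route as the paper: the key comparison map from the constant diagram on $X(1)$ back to $X$ (which the paper builds by hand by choosing representing cycles, and which you obtain as the counit of $(j_1)_!\dashv j_1^*$), with \cref{nonzeroat1} as the sole non-formal input, used to kill its cofibre. The only packaging difference is that you deduce finiteness of $H_*(X)(1)$ from compactness of dualizable objects via \cref{compactsaredualizable} and \cref{prop-perfect}, whereas the paper gets it from the fact that the strongly symmetric monoidal functor $X\mapsto H_*(X)(1)$ preserves dualizability.
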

\begin{proof}
 We have a functor $F(X)=H_*(X)(1)$ from $\D{\cat{U}}$ to
 $\Gr(\Mod{k})$ and a functor $C$ in the opposite direction sending
 $V$ to the constant functor with value $V$.  Both of these are
 strongly symmetric monoidal, so they preserve dualizability and
 restrict to give functors 
 $F\colon\D{\cat{U}}^{\dual}\to\Gr^f(\Mod{k})$ and 
 $C\colon\Gr^f(\Mod{k})\to\D{\cat{U}}^{\dual}$.  It is clear that
 $FC\simeq 1$. \cref{nonzeroat1} says that if $X$ is dualizable and $F(X)=0$ then $X\simeq 0$.  By applying this to cofibres, we see that $F\colon\D{\cat{U}}^{\dual}\to\Gr^f(\Mod{k})$ reflects isomorphisms. 
 
 Now suppose we have $X\in\D{\cat{U}}^{\dual}$.  We can choose bases for the vector spaces $H_i(X)(1)$ and then choose representing cycles in $Z_i(X)(1)$; these combine to give a morphism $f\colon CF(X)\to X$ such that $F(f)$ is an isomorphism.  By the previous paragraph, $f$ itself is an isomorphism. It follows that $F$ and $C$ are mutually inverse equivalences.  Finally, since the image of $C$ lies in $\thick{\unit}$, we see that $\thick{\unit}$ is contained in $\D{\cat{U}}^{\dual}$.  From this it follows that $\D{\cat{U}}^{\dual}=\thick{\unit}$.
\end{proof}

\begin{Rem}
    In particular, we see that $\D{\cat{U}}$ is seldom rigidly-compactly generated. In fact, we can make this more precise via the following theorem.
\end{Rem}

\begin{Thm}\label{groupoidrigid}
    Under \cref{hyp-basic}, the category $\D{\cat{U}}$ is rigidly-compactly generated if and only if $\cat{U}$ is a finite groupoid.
\end{Thm}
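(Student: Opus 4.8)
The plan is to reduce rigid-compact generation to a statement about the generators $e_G$ and then prove the two implications. Since $\D{\cat U}$ is always compactly generated with $\D{\cat U}^c=\thick{e_G\mid G\in\cat U}$ by \cref{prop-comp-gen,prop-perfect}, and since the dualizable objects always form a thick subcategory of $\D{\cat U}$ containing $\unit$, the equality $\D{\cat U}^c=\D{\cat U}^{\dual}$ holds if and only if every $e_G$ is dualizable and $\unit$ is compact; by \cref{compactsaredualizable} the second condition follows from the first together with $\unit$ being finitely generated projective (which, e.g., holds whenever $\cat U$ has finitely many isomorphism classes, by \cref{prop-minimal}). So the crux is to show that every $e_G$ is dualizable precisely when $\cat U$ is a groupoid.

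For ``$\Leftarrow$'', suppose $\cat U$ is a groupoid. Then every object of $\A{\cat U}$ is projective (\cref{ex-unit-projective}), so by \cref{cor-homology-equivalence-graded} and \cref{lem-flat} the homology functor is a symmetric monoidal equivalence $\D{\cat U}\simeq\Gr(\A{\cat U})$. Under it $e_G$ (in degree $0$) corresponds to a graded object which is bounded and finite-dimensional at each object of $\cat U$; such an object is dualizable, with dual its objectwise $k$-linear dual. Hence every $e_G$ is dualizable and $\D{\cat U}$ is rigidly-compactly generated.

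For ``$\Rightarrow$'' I argue contrapositively. Suppose $\cat U$ is not a groupoid and choose a non-isomorphism $\alpha\colon G\to H$ in $\cat U$; as $\alpha$ is a non-injective surjection of finite groups, $|H|<|G|$, so $\Hom_{\cat U}(H,G)=\emptyset$ and thus $e_G(H)=0$. I claim $e_G$ is not dualizable. Since $\P{\cat U}$ is closed under tensor products (\cref{prop-proj-closed-under-tensor}), the functor $\iHom(e_G,-)\colon\A{\cat U}\to\A{\cat U}$ is exact: it is computed at each $T\in\cat U$ by $\Hom_{\A{\cat U}}(e_T\otimes e_G,-)$, and $e_T\otimes e_G$ is projective. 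Consequently $\iHom(e_G,\unit)$ and $\iHom(e_G,e_G)$ are, as objects of $\D{\cat U}$, the honest objects $D$ and $\mathscr E$ of $\A{\cat U}$ given by the abelian internal homs. If $e_G$ were dualizable, the canonical comparison map $\iHom(e_G,\unit)\otimes e_G\to\iHom(e_G,e_G)$ would be an isomorphism $D\otimes e_G\xrightarrow{\ \sim\ }\mathscr E$ of degree-$0$ objects, hence an isomorphism in $\A{\cat U}$. Evaluating at $H$ yields a contradiction: the source is $D(H)\otimes e_G(H)=0$ since $e_G(H)=0$, whereas the target $\mathscr E(H)=\Hom_{\A{\cat U}}(e_H\otimes e_G,e_G)$ is nonzero, because the augmentation $\varepsilon\colon e_H\to\unit$, $[\beta]\mapsto 1$, induces a map $\varepsilon\otimes e_G\colon e_H\otimes e_G\to e_G$ carrying $[\alpha]\otimes[\id_G]$ to $[\id_G]\neq 0$ at the object $G$. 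So $e_G$ is not dualizable, whence $\D{\cat U}^c\neq\D{\cat U}^{\dual}$ and $\D{\cat U}$ is not rigidly-compactly generated.

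The substantive part is ``$\Rightarrow$'', and within it the key device is to probe dualizability of $e_G$ against $Y=e_G$ itself and then evaluate the resulting comparison of degree-$0$ objects at a group $H$ that is too small to surject onto $G$: the vanishing $e_G(H)=0$ annihilates the tensor side, while the augmentation $e_H\to\unit$ keeps the internal-hom side nonzero. The ``$\Leftarrow$'' direction is routine once one knows $\D{\cat U}\simeq\Gr(\A{\cat U})$.
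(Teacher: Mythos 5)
Your proof is correct, and while the overall architecture matches the paper's (reduce to the dualizability of the generators $e_G$; for the converse, evaluate the comparison map $\mathbb{D}e_G\otimes e_G\to\iHom(e_G,e_G)$ at the smaller group $H$), the decisive step of the ``only if'' direction is handled by a genuinely different and more elementary device. The paper shows $\iHom(e_G,e_G)(H)\cong\Hom_{\A{\cat{U}}}(e_G\otimes e_H,e_G)\neq 0$ by invoking the graph-subgroup decomposition of $e_G\otimes e_H$ from \cite[Proposition 4.11]{PolStrickland2022}: the graph $\Gamma$ of $\alpha$ is isomorphic to $G$, the summand $e_\Gamma^W$ is a retract of $e_G\otimes e_H$, and $\Hom_{\A{\cat{U}}}(e_\Gamma^W,e_G)\cong e_G(\Gamma)/W\neq 0$. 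You instead exhibit an explicit non-zero element, namely $\varepsilon\otimes\id_{e_G}$ for the augmentation $\varepsilon\colon e_H\to\unit$, detected at the object $G$ on the element $[\alpha]\otimes[\id_G]$; this is correct, avoids the external input entirely, and is arguably cleaner. You also spell out why the comparison map in $\D{\cat{U}}$ may be computed in $\A{\cat{U}}$ (exactness of $\iHom(e_G,-)$ via projectivity of $e_T\otimes e_G$), a point the paper asserts without comment. The ``if'' direction is essentially the paper's: you pass through the monoidal equivalence $\D{\cat{U}}\simeq\Gr(\A{\cat{U}})$ where the paper uses the equivalent product decomposition $\A{\cat{U}}\simeq\prod_G\Mod{k[\Out(G)]}$; both reduce to the dualizability of objectwise finite-dimensional graded vector spaces.

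One loose end: your opening reduction asserts that $\D{\cat{U}}^c=\D{\cat{U}}^{\dual}$ requires $\unit$ to be compact, yet in the groupoid case you never verify this (and for a groupoid with infinitely many isomorphism classes $\unit$ is not finitely generated, so it is dualizable but not compact). This is a tension with your own framing rather than with the paper, whose proof of the ``if'' direction likewise only establishes that every compact object is dualizable; it reflects an ambiguity in what ``rigidly-compactly generated'' is taken to mean, and you should either drop the compactness of $\unit$ from your criterion or restrict the ``if'' direction accordingly.
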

\begin{proof}
    Suppose that $\cat{U}$ is a finite groupoid. By \cref{compactsaredualizable}, we know that any dualizable object is compact, so we just need to verify that any compact object is dualizable. By \cref{prop-perfect}, any compact object is in the thick subcategory generated by $\{e_G \mid G \in \cat{U}\}$, so by a thick subcategory argument we can reduce to checking that each $e_G$ is dualizable. There is a symmetric monoidal equivalence of abelian categories 
    \begin{equation}\label{product}
    \A{\cat{U}} \simeq \prod_{G \in \pi_0\cat{U}} \Mod{k[\Out(G)]}.
    \end{equation}
    Accordingly, we may reduce to a single $G \in \pi_0\cat{U}$ and check that the canonical map
\[\Hom_k(k[\Out(G)], k) \otimes_{k} k[\Out(G)] \to \Hom_k(k[\Out(G)], k[\Out(G)])\]
is an isomorphism. This is clear since $k[\Out(G)]$ is finite dimensional over $k$.

Conversely, suppose that $\cat{U}$ is not a finite groupoid. There are two possibilities: either $\cat U$ is an infinite groupoid, or $\cat U$ is not a groupoid. In the first case, we still have a decomposition as in \eqref{product} from which we see that $\unit \in \A{\cat U}$ is projective but not finitely generated. By \cref{rem-projective-objects} this means that we can write $\unit$ as an infinite sum of the form  $\bigoplus_{i}e_{G_i, V_i}$. We now claim that $\unit \in \D{\cat U}$ is not compact, showing that $\D{\cat U}$ is not rigidly-compactly generated. Indeed if it were, then the identity map $\unit \to \unit \cong \bigoplus_{i}e_{G_i, V_i}$ would factor through a finite sum, say $\unit \hookrightarrow \bigoplus_{j=1}^n e_{G_{i_j}, V_{i_j}} \twoheadrightarrow \unit$, contradicting that $\unit$ is not finitely generated. 
For the second case, suppose that $\cat U$ is not a groupoid so that there is a morphism $\alpha\colon G \to H$ in $\cat{U}$ which is not an isomorphism. We will show that
\[\mathbb{D}e_G \otimes e_G \to \iHom(e_G, e_G)\] is not an isomorphism in $\A{\cat U}$, and this will imply that $e_G$ is not dualizable in $\D{\cat U}$. Evaluating at $H$, we see that
$(\mathbb{D}e_G \otimes e_G)(H) = 0$. On the other hand, we claim that
$\iHom(e_G,e_G)(H) \cong \Hom_{\A{\cat{U}}}(e_G \otimes e_H,e_G) \neq
0$. Let $\Gamma$ be the graph of $\alpha$ and note that
$\Gamma \cong G \in \cat{U}$. Write $W$ for the Weyl group of
$\Gamma$ in $G \times H$. By \cite[Proposition
4.11]{PolStrickland2022}, $e_\Gamma^W$ is a retract of
$e_G \otimes e_H$. Therefore the object
$\Hom_{\A{\cat{U}}}(e_\Gamma^W, e_G) \cong e_G(\Gamma)/W \neq 0$ is a
retract of $\Hom_{\A{\cat{U}}}(e_G \otimes e_H, e_G)$ and hence the
latter is also non-zero. Therefore $\D{\cat{U}}$ is not
rigidly-compactly generated.
\end{proof}

\begin{Rem}
    The characterisation of dualizable objects proved in \cref{dualisthick1} need not hold if $\cat{U}$ does not contain the trivial group. For example, suppose that $\cat{U}$ is a groupoid which does not contain the trivial group. Then the same argument as at the beginning of the proof of \cref{groupoidrigid} shows that each standard projective generator $e_G$ is dualizable, but none of them lie in $\thick{\unit}.$ Indeed, since all objects in $\A{\cat{U}}$ are projective, every complex is homotopy equivalent to its homology by \cref{lem-semisimple-complex}, so there is a symmetric monoidal equivalence of categories
    \[
    \D{\cat{U}} \xrightarrow{\sim} \prod_{G \in \pi_0\cat U}\Gr(\Mod{k[\Out(G)]}), \quad X \mapsto (H_*(X)(G))_G,
    \]
    and the claim then follows easily from this (as the target contains representations which are not necessarily trivial).
\end{Rem}

\section{The projective model structure}
\label{sec-proj-model}
In this section we prove the existence of the projective model structure on the category of chain complexes. 
One could also obtain this model structure by applying \cite[Theorem
11.6.1]{Hirschhorn} or the results of \cite{projclass}, but the
particular features of $\A{\cat{U}}$ allow us to give a self-contained
proof with explicit factorisations; it seems worthwhile to have an
example of this type in the literature. Recall that \cref{hyp-basic} is in place throughout the whole paper.

\begin{Def}\label{defn-model-structure}
 Consider a morphism $f\colon X\to Y$ in $\Ch(\A{\cat{U}})$.
 \begin{itemize}
  \item[(a)] We say that $f$ is a \emph{weak equivalence} if it is a
   quasi-isomorphism.
  \item[(b)] We say that $f$ is a \emph{cofibration} if it is a
   monomorphism, and $\cok(f)_n$ is projective in $\A{\cat{U}}$ for all
   $n$.
  \item[(c)] We say that $f$ is a \emph{fibration} if it is an
   epimorphism.
  \item[(d)] In view of \cref{cor-acyc-proj}, $f$ is an acyclic cofibration if and only if it is a monomorphism and $\cok(f)$ is a
   contractible complex of projective objects.
  \item[(e)] Similarly, $f$ is an acyclic fibration if it is an
   epimorphism with acyclic kernel.
 \end{itemize}
 We write $\we$, $\cof$, $\fib$, $\acf$ and $\afb$ for the five
 classes of maps defined above. We also use the symbols $\twoheadrightarrow$ and $\rightarrowtail$ to denote surjections and injections, respectively.
\end{Def}

We will show that the above definition gives a Quillen
model structure  that is monoidal, cofibrantly
generated and proper. We will
follow~\cite{Hovey} for the general theory of model categories; in
particular, our treatment of the axioms follows \cite[Definition 1.1.3]{Hovey}, using functorial replacements.
For the next definitions recall the functor $P$ from \cref{cons-P}, together with the chain map $\epsilon\colon PX \to X$.
\begin{Def}\label{defn-factor-M}
 Given a morphism $f\colon X\to Y$ in $\Ch(\A{\cat{U}})$, we define a complex $Mf$ by
 \begin{align*}
  (Mf)_n &= X_n\oplus  (PX)_{n-1}\oplus  (PY)_n & 
    d &= \bbm d & \epsilon & 0 \\ 0 & -d & 0 \\ 0 & -Pf & d \ebm .
\end{align*}
Since $\epsilon$ and $Pf$ are chain maps, one easily verifies that
$d^2 = 0$ so that $Mf$ is indeed a chain complex.  We also define
 \begin{align*}
  i_n &= \bbm 1 \\ 0 \\ 0 \ebm \colon  
   X_n \to X_n\oplus  (PX)_{n-1}\oplus  (PY)_n = (Mf)_n \\
  p_n &= \bbm f & 0 & \epsilon \ebm \colon  
   (Mf)_n = X_n\oplus  (PX)_{n-1}\oplus  (PY)_n \to Y_n.
 \end{align*}
\end{Def}

\begin{Prop}\label{prop-factor-M}
 Let $f\colon X \to Y$ in $\Ch(\A{\cat{U}})$. The maps
 $X\xrightarrow{i}Mf\xrightarrow{p}Y$ are chain maps, with $pi=f$.
 Moreover, $i$ is a cofibration and $p$ is an acyclic fibration.
\end{Prop}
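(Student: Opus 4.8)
The plan is to verify all the claimed properties of the factorization $X \xrightarrow{i} Mf \xrightarrow{p} Y$ in the order: (1) $i$ and $p$ are chain maps with $pi = f$; (2) $i$ is a cofibration; (3) $p$ is an epimorphism, hence a fibration; (4) $p$ is a quasi-isomorphism. The first step is a direct matrix computation: one checks $d^{Mf} i = i\, d^X$ and $p\, d^{Mf} = d^Y p$ using that $\epsilon\colon PX\to X$ and $Pf\colon PX\to PY$ are chain maps (\cref{cons-P}), and $pi = \bbm f & 0 & \epsilon\ebm\bbm 1\\0\\0\ebm = f$ is immediate from the block structure.

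For step (2), observe that $i_n$ is the inclusion of a direct summand, so $i$ is degreewise split mono, in particular a monomorphism, and its cokernel in degree $n$ is $(PX)_{n-1}\oplus (PY)_n$. Since $P$ takes values in $\Ch(\P{\cat U})$ (\cref{cons-P}), both $(PX)_{n-1}$ and $(PY)_n$ are projective in $\A{\cat U}$, so $\cok(i)_n$ is projective for all $n$; hence $i$ is a cofibration by \cref{defn-model-structure}(b). For step (3), $p_n = \bbm f & 0 & \epsilon\ebm$ is surjective because its restriction to the summand $(PY)_n$ is $\epsilon\colon (PY)_n \to Y_n$, which is an epimorphism (the augmentation $\epsilon\colon PY\to Y$ is a chain epimorphism by \cref{cons-P}); thus $p$ is a fibration.

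For step (4) — which I expect to be the only step requiring a genuine idea rather than bookkeeping — I would identify the kernel of $p$ and show it is acyclic, which by \cref{defn-model-structure}(e) makes $p$ an acyclic fibration, hence in particular a weak equivalence. Concretely, $\ker(p)_n$ consists of triples $(x, a, b)$ with $fx + \epsilon b = 0$; projecting onto the $(PX)_{n-1}$-coordinate and using that $\epsilon\colon PY\to Y$ is a surjective quasi-isomorphism (\cref{rem-epsilon-qi}), a short diagram chase identifies $\ker(p)$ with the mapping cone (up to shift) of the chain map $Pf\colon PX\to PY$ precomposed appropriately — more cleanly, one can argue that $\ker p$ sits in a short exact sequence with the contractible complex $\Delta(\text{something})$ and an acyclic piece, or simply compute $H_*(\ker p)$ via the long exact sequence from $\ker p \rightarrowtail Mf \twoheadrightarrow Y$ together with an independent computation of $H_*(Mf)$. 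The slickest route: the summand $X_n \oplus (PX)_{n-1}$ with the displayed top-left $2\times 2$ block $\bbm d & \epsilon\\ 0 & -d\ebm$ is precisely the mapping cone of $\epsilon\colon PX\to X$, which is acyclic since $\epsilon$ is a quasi-isomorphism; filtering $Mf$ by this subcomplex, the quotient is $PY$ with differential $d$, and the connecting map on homology is induced by $Pf$ followed by the inverse of $H_*(\epsilon)$, i.e. essentially $H_*(f)$ up to the identification $H_*(PX)\cong H_*(X)$. Chasing the resulting long exact sequence shows $H_*(p)$ is an isomorphism. The main obstacle is getting the signs in the differential of $Mf$ to line up so that the cone decomposition is literally correct; once that is pinned down, everything else is formal.

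Since $p$ is then an epimorphism with acyclic kernel, it is an acyclic fibration by \cref{defn-model-structure}(e), completing the proof.
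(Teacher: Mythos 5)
Your steps (1)--(3) match the paper's proof exactly: the matrix computation for $p$ being a chain map, the identification $\cok(i)_n=(PX)_{n-1}\oplus(PY)_n$ with projective entries, and surjectivity of $p$ via $\epsilon\colon PY\to Y$. The interesting divergence is step (4), where the paper factors $p$ as $Mf\xrightarrow{p_1}A\xrightarrow{p_2}B\xrightarrow{p_3}Y$ through two intermediate complexes, checking that $\ker(p_1)\cong\ker(\epsilon_Y)$ and $\ker(p_2)\cong\Sigma\ker(\epsilon_X)$ are acyclic and exhibiting an explicit contracting homotopy showing $p_3$ is a homotopy equivalence. Your filtration idea is a genuinely different and, once repaired, arguably slicker route.

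The repair is needed because your stated decomposition has the subcomplex and quotient reversed, and this is a structural issue rather than a sign issue. With the differential $d=\bbm d & \epsilon & 0\\ 0&-d&0\\ 0&-Pf&d\ebm$, an element $(x,a,0)$ of $X_n\oplus(PX)_{n-1}$ is sent to $(dx+\epsilon a,\,-da,\,-Pf(a))$, whose third component is generally non-zero; so $X\oplus\Sigma PX$ is \emph{not} a subcomplex of $Mf$. Rather, $PY$ (the third summand) is a subcomplex, and $X\oplus\Sigma PX$ with the block $\bbm d&\epsilon\\0&-d\ebm$ is the \emph{quotient}, which is indeed the mapping cone of $\epsilon\colon PX\to X$ and hence acyclic. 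The corrected argument then runs: the short exact sequence $PY\rightarrowtail Mf\twoheadrightarrow C(\epsilon_X)$ with acyclic cokernel shows the inclusion $PY\to Mf$ is a quasi-isomorphism; the composite $PY\to Mf\xrightarrow{p}Y$ is $\epsilon_Y$, which is a quasi-isomorphism by \cref{rem-epsilon-qi}; so $p$ is a quasi-isomorphism by two-out-of-three. (This last identification of the composite with $\epsilon_Y$ is the step your sketch omits; the long exact sequence alone gives $H_*(Mf)\cong H_*(Y)$ abstractly but not that $H_*(p)$ realizes the isomorphism.) You flagged the bookkeeping as the main obstacle, and you were right to — but what needs fixing is which piece is the subobject, not the signs.
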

\begin{proof}
Using the naturality of $\epsilon$ as well as
 the fact that $f$ and $\epsilon$ are chain maps, we have 
 \[ dp-pd = 
     \bbm df-fd & \epsilon(Pf)-f\epsilon & d\epsilon - \epsilon d \ebm = 0,
 \]
 so $p$ is a chain map.  It is clear that $i$ is a chain map and that
 $pi=f$.  It is also clear that $i$ is a monomorphism with
 $\cok(i)_n=(PX)_{n-1}\oplus (PY)_n$, so $i$ is a cofibration.  As
 $\epsilon\colon PY\to Y$ is an epimorphism, the same is true of $p\colon Mf\to Y$.
 All that is left is to prove that $\ker(p)$ is acyclic, or
 equivalently, that $p$ is a quasi-isomorphism.  For this, we introduce
 intermediate complexes $A$ and $B$ as follows:
 \begin{align*}
  A_n &= X_n\oplus  (PX)_{n-1}\oplus  Y_n & 
    d &= \bbm d & \epsilon & 0 \\ 0 & -d & 0 \\ 0 & -f\epsilon & d \ebm \\
  B_n &= X_n\oplus  X_{n-1}\oplus  Y_n & 
    d &= \bbm d & 1 & 0 \\ 0 & -d & 0 \\ 0 & -f & d \ebm.
 \end{align*}
 We find that $p$ can be factored as 
 \[ Mf \xrightarrow{p_1} A \xrightarrow{p_2} B \xrightarrow{p_3} Y, \]
 where $p_1=1\oplus 1\oplus \epsilon$ and $p_2=1\oplus \epsilon\oplus  1$ and 
 $p_3=\bbm f&0&1\ebm$.  All three of these maps are epimorphisms.
 For the first two, we have $\ker(p_1)\cong\ker(\epsilon_Y)$ and
 $\ker(p_2)\cong\Sigma\ker(\epsilon_X)$.  As $\epsilon_X$ and $\epsilon_Y$ are
 surjective quasi-isomorphisms, these kernels are acyclic, so $p_1$ and
 $p_2$ are quasi-isomorphisms.  For the third, we note that the
 inclusion $j\colon Y\to B$ is a chain map, with $p_3j=1$.  We can define 
 \[ s_n = \bbm 0&0&0 \\ 1&0&0 \\ 0&0&0 \ebm \colon  
     B_n = X_n\oplus  X_{n-1}\oplus  Y_n \to 
           X_{n+1}\oplus  X_n\oplus  Y_{n+1} = B_{n+1},
 \]
 and we find that 
 \[ ds+sd = \bbm 1&0&0 \\ 0&1&0 \\ -f&0&0 \ebm = 1-jp_3, \]
 so $p_3$ is a homotopy equivalence and therefore a
 quasi-isomorphism. 
\end{proof}

\begin{Def}\label{defn-factor-N}
 Given a morphism $f\colon X\to Y$ in $\Ch(\A{\cat{U}})$, we define a
 complex $Nf$ by
 \begin{align*}
  (Nf)_n &= X_n\oplus  (PY)_{n+1}\oplus  (PY)_n & 
    d &= \bbm d & 0 & 0 \\ 0 & -d & 1 \\ 0 & 0 & d \ebm .
    \end{align*}
    One easily checks that $d^2 = 0$ so that the above does define a chain complex. We also define
 \begin{align*}
  j_n &= \bbm 1 \\ 0 \\ 0 \ebm \colon  
   X_n \to X_n\oplus  (PY)_{n+1}\oplus  (PY)_n = (Nf)_n \\
  q_n &= \bbm f & 0 & \epsilon \ebm \colon  
   (Nf)_n = X_n\oplus  (PY)_{n+1}\oplus  (PY)_n \to Y_n.
 \end{align*}
\end{Def}
\begin{Prop}\label{prop-factor-N}
 Let $f\colon X \to Y$ in $\Ch(\A{\cat{U}})$.  The maps
 $X\xrightarrow{j}Nf\xrightarrow{q}Y$ are chain maps, with $qj=f$.
 Moreover, $j$ is an acyclic cofibration and $q$ is a fibration.
\end{Prop}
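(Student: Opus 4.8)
The plan is to dispatch the four assertions by direct matrix calculation, in the same spirit as the proof of \cref{prop-factor-M}. First I would verify the chain-map conditions: composing the differential of $Nf$ with $j$ simply extracts the first column $\bbm d\\0\\0\ebm$, which is $j\circ d^X$, so $j$ is a chain map; composing $q$ with the differential of $Nf$ gives $\bbm fd & 0 & \epsilon d\ebm$, which equals $d^Y\circ q=\bbm d^Y f & 0 & d^Y\epsilon\ebm$ precisely because both $f$ and $\epsilon\colon PY\to Y$ are chain maps. The relation $qj=f$ is then immediate from $\bbm f & 0 & \epsilon\ebm\bbm 1\\0\\0\ebm=f$.

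Next, $q$ is a fibration because each component $q_n=\bbm f & 0 & \epsilon\ebm$ is already surjective on its last coordinate: recall from \cref{cons-P} that $\epsilon\colon PY\to Y$ is a chain epimorphism. Hence $q$ is an epimorphism, i.e. a fibration, with no further work.

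The substantive step is showing that $j$ is an acyclic cofibration. Each $j_n$ is the inclusion of a direct summand, hence a monomorphism, and the cokernel complex $Z$ has $Z_n=(PY)_{n+1}\oplus(PY)_n$ — projective, since $PY\in\Ch(\P{\cat{U}})$ — with differential the lower-right block $\bbm -d & 1\\0 & d\ebm$ of the differential of $Nf$. I would then exhibit the explicit contraction $s_n=\bbm 0 & 0\\1 & 0\ebm\colon Z_n\to Z_{n+1}$, for which a one-line computation gives $ds+sd=1$; alternatively, one recognises $Z$ as (a shift of) the mapping cone of $\mathrm{id}_{PY}$, which is contractible, e.g. by \cref{prop-im-Dl}. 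By the description of acyclic cofibrations recorded in \cref{defn-model-structure}(d) — a monomorphism with contractible, degreewise projective cokernel — this proves the claim. I do not expect any genuine obstacle here; the only point requiring attention is keeping the signs in the differential of $Nf$ consistent with those in the candidate contraction $s$.
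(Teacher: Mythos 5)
Your proposal is correct and follows essentially the same route as the paper's proof: direct verification of the chain-map identities, surjectivity of $q$ from surjectivity of $\epsilon$, and exhibiting the same explicit contraction $s_n=\bsm 0&0\\1&0\esm$ on the cokernel $(PY)_{n+1}\oplus(PY)_n$ with differential $\bsm -d&1\\0&d\esm$. The alternative identification of the cokernel as a contractible complex via \cref{prop-im-Dl} is also valid but unnecessary.
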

\begin{proof}
 Using the fact that $f$ and $\epsilon$ are chain maps, it is
 straightforward to check that $dj=jd$, $dq=qd$ and
 $qj=f$. As $\epsilon$ is an epimorphism, we see that the same is true of
 $q$, so $q$ is a fibration.  It is clear that $j$ is a monomorphism
 with $\cok(j)_n=(PY)_{n+1}\oplus (PY)_n$, which is projective.  The
 differential on $\cok(j)$ is $\bbm -d&1\\0&d\ebm$, so the 
 matrix $s_n=\bbm 0&0\\1&0\ebm$ gives a map
 $\cok(j)_n\to\cok(j)_{n+1}$ with $ds+sd=1$, proving that $\cok(j)$ is
 contractible. 
\end{proof}

We will also need some facts about lifting properties that are most
naturally formulated in a general abelian category.

\begin{Prop}\label{prop-lift-abelian}
 Let $\cat{A}$ be an abelian category, and let $A\xrightarrow{i}B\xrightarrow{p}C$ and
 $K\xrightarrow{j}L\xrightarrow{q}M$ be short exact sequences.  For any diagram as
 shown,
 \begin{center}
  \begin{tikzcd}[ampersand replacement=\&]
  {A} \arrow[d,rightarrowtail,"i"'] \arrow[r,"f"] \&
  {L} \arrow[d,twoheadrightarrow,"q"] \\
  {B} \arrow[r,"g"'] \&
  {M}
  \end{tikzcd}
 \end{center}
 we let $H(f,g)$ denote the set of lifts, i.e., maps $h\colon B\xrightarrow{}L$ such that $qh=g$
 and $hi=f$.  Then there is a naturally defined extension
 $K\xrightarrow{}T(f,g)\xrightarrow{}C$ such that splittings of $T(f,g)$ biject with
 $H(f,g)$.  In particular, if $\Ext^1_{\cat{A}}(C,K)=0$, then $H(f,g)$ is
 always nonempty, so $i$ has the left lifting property against
 $q$. 
\end{Prop}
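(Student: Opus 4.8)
The plan is to build $T(f,g)$ from a pullback and then match lifts with splittings by an explicit idempotent argument. First I would form the pullback $B\times_M L$, with projections $\pi_B\colon B\times_M L\to B$ and $\pi_L\colon B\times_M L\to L$. Since $q$ is an epimorphism and pullbacks of epimorphisms in an abelian category are again epimorphisms, $\pi_B$ is an epimorphism, and its kernel is canonically $\ker(q)=K$. A section $s$ of $\pi_B$ is the same datum as a morphism $h=\pi_L s\colon B\to L$ with $qh=g$, so sections of $\pi_B$ account for exactly half of the lifting condition. The equality $qf=gi$ gives a monomorphism $\iota=(i,f)\colon A\to B\times_M L$ with $\pi_B\iota=i$ and $\pi_L\iota=f$, and for a section $s$ the condition $si=\iota$ is equivalent to $hi=f$. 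Hence $H(f,g)$ is naturally identified with the set of sections $s$ of $\pi_B$ satisfying $si=\iota$.

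Next I would set $T(f,g)\coloneqq\cok(\iota)$, with quotient map $\rho\colon B\times_M L\to T(f,g)$. Applying the snake lemma to the morphism of short exact sequences from $(0\to 0\to A\xrightarrow{\mathrm{id}}A\to 0)$ to $(0\to K\to B\times_M L\xrightarrow{\pi_B}B\to 0)$, with vertical maps $0$, $\iota$ and $i$ (all monomorphisms), produces a short exact sequence $0\to K\to T(f,g)\xrightarrow{\bar p}C\to 0$; all of this is visibly natural in $(f,g)$, which supplies the ``naturally defined extension'' of the statement.

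The core of the argument is then the bijection between $H(f,g)$ and the set of splittings of $0\to K\to T(f,g)\to C\to 0$. In one direction, given $s\in H(f,g)$ the composite $\rho s\colon B\to T(f,g)$ satisfies $\rho s\, i=\rho\iota=0$, so it factors through $p\colon B\to C$ as some $\bar s\colon C\to T(f,g)$; using $\bar p\rho=p\pi_B$ and that $p$ is epi one checks $\bar p\bar s=1_C$, so $\bar s$ is a splitting. In the other direction, a splitting corresponds to a retraction $r\colon T(f,g)\to K$, and then $e\coloneqq(K\hookrightarrow B\times_M L)\circ r\circ\rho$ is an idempotent endomorphism of $B\times_M L$ with image $\ker(\pi_B)=K$ and with $e\iota=0$; consequently $B\times_M L=\ker(\pi_B)\oplus\im(1-e)$, the map $\pi_B$ restricts to an isomorphism $\im(1-e)\xrightarrow{\sim}B$, and its inverse is a section $s$ of $\pi_B$; since $(1-e)\iota=\iota$ we have $\iota(A)\subseteq\im(1-e)$, and comparing under the monomorphism $\pi_B|_{\im(1-e)}$ gives $si=\iota$, so $s\in H(f,g)$. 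I would then check that these two constructions are mutually inverse; the quickest route is to observe that both $H(f,g)$ and the set of splittings are (possibly empty) torsors under $\Hom_{\cat A}(C,K)$ and that $s\mapsto\bar s$ respects these actions, so it is a bijection as soon as either side is nonempty, while the two constructions above show nonemptiness of one side forces nonemptiness of the other.

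Finally, if $\Ext^1_{\cat A}(C,K)=0$, then the extension $0\to K\to T(f,g)\to C\to 0$ splits, hence $H(f,g)\neq\emptyset$ for every square of the given shape; this is exactly the assertion that $i$ has the left lifting property against $q$. The step I expect to be the main obstacle is the reverse direction of the bijection: producing a section of $\pi_B$ from a splitting of $T(f,g)$ requires the idempotent $e$ and the verification that $\im(1-e)$ is a complement of $\ker(\pi_B)$ containing $\iota(A)$, and one must be a little careful to confirm that the two assignments are genuinely inverse rather than merely both being torsors of the same type.
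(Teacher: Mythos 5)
Your proof is correct, and the object you construct is literally the paper's $T(f,g)$: the paper realizes it as the middle homology of a short exact sequence of two-term complexes (equivalently, via the snake lemma), which is exactly $\cok\bigl((i,f)\colon A\to B\times_M L\bigr)$, and your forward map (lift $\mapsto$ splitting) agrees with theirs. Where you genuinely diverge is in the two remaining steps. For splitting $\mapsto$ lift, the paper observes that the square formed by $\tq\colon Z\to B$, $\tp\colon Z\to T$, $r\colon T\to C$, $p\colon B\to C$ is itself a pullback, so a section $n$ of $r$ induces by the universal property a unique $\tn\colon B\to Z$ with $\tq\tn=1$ and $\tp\tn=np$, whence the lift $h=\tg\tn$; you instead split the idempotent $e=\tk r\rho$ and invert $\pi_B$ on $\im(1-e)$. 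Both work; the paper's route avoids invoking splitting of idempotents and the identification $\im(1-e)=\ker(e)$, while yours avoids verifying the extra pullback. For mutual inverseness, the paper simply leaves the check to the reader, whereas your torsor argument (both sides are $\Hom_{\cat{A}}(C,K)$-torsors when nonempty, the forward map is equivariant, and the backward construction transfers nonemptiness) is a clean and arguably more satisfying way to finish; just make sure you actually record the equivariance computation, since that is the one assertion you currently state without proof.
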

\begin{proof}
 Consider the following diagram:
 \begin{center}
  \begin{tikzcd}[ampersand replacement=\&]
  {0}
   \arrow[r]
   \arrow[d] \&
  {A}
   \arrow[r,=]
   \arrow[d,rightarrowtail,"{\bsm i\\ f\esm}"] \&
  {A}
   \arrow[d,rightarrowtail,"{i}"] \\
  {L}
   \arrow[r,rightarrowtail,"{\bsm 0\\ 1\esm}"]
   \arrow[d,twoheadrightarrow,"{q}"'] \&
  {B\oplus  L}
   \arrow[r,twoheadrightarrow,"{\bsm 1 & 0\esm}"] 
   \arrow[d,twoheadrightarrow,"{\bsm -g & q\esm}"] \&
  {B}
   \arrow[d] \\
  {M}
   \arrow[r,=] \&
  {M}
   \arrow[r] \&
  {0.}
  \end{tikzcd}
 \end{center}
 Each column can be regarded as a complex, so the whole diagram is a
 short exact sequence of complexes, leading to a long exact sequence
 of homology groups.  This long exact sequence has only three non-zero
 terms, so it gives a short exact sequence of the form
 $K\xrightarrow{k}T\xrightarrow{r}C$, where $T=T(f,g)$ is the unique homology group of
 the middle column.  

 If $h\in H(f,g)$, then consider the following diagram:
 \begin{center}
  \begin{tikzcd}[ampersand replacement=\&]
  {0}
   \arrow[d] \&
  {A}
   \arrow[l]
   \arrow[d,rightarrowtail,"{\bsm i\\ f\esm}"] \&
  {A}
   \arrow[l,=]
   \arrow[d,rightarrowtail,"{i}"] \\
  {L}
   \arrow[d,twoheadrightarrow,"{q}"'] \&
  {B\oplus  L}
   \arrow[l,twoheadrightarrow,"{\bsm -h & 1\esm}"'] 
   \arrow[d,twoheadrightarrow,"{\bsm -g & q\esm}"] \&
  {B}
   \arrow[l,rightarrowtail,"{\bsm 1\\ h\esm}"']
   \arrow[d] \\
  {M} \&
  {M}
   \arrow[l,=] \&
  {0.}
   \arrow[l]
  \end{tikzcd}
 \end{center}
 The columns are the same complexes as before, and the horizontal maps
 give a splitting of our previous short exact sequence of complexes,
 and so induce a splitting of the homology group $T$.  

 For the opposite correspondence, we need more information about $T$.
 Let $Z$ be the corresponding group of cycles, which is the kernel of
 the map $(-g,q)\colon B\oplus  L\xrightarrow{}M$, or in other words, the pullback of
 $q$ and $g$.  We name the maps in the pullback square as follows:
 \begin{center}
  \begin{tikzcd}[ampersand replacement=\&]
  {Z}
   \arrow[r,"{\tg}"]
   \arrow[d,twoheadrightarrow,"{\tq}"'] \&
  {L}
   \arrow[d,twoheadrightarrow,"{q}"]  \\
  {B}
   \arrow[r,"{f}"'] \&
  {M.}
  \end{tikzcd}
 \end{center}
 Thus $\tg$ and $\tq$ are just the projections $B\oplus  L\xrightarrow{}L$ and
 $B\oplus  L\xrightarrow{}B$, restricted to $Z$.

 The differential in our complex is the map $\ti\coloneqq (i,f)\colon A\xrightarrow{}Z$, so
 $T$ is by definition the cokernel of $\ti$; we write $\tp\colon Z\xrightarrow{}T$
 for the quotient map.  We write $\tk\coloneqq (0,j)\colon K\xrightarrow{}Z$.  One checks
 that the following diagram 
 commutes:
 \begin{center}
  \begin{tikzcd}[ampersand replacement=\&]
  \& {K}
   \arrow[r,=]
   \arrow[d,rightarrowtail,"{\tk}"'] \&
  {K}
   \arrow[d,rightarrowtail,"{k}"] \\
  {A}
   \arrow[d,=]
   \arrow[r,rightarrowtail,"{\ti}"] \&
  {Z}
   \arrow[r,twoheadrightarrow,"{\tp}"]
   \arrow[d,twoheadrightarrow,"{\tq}"'] \&
  {T}
   \arrow[d,twoheadrightarrow,"{r}"]  \\
  {A}
   \arrow[r,rightarrowtail,"{i}"'] \&
  {B}
   \arrow[r,twoheadrightarrow,"{p}"'] \&
  {C.}
  \end{tikzcd}
 \end{center}
 We also see (by inspection of definitions and diagram chasing) that
 all rows and columns are exact, and that the bottom right square is a
 pullback. 

 Now suppose we are given a splitting of the sequence
 $K\xrightarrow{k}T\xrightarrow{r}C$, given by a map $n\colon C\xrightarrow{}T$ with $rn=1$.  By
 the pullback property, there is a unique map $\tn\colon B\xrightarrow{}Z$ with
 $\tq\tn=1_B$ and $\tp\tn=np\colon B\xrightarrow{}T$.  We claim that the map
 $h\coloneqq\tg\tn\colon B\xrightarrow{}L$ lies in $H(f,g)$.  Indeed, we first have
 $qh=q\tg\tn=g\tq\tn=g$.  Next, one checks that $\tp(\ti-\tn i)=0$ and 
 $\tq(\ti-\tn i)=0$ so the pullback property tells us that
 $\tn i=\ti\colon A\xrightarrow{}Z$.  This gives $hi=\tg\ti=f$ as required.

 We leave it to the reader to check that the above constructions are
 mutually inverse.
\end{proof}

\begin{Thm}\label{thm-proj-model-structure}\leavevmode
 Let $\cat U$ be a family satisfying \cref{hyp-basic}. Then \cref{defn-model-structure} gives a proper monoidal model
 structure on $\Ch(\A{\cat{U}})$.  In more detail:
 \begin{itemize}
  \item[(a)] Given morphisms $X\xrightarrow{f}Y\xrightarrow{g}Z$ in
   $\Ch(\A{\cat{U}})$, if two of $\{f,g,gf\}$ are weak equivalences
   then so is the third.
  \item[(b)] The classes $\we$, $\cof$, $\acf$, $\fib$ and $\afb$ are
   all closed under retracts and under composition.
  \item[(c)] Every cofibration has the left lifting property against
   every acyclic fibration, and every acyclic cofibration has the left
   lifting property against every fibration.
  \item[(d)] Every morphism in $\Ch(\A{\cat{U}})$ can be factored as a
   cofibration followed by an acyclic fibration, or as an acyclic
   cofibration followed by a fibration.
  \item[(e)] If $\unit'\to\unit$ is a cofibrant replacement, then the resulting map $\unit'\otimes X\to\unit\otimes X=X$ is a weak equivalence for all $X$.
  \item[(f)] Suppose that $f\colon U\to V$ and $g\colon X\to Y$ are
   cofibrations.  Let $P$ be the pushout of the maps 
   $V\otimes X\xleftarrow{f\otimes 1}U\otimes X
    \xrightarrow{1\otimes g}U\otimes Y$, and let $h$ be the
   induced map from $P$ to $V\otimes Y$.  Then $h$ is a cofibration, which
   is acyclic if either $f$ or $g$ is acyclic.
  \item[(g)] The pullback of a weak equivalence along a fibration is a
   weak equivalence, and the pushout of weak equivalence along a
   cofibration (or more generally, a monomorphism) is also a weak
   equivalence. 
 \end{itemize}
\end{Thm}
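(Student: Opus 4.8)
The plan is to verify the axioms of a Quillen model structure in the form of \cite[Definition 1.1.3]{Hovey}, and then the monoidal and properness conditions, leaning on the preliminary results of the previous sections. Bicompleteness of $\Ch(\A{\cat{U}})$ is automatic since $\A{\cat{U}}$ is Grothendieck (\cref{prop-ebullet}). Part (a) is immediate from the definition of quasi-isomorphism, as isomorphisms of graded objects satisfy two-out-of-three. For part (b) it suffices to show that $\we$, $\cof$ and $\fib$ are each closed under retracts and composition: indeed \cref{cor-acyc-proj} shows that a complex of projectives is acyclic precisely when it is contractible, so $\acf=\cof\cap\we$ and $\afb=\fib\cap\we$, and the two remaining classes then inherit the closure properties formally. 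Isomorphisms, monomorphisms and epimorphisms are visibly stable under retracts and composition, while the degreewise projectivity of cokernels distinguishing $\cof$ inside the monomorphisms passes to retracts (a retract of a projective is projective) and to composites (an extension of a degreewise projective complex by a degreewise projective one splits in each degree, hence is degreewise projective).

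The heart of the matter is (c) and (d). For the lifting axiom (c) I would apply \cref{prop-lift-abelian} in the abelian category $\Ch(\A{\cat{U}})$: a cofibration $i$ lifts on the left against a map $q$ as soon as $\Ext^1_{\Ch(\A{\cat{U}})}(\cok(i),\ker(q))=0$. When $q$ is an acyclic fibration, $\ker(q)$ is acyclic and $\cok(i)$ is a complex of projectives, so this $\Ext$-group vanishes by \cref{cor-no-ext}. When instead $i$ is an acyclic cofibration, $\cok(i)$ is a contractible complex of projectives, hence a projective object of $\Ch(\A{\cat{U}})$ by \cref{cor-contr-proj}, so $\Ext^1(\cok(i),-)$ vanishes identically and $i$ lifts against any fibration. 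Part (d) is supplied directly by \cref{prop-factor-M} (cofibration followed by acyclic fibration) and \cref{prop-factor-N} (acyclic cofibration followed by fibration); since the functor $P$ of \cref{cons-P} is functorial, so are the constructions $Mf$ and $Nf$, giving functorial factorisations. Together with (a) and (b) this shows that \cref{defn-model-structure} is a model structure.

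For the unit axiom (e), a cofibrant replacement $\unit'\to\unit$ is in particular a quasi-isomorphism, so by \cref{lem-flat} the map $\unit'\otimes X\to\unit\otimes X=X$ is a quasi-isomorphism for every $X$. For the pushout-product axiom (f), the evaluation functors $\psi_G$ of \cref{rec-pointwise} are exact, strong symmetric monoidal, colimit-preserving and jointly conservative, so applying them reduces the statement to the classical pushout-product axiom for complexes of $k$-vector spaces; explicitly, $h$ is a monomorphism with $\cok(h)\cong\cok(f)\otimes\cok(g)$. As $\cok(f)$ and $\cok(g)$ are degreewise projective, so is $\cok(f)\otimes\cok(g)$ by \cref{prop-proj-closed-under-tensor}, whence $h\in\cof$; and if $f$ (or $g$) is acyclic then $\cok(f)$ (resp.\ $\cok(g)$) is contractible, hence so is $\cok(f)\otimes\cok(g)$ (tensor a contracting homotopy with the identity), so $h\in\acf$.

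Finally, for properness (g) the point is that in an abelian category pushouts along monomorphisms and pullbacks along epimorphisms are compatible with cokernels and kernels. For a pushout of a quasi-isomorphism $w\colon X\to X'$ along a monomorphism $X\to Y$, the map $X'\to Y'$ is again a monomorphism and the induced map $\cok(X\to Y)\to\cok(X'\to Y')$ is an isomorphism; comparing the long exact homology sequences of $0\to X\to Y\to\cok(X\to Y)\to 0$ and $0\to X'\to Y'\to\cok(X'\to Y')\to 0$ and invoking the five lemma shows $Y\to Y'$ is a quasi-isomorphism, giving left properness (even along arbitrary monomorphisms). Dually, for a pullback of a quasi-isomorphism $f\colon A\to B$ along a fibration $g\colon C\to B$, the projection $A\times_B C\to A$ is an epimorphism with kernel isomorphic to $\ker(g)$, and comparing the long exact homology sequences of $0\to\ker(g)\to A\times_B C\to A\to 0$ and $0\to\ker(g)\to C\to B\to 0$ via the five lemma shows the projection $A\times_B C\to C$ is a quasi-isomorphism, giving right properness. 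The main obstacle I anticipate is organising (c): once \cref{prop-lift-abelian} is in hand the lifting axiom is a clean $\Ext$-vanishing statement, but matching the two cases to the two available vanishing results (\cref{cor-no-ext} and the projectivity of contractible complexes from \cref{cor-contr-proj}) is the crux; the rest is routine, with (f) carrying the most bookkeeping through the identification $\cok(h)\cong\cok(f)\otimes\cok(g)$.
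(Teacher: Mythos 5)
Your proposal is correct and follows essentially the same route as the paper: \cref{prop-lift-abelian} plus the two Ext-vanishing results (\cref{cor-no-ext}, \cref{cor-contr-proj}) for the lifting axiom, the explicit factorisations $Mf$ and $Nf$ for (d), \cref{lem-flat} for the unit axiom, the identification $\cok(h)\cong\cok(f)\otimes\cok(g)$ together with \cref{prop-proj-closed-under-tensor} for the pushout-product axiom, and the five-lemma comparison of kernels/cokernels for properness. The only cosmetic differences are that you organise (b) by reducing $\acf$ and $\afb$ to intersections and verify the cokernel identification in (f) pointwise via the evaluation functors rather than via the paper's degreewise splitting, neither of which changes the substance.
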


\begin{proof}\leavevmode
 \begin{itemize}
  \item[(a)] Clear from the definition.
  \item[(b)] Let $\Arr(\Ch(\A{\cat{U}}))$ be the category of
   arrows in $\Ch(\A{\cat{U}})$.  Suppose that
   $f,g\in \Arr(\Ch(\A{\cat{U}}))$ and that $g$ is a retract of
   $f$.  This means that $F(g)$ is a retract of $F(f)$ for any functor
   $F\colon \Arr(\Ch(\A{\cat{U}})) \to\cat{C}$. This applies in particular to the
   functors
   $\ker,\cok\colon \Arr(\Ch(\A{\cat{U}})) \to\Ch(\A{\cat{U}})$
   and
   $H_*\colon \Arr(\Ch(\A{\cat{U}})) \to \Arr(\Gr(\A{\cat{U}}))$.
   Thus we obtain that the five classes of morphisms are
   closed under retracts, using for $\acf$ that the class of contractible complexes of projectives is closed under retracts.
   It is clear that $\we$ and $\fib$ are
   closed under composition, and thus that $\afb$ is also closed under
   composition.  Now suppose we have cofibrations
   $X\xrightarrow{f}Y\xrightarrow{g}Z$.  Standard theory of abelian
   categories provides a short exact sequence
   $\cok(f)\to\cok(gf)\to\cok(g)$ in $\Ch(\A{\cat{U}})$.  As $f$ and
   $g$ are cofibrations, the objects $\cok(f)$ and $\cok(g)$ are
   degreewise projective, so
   $\cok(gf)\cong \cok(f)\oplus \cok(g)\in\Gr(\A{\cat{U}})$, and it
   follows that $gf$ is also a cofibration.  We also see (as in
   \cref{rem-triangulation}) that the sequence
   $\cok(f)\to\cok(gf)\to\cok(g)$ gives a distinguished triangle in
   $\K{\A{\cat{U}}}$.  If $f$ and $g$ are acyclic
   cofibrations, then $\cok(f)\simeq\cok(g)\simeq 0$ in
   $\K{\A{\cat{U}}}$, so $\cok(gf)\simeq 0$ in
   $\K{\A{\cat{U}}}$, so $gf$ is again an acyclic cofibration.
  \item[(c)] Suppose that $f\in\cof$ and $g\in\afb$, so $f$ is a
   monomorphism with $\cok(f)\in\Ch(\P{\cat U})$ and $g$ is an epimorphism
   with $\ker(g)$ acyclic.  \Cref{cor-no-ext} tells us
   that $\Ext^1_{\Ch(\A{\cat U})}(\cok(f),\ker(g))=0$, so
   \cref{prop-lift-abelian} tells us that $f$ has the left
   lifting property against $g$.  Now suppose instead that
   $f\in\acf$ and $g\in\fib$.  This means that $\cok(f)$ is a
   contractible complex in $\Ch(\P{\cat{U}})$, so it is projective in
   $\Ch(\A{\cat{U}})$ by \cref{cor-contr-proj}.  This means that
   $\Ext^1_{\Ch(\A{\cat U})}(\cok(f),\ker(g))=0$, so
   \cref{prop-lift-abelian} again tells us that $f$ has the
   left lifting property against $g$.
  \item[(d)] This is \cref{prop-factor-M} and \cref{prop-factor-N}.
  \item[(e)] This is immediate from \cref{lem-flat}.  (Moreover, in
   most cases $\unit$ will itself be cofibrant.)
  \item[(f)] Suppose that $f\colon U\to V$ and $g\colon X\to Y$ are
   cofibrations.  This means that in $\Gr(\A{\cat{U}})$ we can identify $f$
   and $g$ with inclusions $U\to U\oplus  A$ and $X\to X\oplus  B$,
   where $A,B\in\Gr(\P{\cat{U}})$.  Therefore, the resulting map 
   $h\colon P\to V\otimes Y$ is just the inclusion 
   \[ U\otimes X\oplus  A\otimes X\oplus  U\otimes B \to 
      U\otimes X\oplus  A\otimes X\oplus  U\otimes B \oplus  A\otimes B.
   \]
   This means that $h$ is a cofibration, with
   $\cok(h)\cong \cok(f)\otimes\cok(g)$.  (Here we have used the fact that
   $\P{\cat{U}}$ is closed under tensor products, see \cref{prop-proj-closed-under-tensor}.)  If either of $f$ or $g$ are acyclic then either $\cok(f)$ or $\cok(g)$ is contractible, so $\cok(f)\otimes\cok(g)$
   is contractible so $h$ is an acyclic cofibration.
  \item[(g)] Let $\cat{B}$ be an abelian category.  Suppose we have a
   pullback square in $\Ch(\cat{B})$ as shown below, in which $q$ is an
   epimorphism and $f$ is a quasi-isomorphism.  
   \begin{center}
    \begin{tikzcd}
     U \arrow[r,"p"] \arrow[d,"g"'] & 
     X \arrow[d,"f"] \\
     V \arrow[r,twoheadrightarrow,"q"'] &
     Y.
    \end{tikzcd}
   \end{center}
   Standard theory of abelian categories shows that $p$ is also an
   epimorphism and that the induced map $\ker(p)\to\ker(q)$ is an
   isomorphism.  We therefore have an expanded diagram as follows in
   $\Ch(\cat{B})$, where both rows are short exact sequences:
   \begin{center}
    \begin{tikzcd}
     \ker(p) \arrow[d,"\simeq"'] \arrow[r,rightarrowtail] &
     U \arrow[r,twoheadrightarrow,"p"] \arrow[d,"g"'] & 
     X \arrow[d,"f"] \\
     \ker(q) \arrow[r,rightarrowtail] &
     V \arrow[r,twoheadrightarrow,"q"'] &
     Y.
    \end{tikzcd}
   \end{center}
   This gives rise to a ladder diagram of homology groups, in which
   each of the two rows is a long exact sequence.  As the map
   $\ker(p)\to\ker(q)$ is an isomorphism and $f$ is a
   quasi-isomorphism, two of every three vertical maps are
   isomorphisms.  It follows by the Five Lemma that $g$ is also a
   quasi-isomorphism.  In other words, the pullback of a
   quasi-isomorphism along an epimorphism is again a quasi-isomorphism.
   By applying this to the dual category, we see that the pushout of a
   quasi-isomorphism along a monomorphism is also a quasi-isomorphism.
   In $\Ch(\A{\cat{U}})$, the fibrations are precisely the epimorphisms,
   and the cofibrations are a subclass of the monomorphisms, so
   claim~(g) follows.\qedhere
 \end{itemize}
\end{proof}

\subsection{Cofibrant generation}

We now verify that the model structure of
\cref{thm-proj-model-structure} is cofibrantly generated. Recall the functor $\Delta$ from \cref{defn-Dl}.
\begin{Def}\label{defn-cof-gens}
 We write $\gcof$ for the class of maps in $\Ch(\A{\cat{U}})$
 \[
 \gcof=\{j\colon \Sigma^{n-1}e_G\to\Sigma^n\Delta(e_G) \mid n\in\Z, \; G\in\cat{U}\}.
 \]
  We also write $\gacf$ for the set of all inclusions
 \[
 \gacf=\{0\to\Delta(\Sigma^ne_G)\mid n\in\Z, \; G\in\cat{U} \}.
 \]
  We write $\cof'$ for the smallest class of maps
 in $\Ch(\A{\cat{U}})$ such that 
 \begin{itemize}
  \item[(a)] $\gcof\subseteq\cof'$;
  \item[(b)] $\cof'$ is closed under pushouts, retracts and composites;
  \item[(c)] for any sequence $X(0)\to X(1)\to X(2)\to\dotsb$ with
   colimit $X(\infty)$, if all the maps $X(i)\to X(i+1)$ lie in
   $\cof'$, then so does the map $X(0)\to X(\infty)$.
 \end{itemize}
 We define $\acf'$ in the same way starting with $\gacf$.
\end{Def}

\begin{Lem}\label{lem-cof-gen}
 $\cof'=\cof$ and $\acf'=\acf$.
\end{Lem}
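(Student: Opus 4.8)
The plan is to establish the four inclusions $\cof'\subseteq\cof$, $\acf'\subseteq\acf$, $\acf\subseteq\acf'$ and $\cof\subseteq\cof'$ in turn.

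\emph{The inclusions $\cof'\subseteq\cof$ and $\acf'\subseteq\acf$.} First I would check the generators lie in the target classes: $\cok(j\colon\Sigma^{n-1}e_G\to\Sigma^n\Delta(e_G))\cong\Sigma^n e_G$ is a projective object with zero differential, so $\gcof\subseteq\cof$, and $\cok(0\to\Delta(\Sigma^n e_G))=\Delta(\Sigma^n e_G)$ is a contractible complex of projectives by \cref{prop-im-Dl}, so $\gacf\subseteq\acf$. It then remains to verify that $\cof$ and $\acf$ are each closed under pushouts, retracts and (transfinite) composites. This is routine once one applies the cokernel functor: in a Grothendieck category a pushout of a monomorphism is again a monomorphism with the same cokernel, $\cok$ sends retracts to retracts and composites to extensions, $\P{\cat U}$ is closed under coproducts and extensions (\cref{prop-proj-closed-under-tensor}), and for $\acf$ one also uses that the projective objects of $\Ch(\A{\cat U})$ --- equivalently the contractible complexes of projectives, by \cref{cor-contr-proj} --- are closed under the same operations, together with \cref{cor-acyc-proj} to pass between acyclic and contractible complexes of projectives.

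\emph{The inclusion $\acf\subseteq\acf'$.} Here the point is that an acyclic cofibration splits off its cokernel. If $f\colon X\to Y$ lies in $\acf$ then $Q=\cok(f)$ is a contractible complex of projectives, hence a projective object of $\Ch(\A{\cat U})$ by \cref{cor-contr-proj}, so the short exact sequence $X\to Y\to Q$ splits and $f$ is the cobase change of $0\to Q$ along $0\to X$. Thus it suffices to show $0\to Q\in\acf'$ for every contractible complex of projectives $Q$. Writing $Q\cong\Delta(U)$ with $U\in\Gr(\P{\cat U})$ (\cref{prop-im-Dl}) and $U$ a retract of a coproduct of shifts $\Sigma^n e_G$ (using \cref{rem-projective-objects} and that each $e_{G,V}$ is a retract of a coproduct of copies of $e_G$), we see that $\Delta(U)$ is a retract of the corresponding coproduct $\bigoplus\Delta(\Sigma^n e_G)$, and $0\to\bigoplus\Delta(\Sigma^n e_G)$ is a transfinite composite of pushouts of maps in $\gacf$. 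Closure of $\acf'$ under pushouts and retracts then gives $f\in\acf'$.

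\emph{The inclusion $\cof\subseteq\cof'$.} This is the substantial case, and I would do it in one of two ways. The first is by the small object argument (legitimate since each $e_G$ is compact in $\A{\cat U}$, colimits there being computed pointwise, so each $\Sigma^{n-1}e_G$ is small in $\Ch(\A{\cat U})$): it factors $f$ as $X\to Z\to Y$ with $X\to Z$ a relative $\gcof$-cell complex, hence in $\cof'$, and $Z\to Y$ having the right lifting property against $\gcof$. One then checks via \cref{prop-lift-abelian}, \cref{cor-ext-Ch} and \cref{thm-cofibrant} that this lifting property forces $Z\to Y$ to be an acyclic fibration; since $f$ is a cofibration, \cref{thm-proj-model-structure}(c) produces a lift exhibiting $f$ as a retract of $X\to Z$, so $f\in\cof'$. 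Alternatively, one can argue by explicit cell attachment: given $f\colon X\to Y$ with $Q=\cok(f)$ a complex of projectives, pull back the filtration of \cref{cons-filtration} on $Q$ to set $Y_{\le s}=\pi^{-1}(L_{\le s}Q)$ for $\pi\colon Y\to Q$, so that $X=Y_{\le 0}$, $Y=\colim_s Y_{\le s}$, and each $Y_{\le s-1}\to Y_{\le s}$ is a cofibration with $s$-pure cokernel $L_sQ$; using the equivalence of $s$-pure objects with $\A{\cat U_{=s}}$ (\cref{defn-pure,prop-pure-split}) and \cref{lem-semisimple-complex}, split $L_sQ\cong Q'\oplus Q''$ with $Q'$ of zero differential and $Q''$ contractible, handle the $Q''$-layer by a pushout of $0\to Q''$ (which lies in $\cof'$ because $0\to\Delta(\Sigma^n e_G)$ factors as the pushout $0\to\Sigma^{n-1}e_G$ of a $\gcof$-map followed by the $\gcof$-map $\Sigma^{n-1}e_G\to\Sigma^n\Delta(e_G)$), and handle the $Q'$-layer by noting that, $Q'$ having zero differential, the boundaries of lifted generators automatically land in the subcomplex, so --- after passing to a free cover to reduce from projective to free --- it is a coproduct of pushouts of $\gcof$-maps. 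Either way $\cof'$ is closed under transfinite composites, so $f\in\cof'$.

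\emph{Main obstacle.} The hard part is $\cof\subseteq\cof'$: in contrast with the acyclic case, the cokernel of a cofibration is a complex of projectives but not in general a projective object of $\Ch(\A{\cat U})$, so it does not split off, and one is forced either into the transfinite cell-attachment bookkeeping through the $L_{\le s}$-filtration (carefully separating the zero-differential and contractible parts of each pure layer and passing to free covers) or into the small object argument together with the identification $\mathrm{RLP}(\gcof)=\afb$.
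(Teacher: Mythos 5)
Your proposal is correct, and for three of the four inclusions (and for the second of your two routes to $\cof\subseteq\cof'$) it follows the paper's proof essentially verbatim: the closure checks via $\cok$, the splitting-off of the contractible projective cokernel for $\acf\subseteq\acf'$, and the cell-attachment through the $L_{\leq s}$-filtration with each pure layer split into a zero-differential part (attached by a pushout of $j\colon\Sigma^{-1}U\to\Delta(U)$ along the map recording the differential) and a contractible part (attached by a pushout of $0\to\Delta(V)$). Your first alternative for $\cof\subseteq\cof'$ — the small object argument plus the retract argument — is a genuinely different route and does work, but one caution: you must prove $\mathrm{RLP}(\gcof)\subseteq\afb$ \emph{independently} of \cref{lem-cof-gen}, since the paper's own proof of that identification in \cref{prop-cof-gen} invokes this lemma to deduce surjectivity. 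That independence is available by a direct computation (lifting against $\Sigma^{n-1}e_G\to\Sigma^n\Delta(e_G)$ first gives surjectivity of $p$ on cycles, hence on all elements, and then acyclicity of $\ker(p)$), rather than via \cref{prop-lift-abelian} and \cref{thm-cofibrant}, which are the tools for the opposite lifting property. The trade-off is that the explicit route yields concrete cell structures adapted to the group-order filtration, while the small object route is shorter but leans on the factorisation machinery and on closure of $\cof'$ under the (possibly transfinite) composites and coproducts that the cell complexes produce.
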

\begin{proof}
 It is easy to see that $\cof$ has the closure properties required for
 $\cof'$, so $\cof'\subseteq\cof$.  If we think of $\acf$ as $\we\cap\cof$,
 then it is also easy to see that $\acf$ has the closure properties
 required for $\acf'$, so $\acf'\subseteq\acf$.  

 Next, let $X$ be any object of $\Gr(\P{\cat{U}})$ (so $X\in\Ch(\P{\cat{U}})$
 with $d=0$). Then $X$ is a retract of a direct sum
 of terms like $\Sigma^le_G$ (see \cref{rem-projective-objects}), so the map $j\colon \Sigma^{-1}X\to\Delta(X)$ lies in
 $\cof'$ and the map $0\to\Delta(X)$ lies in $\acf'$.  The map $0\to X$
 is the pushout of $j$ along the map $\Sigma^{-1}X\to 0$ and so lies in
 $\cof'$.  The map $0\to\Delta(X)$ is the composite
 $0\to\Sigma^{-1}X\xrightarrow{j}\Delta(X)$ and so also lies in $\cof'$.  This
 proves that $\gacf\subseteq\cof'$, and it follows that $\acf'\subseteq\cof'$.

 Now consider an arbitrary acyclic cofibration $f\colon X\to Y$.  This is a
 monomorphism whose cokernel $Z$ is an acyclic complex of projectives.
 This is contractible and projective in $\Ch(\A{\cat{U}})$ by
 \cref{cor-acyc-proj}, so $f$ is just the inclusion 
 $X\to X\oplus Z$.  Also, \cref{prop-im-Dl} tells us that
 $Z\cong \Delta(V)$ for some $V\in\Gr(\P{\cat{U}})$, so $f$ is just the
 pushout of $0\to\Delta(V)$ along $0\to X$, so $f\in\acf'$.  This proves
 that $\acf'=\acf$.

 Now consider instead an arbitrary cofibration $f\colon X\to Y$, so $f$ is
 a monomorphism whose cofibre $Z$ lies in $\Ch(\P{\cat{U}})$; we must show that
 $f\in\cof'$.  Let $X(s)$ be the preimage in $Y$ of
 $L_{\leq s}Z\leq Z$, so $X\cong X(0)\leq X(1)\leq\dotsb \leq Y$
 and $Y$ is the colimit of the $X(s)$.  It will therefore suffice to
 check that the map $X(s-1)\to X(s)$ lies in $\cof'$ for all $s>0$.
 This map is clearly a monomorphism with cokernel $L_sZ$, which is
 $s$-pure.  Using \cref{lem-semisimple-complex}
 and \cref{prop-im-Dl}, we can split $L_sZ$ as $U\oplus \Delta(V)$, where
 $U$ has zero differential.  Here $U$ is projective in
 $\Gr(\A{\cat{U}})$ and $\Delta(V)$ is projective in $\Ch(\A{\cat{U}})$ so we can
 choose lifts giving a splitting $X(s)\cong X(s-1)\oplus  U\oplus \Delta(V)$
 where $\Delta(V)$ is a subcomplex and $d(U_{n+1})\leq X(s-1)_n$.  The
 differential can be thought of as giving a chain map
 $u\colon \Sigma^{-1}U\to X(s-1)$.  The inclusion $X(s-1)\to X(s)$ is the
 composite of the inclusions
 $X(s-1)\to X(s-1)\oplus  U\to X(s-1)\oplus  U\oplus \Delta(V)$.  The first
 of these is the pushout of $j\colon \Sigma^{-1}U\to\Delta(U)$ along $u$, so it
 lies in $\cof'$.  The second is the pushout of $0\to\Delta(V)$ along
 $0\to X(s-1)\oplus  U$, so it lies in $\acf'\subseteq\cof'$.  Thus, the map
 $X(s-1)\to X(s)$ lies in $\cof'$ as required.
\end{proof}

\begin{Prop}\label{prop-cof-gen}
 The model structure on $\Ch(\A{\cat{U}})$ is cofibrantly generated, with
 $\gcof$ as a generating set of cofibrations, and $\gacf$ as a
 generating set of acyclic cofibrations.
\end{Prop}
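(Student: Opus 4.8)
The plan is to lean on \cref{lem-cof-gen}, which already identifies the saturations of $\gcof$ and $\gacf$ with the cofibrations and the acyclic cofibrations; since the model structure itself is in place by \cref{thm-proj-model-structure}, it then remains only to check that $\gcof$ and $\gacf$ are small enough for the small object argument to apply and to match the resulting classes of maps.

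First I would verify the smallness hypothesis. Colimits in $\A{\cat{U}}$ are computed pointwise (\cref{rec-pointwise}), so $\Hom_{\A{\cat{U}}}(e_G,-)\cong\ev_G$ preserves filtered colimits and each $e_G$ is a compact object of $\A{\cat{U}}$. Hence the domains $\Sigma^{n-1}e_G$ and $0$ of the maps in $\gcof$ and $\gacf$, together with their codomains $\Sigma^n\Delta(e_G)$ and $\Delta(\Sigma^ne_G)$ — finite chain complexes built from the $e_G$ — are compact, and in particular small, objects of the bicomplete category $\Ch(\A{\cat{U}})$. The small object argument (run along $\omega$-indexed sequences) therefore applies and yields weak factorization systems $(\gcof\text{-cof},\gcof\text{-inj})$ and $(\gacf\text{-cof},\gacf\text{-inj})$, where $S\text{-inj}$ denotes the class of maps with the right lifting property against $S$ and $S\text{-cof}$ the retract closure of the class of transfinite composites of pushouts of maps in $S$.

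Next I would identify these classes with the (acyclic) cofibrations. By construction $\gcof\text{-cof}$ is the smallest class of maps containing $\gcof$ and closed under pushouts, retracts and composites of $\omega$-sequences, which is exactly the class $\cof'$ of \cref{defn-cof-gens}; similarly $\gacf\text{-cof}=\acf'$. Now \cref{lem-cof-gen} gives $\cof'=\cof$ and $\acf'=\acf$. Passing to injective classes, $\gcof\text{-inj}=\cof\text{-inj}$ is the class of maps with the right lifting property against all cofibrations, which in any model category is precisely the class of acyclic fibrations, and $\gacf\text{-inj}=\acf\text{-inj}$ is the class of maps with the right lifting property against all acyclic cofibrations, i.e. the fibrations. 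Combined with \cref{thm-proj-model-structure}, this is exactly the assertion that $\Ch(\A{\cat{U}})$ is cofibrantly generated with $\gcof$ a generating set of cofibrations and $\gacf$ a generating set of acyclic cofibrations.

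There is no genuinely hard step here: all the substance sits in \cref{lem-cof-gen}. The only point that deserves a little care is the smallness needed to license the small object argument, which I would obtain cheaply from the compactness of the generators $e_G$ rather than by invoking local presentability of $\Ch(\A{\cat{U}})$.
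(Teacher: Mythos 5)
Your proof is correct, and it leans on the same two pillars as the paper's: the smallness of the domains and the identification $\cof'=\cof$, $\acf'=\acf$ from \cref{lem-cof-gen}. Where you diverge is in how the right-lifting classes are matched to the (acyclic) fibrations. The paper verifies Hovey's conditions directly: lifting against $0\to\Sigma^n\Delta(e_G)$ is unwound to surjectivity of $X_n(G)\to Y_n(G)$, and lifting against $\Sigma^{n-1}e_G\to\Sigma^n\Delta(e_G)$ applied to $\ker(p)\to 0$ is unwound to ``every $(n-1)$-cycle of $\ker(p)(G)$ is a boundary''. You instead argue formally: RLP against $\gcof$ propagates to RLP against the saturation, which contains $\cof'=\cof$ by \cref{lem-cof-gen}, and then the retract argument (using the factorizations and retract-closure from \cref{thm-proj-model-structure}) identifies $\cof$-injectives with acyclic fibrations; similarly for $\gacf$. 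Both are valid; yours is shorter and purely formal, the paper's is more explicit and in the spirit of its stated aim of self-containedness, and the paper's direct computation for fibrations does not even need \cref{lem-cof-gen}. Two small points of care in your write-up: the identification of the $\gcof$-cofibrations with $\cof'$ glosses over the mismatch between closure under $\omega$-sequences (as in \cref{defn-cof-gens}) and closure under arbitrary transfinite composition and coproducts (as in the small object argument) --- harmless here, since RLP on the right is stable under all of these operations on the left, which is all you actually use; and the smallness of $\Sigma^{n-1}e_G$ in $\Ch(\A{\cat{U}})$ is corepresented by the cycle functor $Z_{n-1}(-(G))$ rather than by evaluation, so one should note that taking cycles commutes with filtered colimits (which holds by exactness of filtered colimits).
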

\begin{proof}
 Translating~\cite[Definition 2.1.17]{Hovey} into our notation, it
 will be enough to prove the following:
 \begin{itemize}
  \item[(a)] For all $n$ and $G$, the functor
   $\Hom_{\Ch(\A{\cat{U}})}(\Sigma^{n-1}e_G,-)$ preserves all filtered colimits
   (i.e., $\Sigma^{n-1}e_G$ is small).
  \item[(b)] The fibrations are precisely the maps that have the right
   lifting property against $\gacf$.
  \item[(c)] The acyclic fibrations are precisely the maps that have
   the right lifting property against $\gcof$.
 \end{itemize}
 Claim~(a) is clear from the fact that
 $\Hom_{\Ch(\A{\cat{U}})}(\Sigma^{n-1}e_G,U)\cong Z_{n-1}(U(G))$.  For~(b), a map $p\colon X\to Y$ has right lifting against $0\to\Sigma^n\Delta(e_G)$ if and only if the map $X_n(G)\to Y_n(G)$ is surjective, and thus $p$ has right lifting against all of $\gacf$ if and only if it is an epimorphism.

 We now consider~(c).  Let $p\colon X\to Y$ be a morphism in $\Ch(\A{\cat{U}})$.
 If $p$ is an acyclic fibration, we showed already in
 \cref{thm-proj-model-structure} that $p$ has right lifting against
 all of $\cof$ and in particular against $\gcof\subset\cof$.
 
 Conversely,
 suppose that $p$ has right lifting against $\gcof$. By \cref{lem-cof-gen}, $p$ also has right lifting against
 $\gacf$ and so is an epimorphism by part (b). The map $\ker(p)\to 0$ is the pullback of $p$ along $0\to Y$ and so has right lifting against $\gcof$. Right lifting against $\Sigma^{n-1}e_G\to\Sigma^n\Delta(e_G)$ translates directly to the statement that every $(n-1)$-cycle in $\ker(p)(G)$ is a boundary, so $\ker(p)$ is acyclic.
\end{proof}

\bibliographystyle{alpha}
\bibliography{reference}

\end{document}